\numberwithin{equation}{section}
\newtheorem{thm}{Theorem}[section]
\newtheorem{prop}[thm]{Proposition}
\newtheorem{lem}[thm]{Lemma}
\newtheorem{rem}[thm]{Remark}
\newtheorem{cor}[thm]{Corollary}
\newcommand{\nn}{\nonumber}
\newcommand{\ket}[1]{{| #1 \rangle}}      
\newcommand{\br}[1]{{\langle #1 \rangle}}  
\newcommand{\ds}[1]{\displaystyle #1}
\newcommand{\C}{{\mathbb C}}
\newcommand{\Z}{{\mathbb Z}}
\newcommand{\Q}{{\mathbb Q}}
\newcommand{\R}{{\mathbb R}}
\newcommand{\cO}{\mathcal{O}}
\newcommand{\cR}{\mathcal{R}}
\newcommand{\cT}{\mathcal{T}}
\newcommand{\Rb}{\overline{R}}
\newcommand{\bs}{\boldsymbol}
\newcommand{\bm}{{\mathfrak{m}}}
\newcommand{\bn}{{\mathfrak{n}}}
\newcommand{\gl}{\mathfrak{gl}}
\newcommand{\slt}{\mathfrak{sl}_2}
\newcommand{\slth}{\widehat{\mathfrak{sl}}_2}
\newcommand{\Tb}{\mathfrak{T}}
\newcommand{\la}{\lambda}
\newcommand{\sfq}{{\sf q}}
\newcommand{\End}{\mathop{\rm End}}
\newcommand{\Hom}{\mathop{\rm Hom}}
\newcommand{\id}{{\rm id}}
\newcommand{\Ob}{\mathrm{Ob}\,}
\newcommand{\Rep}{\mathrm{Rep}}
\newcommand{\res}{{\rm res}}
\newcommand{\Tr}{{\rm Tr}}
\newcommand{\wt}{{\rm wt}\,}
\newcommand{\gge}{\geqslant}
\newcommand{\on}{\operatorname}
\newcommand{\mc}{\mathcal}
\newcommand{\al}{\alpha}
\newcommand{\qbin}[2]{{\left[
\begin{matrix}{\,\displaystyle #1\,}\\
{\,\displaystyle #2\,}\end{matrix}
\right]
}}
\newcommand{\hdeg}{\mathop{\mathrm{hdeg}}}
\newcommand{\g}{\mathfrak{g}}
\newcommand{\gb}{\accentset{\circ}{\mathfrak{g}}}
\newcommand{\bo}{\mathfrak{b}}
\newcommand{\bbo}{\overline{\mathfrak{b}}}
\newcommand{\tb}{\mathbf{\mathfrak{t}}}
\newcommand{\U}{U}
\newcommand{\Psib}{\mbox{\boldmath$\Psi$}}
\newcommand{\Psibs}{\scalebox{.7}{\boldmath$\Psi$}}
\newcommand{\Fin}{\cO^{fin}_{\bo}}
\newcommand{\mfr}{\mathfrak{r}}
\newcommand{\supp}{\mathrm{supp}}
\definecolor{4.2}{rgb}{0,0,0}
\definecolor{zh}{rgb}{0.5,0.7,0.0}
\begin{document}

\begin{title}[Finite type modules and Bethe ansatz equations]
{Finite type modules and Bethe ansatz equations}
\end{title}
\author{B. Feigin, M. Jimbo, T. Miwa, and E. Mukhin}
\address{BF: National Research University Higher School of Economics, Russian Federation, International Laboratory of Representation Theory 
and \newline Mathematical Physics, Russia, Moscow,  101000,  Myasnitskaya ul., 20 and Landau Institute for Theoretical Physics,
Russia, Chernogolovka, 142432, pr.Akademika Semenova, 1a
}
\email{bfeigin@gmail.com}
\address{MJ: Department of Mathematics,
Rikkyo University, Toshima-ku, Tokyo 171-8501, Japan}
\email{jimbomm@rikkyo.ac.jp}
\address{TM: Institute for Liberal Arts and Sciences,
Kyoto University, Kyoto 606-8316,
Japan}\email{tmiwa@kje.biglobe.ne.jp}
\address{EM: Department of Mathematics,
Indiana University-Purdue University-Indianapolis,
402 N.Blackford St., LD 270,
Indianapolis, IN 46202, USA}\email{mukhin@math.iupui.edu}

\begin{abstract} 
We introduce and study a category $\Fin$ of modules  of the Borel subalgebra $U_q\bo$ of a quantum affine algebra $U_q\g$, 
where the commutative algebra
of Drinfeld generators $h_{i,r}$, corresponding to Cartan currents, has finitely many characteristic values. 
This category is a natural extension of the category of finite-dimensional  $U_q\g$ modules. In particular, 
we classify the irreducible objects, discuss their properties,
and describe the combinatorics of the $q$-characters. 
We study transfer matrices corresponding to modules in $\Fin$. Among them we find 
the Baxter $Q_i$ operators and  $T_i$ operators satisfying relations of the form $T_iQ_i=\prod_j Q_j+ \prod_k Q_k$. We show that 
these operators are polynomials of the spectral parameter after a suitable normalization.
This allows us to prove the Bethe ansatz equations for the zeroes of the eigenvalues of the $Q_i$ operators acting 
in an arbitrary finite-dimensional representation of $U_q\g$.
\end{abstract}
\date{\today}
\maketitle 

\section{Introduction}
The XXZ models are the celebrated integrable models whose Hamiltonians originate in quantum affine algebras.  

Let $U_q\g$ be a quantum affine algebra. The universal $R$ matrix is a special element $\mc R$ of the completion  
$U_q\g\widehat\otimes U_q\g$ which intertwines the standard comultiplication in $U_q\g$ with the opposite one. 
Given a $U_q\g$ module $V$, the transfer matrix is the trace $T_V=\on{Tr}_V(\pi_V\otimes \on{id})(\mc R)$. 
Due to the properties of the $R$ matrix (the Yang-Baxter equation), transfer matrices for various representations 
commute: $T_VT_W=T_WT_V$, and therefore give rise to a family of commuting operators in a completion of $U_q\g$. 
These operators act on any finite-dimensional $U_q\g$ module. They are called the XXZ Hamiltonians.

The XXZ models received a lot of attention after the pioneering work \cite{Ba} where the $U_q\mathfrak{sl}_2$ 
transfer matrix was studied in relation to the 6-vertex lattice model. The spectrum of the model is usually 
obtained via various forms of the Bethe ansatz. In the standard approach of algebraic or coordinate Bethe ansatz, 
one writes a certain vector, called wave function, depending on parameters. When the parameters satisfy a system 
of algebraic equations, called Bethe ansatz equations, the wave function becomes an eigenvector of the Hamiltonians. 
 \medskip

In this paper we describe the spectrum of the XXZ Hamiltonians using the so-called analytic Bethe ansatz. 
In particular, using this method, we deduce the Bethe ansatz equations from the absence of poles for a 
certain operator, as anticipated in \cite{R}, \cite{FR}, \cite{FH}. 

The crucial observation is that the $R$ matrix  is not only an element of the completed tensor square of 
$U_q\g$ but an element of $U_q\bo\widehat\otimes U_q\bbo$, where 
$U_q\bo$ and $U_q\bbo$ are the Borel subalgebras of $U_q\g$.
Therefore, in addition to transfer matrices $T_V$ where $V$ is a $U_q\g$ module, 
one can consider transfer matrices $T_V$ where $V$ is a $U_q\bo$ module. The algebra $U_q\bo$ has a significantly 
richer representation theory than that of $U_q\g$ and this additional freedom allows us to solve the problem.
\medskip

First, we develop the representation theory of $U_q\bo$.

In the category $\mc O_{\bo}$ of highest $\ell$-weight $U_q\bo$ modules introduced in \cite{HJ}, 
we define a subcategory $\Fin$ of modules of finite type. By definition, a module is of finite type 
if the joint spectrum of 
the Cartan generators $h_{i,r}$ (in the
Drinfeld realization of $U_q\g$) is finite. Here $i\in I$, 
 $I$ is the set of labels 
of simple roots for the finite-dimensional algebra corresponding to $\g$, and $r>0$. 

The positive fundamental modules  $M^+_{i,a}$, $i\in I$, $a\in\C^\times$, are irreducible modules 
with simplest possible highest $\ell$-weight given by 
$\Psi_j(z)=(a^{-1/2}-a^{1/2}z)\delta_{i,j}+(1-\delta_{i,j})$, 
$j\in I$. They first appeared in 
\cite{BLZ2} in the case $U_q\widehat{\mathfrak{sl}}_2$,  
and then were studied 
in \cite{BHK}, \cite{BJMST}, \cite{HJ}, \cite{BFLMS}, \cite{FH}. 
By \cite{FH}, the positive fundamental modules are of finite type. 
We prove that the category $\Fin$ is topologically
generated by finite-dimensional  
modules (which are essentially restrictions of $U_q\g$ modules) 
and the positive fundamental modules. 

The category $\Fin$ naturally extends the category of finite-dimensional $U_q\g$ modules and has 
similar properties. In particular, the theory of $q$-characters provides a powerful tool for the study. 
We show that the Drinfeld coproduct can be used to construct non-trivial examples of modules of finite 
type and describe various properties of such modules. 

\medskip

The model example for the analytic Bethe ansatz is the work  \cite{Ba}, where the  $U_q\widehat{\mathfrak{sl}}_2$ 
transfer matrix was studied in relation to the 6-vertex lattice model. It is based on the existence of the operators 
$T(a)$ and $Q(a)$, acting in any $U_q\widehat{\mathfrak{sl}}_2$ module $W$, which satisfy the famous Baxter's $TQ$ relation:
\begin{align}\label{TQ}
T(a)Q(a)=A(a)Q(aq^{-2})+D(a)Q(aq^2)\,.
\end{align}
Here $A(a)$ and $D(a)$ are explicit scalar 
functions depending on $W$. Since $Q(a)$ is known to be polynomial and $T(a)$ regular, one obtains the equations for the 
zeroes of $Q(a)$. These are the Bethe ansatz equations.

The $T$ operator was long known to be the suitably normalized transfer matrix corresponding to the $2$-dimensional 
irreducible evaluation $U_q\widehat{\mathfrak{sl}}_2$ module $V(a)$, $T(a)=\bar T_{V(a)}$.  
Recently, the Baxter's operator $Q(a)$ was identified with a suitably normalized transfer matrix as well. 
This transfer matrix turns out to be related to the positive fundamental module 
(also called the $q$ oscillator representation \cite{BLZ2}).  
Then the relation \eqref{TQ} 
is interpreted as an equation 
in the Grothendieck ring of $U_q\bo$ modules (see e.g. \cite{JMS}):
\begin{align}\label{2 term sl2}
[V_{aq^{-1}}][M_{a}^+]=[M^+_{aq^{-2}}]+[M^+_{aq^2}]\,.
\end{align}
The functions $A(a)$ and $D(a)$ 
appear from normalization (depending on $W$)
of the transfer matrices $T_{V_a}$ and $T_{M^+_{a}}$.

\medskip

For the general quantum algebra, one has the operators $Q_i(a)$, $i\in I$, which are normalized transfer matrices 
related to representations $M^+_{i,a}$. They are shown to be polynomial in \cite{FH}. But there are no 2-dimensional 
modules to complete the argument. In this article
we find an appropriate substitute in the category $\Fin$.

More precisely, we construct modules $N_{i,a}^+$ 
($i\in I$, $a\in\C^\times$) 
which are infinite-dimensional but 2-finite. 
It means that, 
similarly to 
a 2-dimensional $U_q\widehat{\mathfrak{sl}}_2$ module, the algebra  generated $h_{j,r}$, $r>0$, $j\in I$, has only 
two different eigenvalues in $N_{i,a}$. Then we show that
the modules $N_{i,a}^+$ and $M_{i,a}^+$ satisfy 
a two term relation similar to \eqref{2 term sl2}:
\begin{align}\label{NM}
[N_{i,a}^+][M^+_{i,a}]=
\prod\limits_{j: C_{j,i}\neq 0} [M^+_{j,aq_j^{-C_{j,i}}}]
+\prod\limits_{j: C_{j,i}\neq 0} [M^+_{j,aq_j^{C_{j,i}}}]
\,,
\end{align}
where $(C_{i,j})_{i,j\in I}$ is the Cartan matrix. 

For example, in the case of $U_q\widehat{\mathfrak{sl}}_3$, 
consider two irreducible $U_q\bo$ modules with highest $\ell$-weights 
\begin{align*}
\Bigl(q\frac{1-aq^{-2}z}{1-az}\ ,\ 1\Bigr), \quad {\rm{and}} \quad 
\Bigl(q\frac{1-aq^{-2}z}{1-az}\ ,\ (aq)^{-1/2}(1-aqz)\Bigr),
\end{align*}
respectively. The first one is the three dimensional evaluation module. It is a restriction of a $U_q\widehat{\mathfrak{sl}}_3$ module.
The second module is not a restriction of any $U_q\widehat{\mathfrak{sl}}_3$ module. It is an infinite dimensional but $2$-finite module. 
We denote it $N_{1,a}^+$. Then equation \eqref{NM} becomes:
\begin{align*}
[N^+_{1,a}][M_{1,a}^+]=[M^+_{1,aq^{-2}}][M^+_{2,aq}]
+[M^+_{1,aq^2}][M^+_{2,aq^{-1}}]\,.
\end{align*}

The two term relation \eqref{NM} immediately leads to the relation for 
transfer matrices \eqref{TQ-rel} similar to \eqref{TQ}.
We show that transfer matrices related to $U_q\bo$ 
modules $M_{i,a}^+$ and $N_{i,a}^+$ are polynomial 
on each vector after suitable explicit 
normalization (see Proposition \ref{Q pol} and Proposition \ref{T pol}).

Ultimately, it gives a 
proof of the Bethe ansatz equations \eqref{BAE}.

In particular, it implies that the spectrum of a transfer matrix 
related to 
$U_q\bo$ module of category $\cO_\bo$
can be explicitly described in terms of solutions of the Bethe ansatz equation, see Theorem \ref{T-eigv}.
For transfer matrices related to the finite-dimensional modules it was conjectured in \cite{FR}.

The same method works also for quantum toroidal algebras. The simplest case 
of type $\gl_1$ has been studied in our paper \cite{FJMM2}.
\medskip

The paper is constructed as follows. 
We set up the notation, and remind 
the standard facts about the algebras in Section \ref{sec:prelim}, 
and their representations in Section \ref{catO}. 
In Section \ref{q-char sec} we discuss the $q$-characters in the context of Borel subalgebras. 
Then in Section \ref{MV sub} we prove a number of facts about $U_q\bo$ modules which we use later. 
The main results of Section \ref{MV sub} are Lemma \ref{submodules} and Proposition \ref{prop:gradingM}. 
In Section \ref{finite sec} we introduce and study the category of finite type modules. 
In Section \ref{transfer sec} we define the XXZ Hamiltonians, show polynomiality of transfer matrices
and deduce the Bethe ansatz equations.

While preparing this paper, there appeared a work \cite{FH2}
where closely related functional relations for $Q$ operators are derived. 
However the polynomial property which is 
essential for deriving Bethe ansatz equations
is not discussed there. 

\section{Preliminaries}\label{sec:prelim}
In this section we collect basic definitions concerning
quantum loop algebras and their Borel subalgebras.
We follow closely the notation of \cite{FH}. 

\subsection{Notation}\label{notation}

Let $C=(C_{i,j})_{0\leq i,j\leq n}$ be an indecomposable Cartan matrix 
of non-twisted affine type. 
We denote by $\g$ the Kac-Moody Lie algebra associated with $C$. 
Set $I=\{1,\ldots, n\}$, and 
denote by $\gb$  the finite-dimensional 
simple Lie algebra associated with the Cartan matrix $(C_{i,j})_{i,j\in I}$. 
Let $\{\alpha_i\}_{i\in I}$, $\{\alpha_i^\vee\}_{i\in I}$,
$\{\omega_i\}_{i\in I}$, $\{\omega_i^\vee\}_{i\in I}$  
be the simple roots, the simple coroots, the fundamental weights 
and the fundamental coweights of $\gb$, respectively.
We set $Q=\oplus_{i\in I}\Z\,\alpha_i$, 
$Q^+=\oplus_{i\in I}\Z_{\ge0}\,\alpha_i$,
$P=\oplus_{i\in I}\Z\,\omega_i$. 
We denote by $\Delta$ the set of all roots
and by $\Delta^+$ the set of all positive roots of $\gb$. 
Let $D=\mathrm{diag}(d_0,\ldots, d_n)$ be the unique diagonal matrix such that 
$B=DC$ is symmetric and that 
 $d_i$'s are relatively prime positive integers.
We denote by $(~,~):P\times P\to\Q$ 
the invariant symmetric bilinear 
form such that $(\alpha_i,\alpha_i)=2d_i$. 
Let $a_0,\dots,a_n$ stand for the Kac label (\cite{Kac}, pp.55-56).
We have $a_0=1$. 

Throughout this paper, we fix  a non-zero complex number 
$q$ which is not a root of unity. 
We fix $\hbar\in\C$ such that $q=e^\hbar$, and 
write $q^\lambda=e^{\lambda\hbar}$ for $\lambda\in\C$. 
We set $q_i=q^{d_i}$, $q_{i,j}=q^{(\alpha_i,\alpha_j)}=q_{j,i}$.
We use the standard symbols for $q$-integers
\begin{align*}
[m]_v=\frac{v^m-v^{-m}}{v-v^{-1}}\,, \quad
[m]_v!=\prod_{k=1}^m [k]_v\,,
 \quad 
\qbin{m}{k}_v
=\frac{[m]_v!}{[k]_v![m-k]_v!}\,. 
\end{align*}

\subsection{Quantum loop algebra}\label{debut}
The quantum loop algebra $U_q\g$ is the $\C$-algebra defined by generators 
$e_i,\ f_i,\ k_i^{\pm1}$ ($0\le i\le n$) 
and the following relations for $0\le i,j\le n$.
\begin{align*}
&k_ik_j=k_jk_i,\quad k_0^{a_0}k_1^{a_1}\cdots k_n^{a_n}=1,\\
&k_ie_jk_i^{-1}=q_{i,j}e_j,\quad k_if_jk_i^{-1}=q_{i,j}^{-1}f_j,
\\
&[e_i,f_j]
=\delta_{i,j}\frac{k_i-k_i^{-1}}{q_i-q_i^{-1}},
\\
&\sum_{k=0}^{\ell_{i,j}}(-1)^k x_i^{(\ell_{i,j}-k)}x_j x_i^{(k)}=0\quad (i\neq j,\ x=e,f).
\end{align*}
In the last relation, we set $\ell_{i,j}=1-C_{i,j}$ and 
$x_i^{(k)}=x_i^k/[k]_{q_i}!$ for $x_i=e_i,f_i$.

The algebra $U_q\g$ can also be presented in terms
of the Drinfeld generators \cite{Dr,Be}
\begin{align*}
x_{i,m}^{\pm}\ (i\in I, m\in\Z), 
\quad 
h_{i,r} \ (i\in I, r\in \Z\backslash\{0\}),
\quad k^{\pm1}_i\ (i\in I)\,.
\end{align*}
We shall use the generating series 
\begin{align*}
&x^\pm_i(z)=\sum_{m\in\Z}x^{\pm}_{i,m}z^{m}\,,\\
&\phi^\pm_i(z)=
\sum_{m\in\Z}\phi^{\pm}_{i,m}z^{m}=
k_i^{\pm1}\exp\Bigl(
\pm(q_i-q_i^{-1})\sum_{\pm r>0}h_{i, r}z^{r}\Bigr)\,.
\end{align*}
Then for all $i,j\in I$ and $r,s\in\Z\backslash\{0\}$ we have
\begin{align}
&\phi_i^\epsilon(z)\phi_j^{\epsilon'}(w)
=\phi_j^{\epsilon'}(w)\phi_i^\epsilon(z)
\quad (\epsilon,\epsilon'\in\{+,-\})\,,
\label{phi-phi}\\
&(q_{i,j}^{\pm 1}z-w)
\phi^\epsilon_i(z)x_j^\pm(w)
=(z-q_{i,j}^{\pm 1}w)
x_j^\pm(w)\phi^\epsilon_i(z)
\quad (\epsilon\in\{+,-\}),
\label{phi-x}\\
&
[x_i^+(z),x_j^-(w)]
=\delta_{i,j}\delta(z/w)
\frac{\phi^+_i(z)-\phi^-_i(z)}{q_i-q_i^{-1}}\,,
\label{x+x-}\\
&
\bigl(q_{i,j}^{\pm 1} z-w\bigr)
x_i^{\pm}(z) x_j^{\pm}(w)
=
\bigl(z-q_{i,j}^{\pm 1} w\bigr)
x_j^{\pm}(w)x_i^{\pm}(z) \,,
\label{x-x}\\
&
\mathop{\mathrm{Sym}}_{z_1,\dots,z_{\ell_{i,j}}}
\sum_{k=0}^{\ell_{i,j}}
(-1)^k
\qbin{\ell_{i,j}}{k}_{q_i}
x_i^{\pm}(z_1)\cdots 
x_i^{\pm}(z_k)
x_j^{\pm}(w)
x_i^{\pm}(z_{k+1})\cdots x_i^{\pm}(z_{\ell_{i,j}})=0\,\quad (i\neq j). 
\label{x-Serre}
\end{align}
In the last line we set 
\begin{align*}
\mathop{\mathrm{Sym}}_{z_1,\dots,z_\ell}f(z_1,\dots,z_\ell)=
\frac{1}{\ell!}
\sum_{\sigma\in S_\ell} f\bigl(z_{\sigma(1)},\dots,z_{\sigma(\ell)}\bigr)\,.
\end{align*}

We have in particular
\begin{align}\label{hx}
&k_ix^{\pm}_j(z)k_i^{-1}=q_{i,j}^{\pm 1}x^\pm_j(z)\,,\notag
\\
&[h_{i,r}, x^{\pm}_j(z)]= \pm\frac{[r C_{i,j}]_{q_i}}{r} z^{-r} x^\pm_j(z)\,.
\end{align}

Let $U^\pm_q\g$ 
be the subalgebra of $U_q\g$ generated by the 
$x_{i,m}^\pm$ ($i\in I, m\in\Z$), 
and let $U^0_q\g$ be the one generated 
by the $k_i^{\pm 1}$, $h_{i,r}$ ($i\in I,r\in\Z\backslash\{0\}$). 
We have a triangular decomposition
\begin{align}
U_q\g\simeq 
U_q^-\g\otimes U^0_q\g \otimes U^+_q\g\,.
\label{triangular-g} 
\end{align}
We denote by $\tb$ the subalgebra of $U_q\g$ 
generated by the $k_i^{\pm1}$ $(i\in I)$.

The algebra $U_q\g$ has a $Q\times \Z$-grading given by
\begin{align*}
 &\deg e_i=(\alpha_i,0),\quad \deg f_i=(-\alpha_i,0),\quad 
\deg k_i=(0,0)\,\quad (i\in I),\\
&\deg e_0=(\accentset{\circ}{\alpha}_0,1),\quad
\deg f_0=(-\accentset{\circ}{\alpha}_0,-1),
\quad \deg k_0=(0,0)\,,
\end{align*}
where $\accentset{\circ}{\alpha}_0=-\sum_{i\in I}a_i\alpha_i$. 
We have $\deg x^\pm_{i,m}=(\pm\alpha_i,m)$ and 
$\deg h_{i,r}=(0,r)$
for $i\in I$, $m\in\Z$, 
$r\in \Z\backslash\{0\}$.
If $\deg x=(\beta,m)$ then 
we say that $x$ has 
{\it weight} $\beta$ 
and 
{\it homogeneous degree} $m$,  
and write $\beta=\wt x$, 
$m=\hdeg x$.
We denote by $(U_q\g)_{\beta}$ the graded component of $U_q\g$ of 
weight $\beta$.   

\subsection{Hopf algebra structure}

The algebra $U_q\g$ has a Hopf algebra structure
\begin{align*}
&\Delta(e_i)=e_i\otimes 1+k_i\otimes e_i,\quad
\Delta(f_i)=f_i\otimes k_i^{-1}+1\otimes f_i,
\quad
\Delta(k_i)=k_i\otimes k_i\,,
\\
&\varepsilon(e_i)=0,\quad \varepsilon(f_i)=0,\quad 
\varepsilon(k_i)=1\,,\\
&
S(e_i) = -k_i^{-1} e_i ,\quad S(f_i) = -f_i k_i,
\quad
S(k_i)=k_i^{-1}\,,
\end{align*}
where $i=0,\dots,n$. 

The following Proposition gives partial information about 
the coproduct of the Drinfeld generators. 
\begin{prop}\cite{Da}\label{apco} 
For $i\in I$ and $r > 0$, we have
\begin{equation}\label{h}
\Delta\left(\phi_{i,\pm r}^\pm \right) \in 
\sum_{0\leq l\leq r} \phi_{i,\pm l}^\pm \otimes \phi_{i,\pm (r - l)}^\pm
+\sum_{\beta\in Q^+\backslash\{0\}} \bigl(U_q\g\bigr)_{-\beta} \otimes \bigl(U_q\g\bigr)_{\beta}\,.
\end{equation}
\end{prop}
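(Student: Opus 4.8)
The plan is to prove Proposition \ref{apco} by reducing the statement about the Drinfeld Cartan currents $\phi^\pm_{i,\pm r}$ to the known coproduct formula for the Chevalley generators $e_i, f_i, k_i$, using the explicit (if complicated) Beck--Drinfeld identification of the Drinfeld generators inside $U_q\g$ together with the $Q\times\Z$-grading. First I would recall that each $\phi^\pm_{i,\pm r}$ lies in $U^0_q\g$ and, under the triangular decomposition \eqref{triangular-g}, has weight $0$ and homogeneous degree $\pm r$; this already forces $\Delta(\phi^\pm_{i,\pm r})$ to lie in $\bigoplus_{\beta\in Q}(U_q\g)_{-\beta}\otimes(U_q\g)_{\beta}$, since $\Delta$ preserves weight. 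The real content is: (a) the components with $\beta\in Q^+$, i.e. that only non-negative roots $\beta$ occur (no mixing with $\beta\in -Q^+\setminus\{0\}$), and (b) that the weight-zero ($\beta=0$) component has the stated triangular/grouplike-like shape $\sum_{0\le l\le r}\phi^\pm_{i,\pm l}\otimes\phi^\pm_{i,\pm(r-l)}$.

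For part (a), the cleanest route is induction on $r$ using the defining series relation \eqref{x+x-}, namely $(q_i-q_i^{-1})^{-1}\bigl(\phi^+_i(z)-\phi^-_i(z)\bigr)=$ the $\delta$-coefficient of $[x^+_i(z),x^-_i(w)]$, combined with the known coproducts $\Delta(x^+_{i,0})=\Delta(e_i)$, $\Delta(x^-_{i,0})=\Delta(f_i)$ at degree $0$ and their degree shifts produced by $x^\pm_{i,\pm 1}$ (which are again expressible through the affine Chevalley generators $e_0,f_0$). Concretely, one writes $\phi^+_{i,r}$ (for $r>0$) as a commutator of lower $x^+$ and $x^-$ modes and applies $\Delta$ as an algebra homomorphism; because $\Delta(x^+_{i,m})\in\sum_{k\ge 0}(U_q\g)_{\alpha_i-\beta}\otimes(U_q\g)_{\beta}$-type expressions with the first tensor factor having weight in $\alpha_i - Q^+$ and the second in $Q^+$ (and symmetrically for $x^-$), multiplying them out and projecting onto weight $0$ kills all contributions with $\beta\notin Q^+$. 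One should make this precise by first establishing, again by induction on $|m|$, the one-sided statements $\Delta(x^+_{i,m})\in \sum_{\beta\in Q^+}(U_q\g)_{\alpha_i-\beta}\otimes (U_q\g)_\beta$ and $\Delta(x^-_{i,m})\in \sum_{\beta\in Q^+}(U_q\g)_{-\beta}\otimes(U_q\g)_{-\alpha_i+\beta}$, which themselves follow from the degree-$0$ coproducts and the degree-raising operators $\mathrm{ad}(e_0),\mathrm{ad}(f_0)$ being compatible with $\Delta$.

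For part (b), the weight-zero component: after part (a) it remains to identify the $U^0_q\g\otimes U^0_q\g$ piece. Here the key input is that $U^0_q\g$ is commutative, generated by the $k_i^{\pm1}$ and the $h_{i,r}$, and that modulo $\sum_{\beta\in Q^+\setminus\{0\}}(U_q\g)_{-\beta}\otimes(U_q\g)_\beta$ the map $\Delta$ restricted to $U^0_q\g$ is an algebra homomorphism into $U^0_q\g\otimes U^0_q\g$. Since $\phi^\pm_i(z)=k_i^{\pm1}\exp(\pm(q_i-q_i^{-1})\sum_{\pm r>0}h_{i,r}z^r)$, it suffices to know the leading (weight-zero) part of $\Delta(h_{i,r})$; one shows by the same commutator induction that $\Delta(h_{i,r})\equiv h_{i,r}\otimes 1+1\otimes h_{i,r}$ modulo the lower-weight terms (primitivity of the Cartan currents at leading order), together with $\Delta(k_i)=k_i\otimes k_i$. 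Exponentiating the generating series then produces exactly $\phi^\pm_i(z)\otimes\phi^\pm_i(z)$ at leading order, and extracting the coefficient of $z^{\pm r}$ gives $\sum_{0\le l\le r}\phi^\pm_{i,\pm l}\otimes\phi^\pm_{i,\pm(r-l)}$, as claimed. The main obstacle I anticipate is bookkeeping in part (a): expressing $\phi^\pm_{i,\pm r}$ and $x^\pm_{i,m}$ through the affine Chevalley generators via the Beck--Drinfeld isomorphism is notationally heavy, and one must carefully track that the inductive ``degree-raising'' does not introduce negative-root contributions in the wrong tensor slot. Since the statement is attributed to Damiani \cite{Da}, in the actual write-up I would likely just cite that reference for the precise form, and present the grading argument above only as the conceptual reason the formula takes this shape.
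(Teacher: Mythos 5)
The paper does not prove this proposition; it simply cites Damiani \cite{Da}, so there is no in-text argument to compare your sketch against. Your outline identifies the correct conceptual ingredients (the $Q$-grading, reduction to the Chevalley coproducts via the Beck--Drinfeld identification, commutator induction, and the ``grouplike at leading order'' structure of the Cartan currents), and your decision to defer the full details to Damiani's paper mirrors what the authors themselves do.

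That said, as written your part (a) has a genuine gap. The one-sided weight estimates you propose, $\Delta(x^+_{i,m})\in\sum_{\beta\in Q^+}(U_q\g)_{\alpha_i-\beta}\otimes(U_q\g)_\beta$ and $\Delta(x^-_{i,m'})\in\sum_{\gamma\in Q^+}(U_q\g)_{-\gamma}\otimes(U_q\g)_{-\alpha_i+\gamma}$, do \emph{not} force the commutator $[\Delta(x^+_{i,m}),\Delta(x^-_{i,m'})]$ to land in $\sum_{\beta\in Q^+}(U_q\g)_{-\beta}\otimes(U_q\g)_\beta$. Taking the $\beta=\gamma=0$ components produces a contribution in $(U_q\g)_{\alpha_i}\otimes(U_q\g)_{-\alpha_i}$, which lies outside the allowed range. ``Projecting onto weight $0$'' does not remove this: the total tensor weight is automatically $0$ for every term, and the issue is precisely to exclude the pieces where the \emph{second} factor has negative weight. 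What actually saves the day is a genuine cancellation between the two orderings of the product; in the lowest degree case this is just $[\Delta(e_i),\Delta(f_i)]$ where $e_i\otimes f_i - e_i\otimes f_i = 0$, but that this persists at all modes and for the imaginary root vectors $e_{(m\delta,i)}$ requires verification, not a weight count. This is the technical heart of Damiani's argument.

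Within the framework of this paper there is also a cleaner route you did not mention: by Proposition \ref{prop:twist}, $\Delta(x)=\sigma(\cR_+)\,\Delta_D(x)\,\sigma(\cR_+)^{-1}$, and $\Delta_D(\phi^+_i(z))=\phi^+_i(z)\otimes\phi^+_i(z)$ lies entirely in $(U_q\g)_0\otimes(U_q\g)_0$. From \eqref{R+}, the element $\sigma(\cR_+)$ (and hence its inverse) lies in $1\otimes 1+\sum_{\beta\in Q^+\setminus\{0\}}(U_q\g)_{-\beta}\otimes(U_q\g)_{\beta}$, since each factor $f_{\beta_r}\otimes e_{\beta_r}$ has first-slot $\gb$-weight in $-Q^+\setminus\{0\}$. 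Conjugating a weight-$(0,0)$ element by something of this triangular form yields exactly the shape claimed in \eqref{h}. This avoids the commutator bookkeeping entirely, though one should be alert to a potential circularity: Damiani's construction of $\cR$ itself relies on coproduct estimates of the type being proved, so if one wants a proof from first principles rather than an explanation, the induction you sketch (made tight) is still what is needed.
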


\subsection{Borel algebras}\label{boralg}

We define two subalgebras of $U_q\g$
as follows.
\begin{align*}
& U_q\bo=\langle e_i\ (0\le i\le n),\quad k_i^{\pm 1} (i\in I)\rangle,
\\
& U_q\bbo=\langle f_i\ (0\le i\le n),\quad k_i^{\pm 1} (i\in I)\rangle\,.
\end{align*}
We call  $U_q\bo$ the {\it positive Borel algebra} (or simply the Borel algebra), 
and $ U_q\bbo$ the {\it negative Borel algebra}.
Algebras $U_q\bo$, $U_q\bbo$  are both Hopf subalgebras of $U_q\g$.   

Set $U^\pm_q\bo = U^\pm_q\g\cap U_q\bo$ and 
$U^0_q\bo = U^0_q\g\cap U_q\bo$. 
From the result of \cite{Be} 
it follows that we have the triangular decomposition
\begin{equation}\label{triandecomp}
U_q\bo\simeq U^-_q\bo\otimes 
U^0_q\bo \otimes U^+_q\bo.
\end{equation}
In terms of the Drinfeld generators we have
\begin{align*}
&U^+_q\bo = \langle x_{i,m}^+\ (i\in I, m\geq 0)\rangle\,,
\quad
U^0_q\bo =\langle h_{i,r},k_i^{\pm1} \ (i\in I, r>0)\rangle\,.
\end{align*} 
We have also 
$U^-_q\bo \supset \langle x_{i,r}^- \ (i\in I, r> 0)\rangle$
but the inclusion is proper except in the case $\gb=\slt$.

In \cite{Be,Da}, certain root vectors 
$e_\beta\in U_q\bo$, $f_\beta\in U_q\bbo$ are introduced with each positive real root $\beta$ of $\g$. 
In Appendix we give a review of their definition and basic facts. 
The subalgebras $U^\pm_q\bo$ are generated by root vectors, 
\begin{align*}
&U^+_q\bo=\langle e_{m\delta+\alpha}\mid m\ge0,\ \alpha\in\Delta^+ \rangle\,,
\\
&U^-_q\bo=\langle k_{\alpha}
e_{m\delta-\alpha}\mid m>0,\ \alpha\in\Delta^+ \rangle
\,,
\end{align*}
where we set $k_\alpha=\prod_{i\in I}k_i^{b_i}$ 
for $\alpha=\sum_{i\in I}b_i\alpha_i$. 

The root vectors have a convexity property \eqref{convex}
with respect to a total ordering 
\begin{align}
\beta_0\prec\beta_{-1}\prec\beta_{-2}\prec\cdots\prec
2\delta\prec\delta\prec \cdots
\prec\beta_3\prec\beta_2\prec\beta_1\,, 
\label{total-order}
\end{align}
where 
$\{\beta_r\mid r\le0\}=\{k\delta+\alpha\mid k\ge 0,\ \alpha\in \Delta^+\}$,
$\{\beta_r\mid r\ge1\}=\{k\delta-\alpha\mid k>0,\ \alpha\in \Delta^+\}$; 
see \eqref{real-rootv1}, \eqref{real-rootv2}, \eqref{root-order}.

\subsection{Universal $R$ matrix}
It is well known that $U_q\g$ is equipped with the universal $R$ matrix
$\cR\in U_q\bo\widehat{\otimes}U_q\bbo$, which satisfies 
\begin{align*}
&\cR\, \Delta(x)=\Delta^{op}(x) \, \cR\,\quad(x\in U_q\g),
\\
&\bigl(\Delta\otimes\id\bigr)\cR=\cR_{1,3}\cR_{2,3}\,,\\
&\bigl(\id\otimes\Delta\bigr)\cR=\cR_{1,3}\cR_{1,2}\,.
\end{align*}
Here $\Delta^{op}=\sigma\circ\Delta$, $\sigma(a\otimes b)=b\otimes a$, and 
$\cR_{1,2}=\cR\otimes 1$, etc..
It has the product form
\begin{align}
&\cR=\cR_+\cR_0\cR_- q^{-t_\infty} \,, 
\label{univR}
\end{align}
where each factor is given in terms of root vectors as follows. 
\begin{align}
&\cR_+=\prod_{r\le0}
\exp_{q_{\beta_r}}\left(-(q_{\beta_r}-q^{-1}_{\beta_r})
e_{\beta_r}\otimes f_{\beta_r}\right)\,,
\label{R+}\\
&\cR_0=
\exp\Bigl(-\sum_{r>0\atop i,j\in I}\frac{r\widetilde{B}_{i,j}(q^r)}{q^r-q^{-r}}
(q_i-q_i^{-1})(q_j-q_j^{-1})
\, h_{i,r}\otimes h_{j,-r}\Bigr)\,,
\label{R0}\\
&\cR_-=\prod_{r\ge1}
\exp_{q_{\beta_r}}\left(-(q_{\beta_r}-q^{-1}_{\beta_r})
e_{\beta_r}\otimes f_{\beta_r}\right)\,.
\label{R-}
\end{align}
In these formulas, we set $q_\beta=q^{(\beta,\beta)/2}$, 
$\exp_q(x)=\sum_{n=0}^\infty x^n \prod_{j=1}^n (q^2-1)/(q^{2j}-1)$. 
The matrix $\bigl(\widetilde{B}_{i,j}(q)\bigr)_{i,j\in I}$ is the inverse matrix of 
$\bigl([d_iC_{i,j}]_q\bigr)_{i,j\in I}$.  
In \eqref{R+} and  \eqref{R-}, the product is ordered 
from left to right in the decreasing order of $r$ as in  \eqref{total-order}.
The element $t_\infty$ is formally given by
$\sum_{i,j\in I}d_i(C^{-1})_{i,j}h_{i,0}\otimes h_{j,0}$, 
where we set $k_i=q_i^{h_{i,0}}$. 
The expression $q^{-t_\infty}$ has a well defined meaning 
on a tensor product of weight modules. 

\subsection{Drinfeld coproduct}

Along with the `standard' coproduct $\Delta$ introduced above,  
we also make use of the so-called Drinfeld coproduct $\Delta_D$
given by 
\begin{align}\label{Drin coprod}
&\Delta_D\bigl(x^+_i(z)\bigr)=x^+_i(z)\otimes 1
+\phi^-_i(z)\otimes x^+_i(z)\,,
\notag
\\
&\Delta_D\bigl(x^-_i(z)\bigr)=
x^-_i(z)\otimes \phi^+_i(z)
+ 1\otimes x^-_i(z)\,,
\\
&\Delta_D\bigl(\phi^\pm_i(z)\bigr)=\phi^\pm_i(z)\otimes \phi^\pm_i(z)\,. \notag
\end{align}
Since the terms in the right hand side involve infinite sums of generators,  
$\Delta_D$ is not a coproduct in the usual sense. Nevertheless, 
under certain circumstances it can be used to define a module structure
on tensor products of representations, see Section \ref{MV subsec}. 

The Drinfeld coproduct $\Delta_D$ and the standard coproduct $\Delta$
are related to each other through the factor \eqref{R+} of the universal $R$ matrix:
\begin{prop}\cite{EKP}\label{prop:twist}
For any $x\in U_q\g$ we have
\begin{align*}
\Delta_D(x)=\sigma(\cR_+)^{-1}\cdot \Delta(x)\cdot\sigma(\cR_+)\,.
\end{align*} 
\end{prop}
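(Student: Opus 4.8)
The plan is to reduce the identity to the Drinfeld currents and then to carry out one, admittedly intricate, computation with the explicit factor $\sigma(\cR_+)$. Note first that the right-hand side, $\mathrm{Ad}\bigl(\sigma(\cR_+)^{-1}\bigr)\circ\Delta$, is a genuine algebra homomorphism from $U_q\g$ into the completion $U_q\g\,\widehat\otimes\,U_q\g$ in which the factors of $\cR_+$ act; if one defines $\Delta_D$ on arbitrary elements by extending the formulas \eqref{Drin coprod} multiplicatively, it therefore suffices to prove
\begin{equation*}
\sigma(\cR_+)\,\Delta_D(g)=\Delta(g)\,\sigma(\cR_+)
\end{equation*}
for $g$ ranging over $k_i$, $x^{\pm}_i(z)$ and $\phi^{\pm}_i(z)$ ($i\in I$); this also shows that $\Delta_D$ so defined respects the relations of $U_q\g$. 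For $g=k_i$ the identity is immediate, since by \eqref{R+} every tensor factor of $\cR_+$ is a sum of elements $e_{\beta_r}\otimes f_{\beta_r}$ with $\wt f_{\beta_r}=-\wt e_{\beta_r}\in Q^+$, so $\sigma(\cR_+)$ commutes with $\Delta(k_i)=k_i\otimes k_i$. Once the identity is known for $g=x^{\pm}_i(z)$ it follows for $g=\phi^{\pm}_i(z)$: all three maps are algebra homomorphisms, relation \eqref{x+x-} expresses $\phi^+_i(z)-\phi^-_i(z)$ through commutators of $x^+_i$ and $x^-_i$, and this, together with the already-settled constant terms $\phi^{\pm}_{i,0}=k_i^{\pm1}$ and the fact that $\phi^+_i$ and $\phi^-_i$ are supported on opposite powers of $z$, determines the images of all $\phi^{\pm}_{i,m}$. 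Thus everything reduces to the case $g=x^{\pm}_i(z)$.

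For $g=x^+_i(z)$ one has to check that conjugating $x^+_i(z)\otimes 1+\phi^-_i(z)\otimes x^+_i(z)$ by $\sigma(\cR_+)$ yields $\Delta\bigl(x^+_i(z)\bigr)$; the case $g=x^-_i(z)$ is entirely parallel. The idea is to push the two summands through the factors $\exp_{q_{\beta_r}}\!\bigl(-(q_{\beta_r}-q_{\beta_r}^{-1})f_{\beta_r}\otimes e_{\beta_r}\bigr)$ of $\sigma(\cR_+)$ one at a time, in the order \eqref{total-order}. In the first tensor leg this requires the commutators of $x^+_i(z)$ and of $\phi^-_i(z)$ with the root vectors $f_{\beta_r}$ ($\beta_r\in\Z_{\ge0}\delta+\Delta^+$), and in the second leg the commutators of $x^+_i(z)$ with the $e_{\beta_r}$; all of these follow from the defining relations \eqref{phi-phi}--\eqref{x-Serre} and from the convexity property \eqref{convex} of the root vectors relative to \eqref{total-order}. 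Convexity is precisely what makes the procedure terminate at each fixed weight and homogeneous degree — only finitely many $\beta_r$ contribute, the rest passing through freely — and the terms produced recombine, via \eqref{x+x-} and \eqref{phi-x}, into a Drinfeld-current expansion with leading part $x^+_i(z)\otimes 1+k_i\otimes x^+_i(z)$; comparison with the partial coproduct formulas of \cite{Da} identifies it with $\Delta\bigl(x^+_i(z)\bigr)$. It is convenient to do this first for $\gb=\slt$, where $U^-_q\bo$ is generated by the $x^-_{i,r}$ ($r>0$) and the root vectors in \eqref{R+} are literally Drinfeld generators, and then to obtain the general case by restricting to the $\slt$-subalgebras attached to the simple roots and assembling along the convex order.

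Two points deserve care. The product $\cR_+$ is infinite and all the rearrangements live in the completion, so one must verify convergence on a tensor product of modules from $\cO_\bo$; this holds because each leg of each summand of $\cR_+$ acts on such modules locally nilpotently up to a weight shift, so only finitely many factors act nontrivially on a given vector. The main obstacle is the remaining, genuinely laborious, step: producing the explicit commutation relations of the Drinfeld currents with the root vectors $e_{m\delta+\alpha}$, $f_{m\delta+\alpha}$ attached to non-simple and imaginary-shifted roots, and checking that after conjugation the terms collapse exactly onto $\Delta\bigl(x^{\pm}_i(z)\bigr)$; this is where \eqref{convex} and the Serre relations \eqref{x-Serre} do the real work, everything else being formal. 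This is in substance the argument of \cite{EKP}.
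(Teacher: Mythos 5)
The paper does not supply a proof of this proposition; it is cited verbatim to \cite{EKP}, so there is no in-house argument against which to compare your write-up. Taken on its own merits, your framing is sound and matches the standard twisting strategy: reduce to the Drinfeld currents, dispose of $k_i$ by a weight-zero observation, recover $\phi^\pm_i(z)$ from $x^\pm_i(z)$ via \eqref{x+x-} together with the separation of nonnegative and nonpositive powers of $z$ and the known constant term $k_i^{\pm1}$, and attack $x^\pm_i(z)$ by commuting through the factors of $\sigma(\cR_+)$ along the convex order \eqref{total-order}.

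However, the central step is not actually carried out. You describe, but do not perform, the computation that conjugating $x^+_i(z)\otimes 1+\phi^-_i(z)\otimes x^+_i(z)$ past the factors $\exp_{q_{\beta_r}}\bigl(-(q_{\beta_r}-q_{\beta_r}^{-1})f_{\beta_r}\otimes e_{\beta_r}\bigr)$ reproduces $\Delta\bigl(x^+_i(z)\bigr)$; that computation is essentially the entire mathematical content of the proposition, and acknowledging it as ``the main obstacle'' is not the same as closing it. Moreover, the proposed shortcut of proving the $\slt$ case first and then ``assembling along the convex order'' is not a reduction in any precise sense: the root vectors $e_{m\delta+\alpha}$ and $f_{m\delta+\alpha}$ for non-simple $\alpha$ do not lie inside a rank-one subalgebra, and the assembly step would itself require the same higher-rank $q$-commutator identities among root vectors (governed by \eqref{convex}, \eqref{x-Serre}, and the explicit structure in \cite{Da}) that the reduction was meant to avoid. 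As it stands, then, this is a roadmap pointing at \cite{EKP} rather than a proof; to make it one you would need to establish the commutation relations of $x^\pm_i(z)$ and $\phi^-_i(z)$ with each $e_{\beta_r}$, $f_{\beta_r}$ and exhibit the telescoping that produces the terms $x^+_i(z)\otimes 1$ and $k_i\otimes x^+_i(z)$ plus the higher-weight corrections recorded in Proposition \ref{apco}.
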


The Borel subalgebra 
$U_q\bo$ is {\it not} a Hopf subalgebra of $U_q\g$
with respect to $\Delta_D$. 
Proposition \ref{prop:twist} implies that  
it is rather a coideal, i.e.,
\begin{align}
&\Delta_D\bigl(U_q \bo\bigr)\, \subset \,
U_q\g\,\widehat{\otimes}\, U_q\bo\,.
\label{coideal-b}
\end{align}

\section{Representations of $U_q\g$ and $U_q\bo$}\label{catO}

We review known facts about representations of $U_q\g$ and $U_q\bo$.

\subsection{Weights and $\ell$-weights}\label{sec:wts}

We begin by introducing some terminology. 

First let $\tb^*=\bigl(\C^\times\bigr)^I$. 
For $\lambda\in P$ we define elements
$\sfq^\lambda=\bigl(q_j^{(\lambda,\alpha_j^\vee)}\bigr)_{j\in I}\in \tb^*$.
For a $U_q\g$ module $V$ and 
$\mu=\bigl(\mu_i\bigr)_{i\in I}\in \tb^*$, we set 
\begin{align*}
V_{\mu}=\{v\in V \mid k_i\, v = \mu_i v\quad (i\in I)\}\,.
\end{align*}
We have then 
$\phi_{i,r}^\pm (V_\mu)\subset V_\mu$
and 
$x_{i,r}^\pm (V_{\mu}) \subset V_{\mu\, \sfq^{\pm \alpha_i}}$
for all $i\in I$ and $r\in\Z$.  
We say that $\mu\in \tb^*$ is a weight of $V$ if $V_\mu\neq0$.
The set of weights of $V$ is denoted by $\wt V$.
We say that $V$ is $\tb$-diagonalizable if 
$V=\underset{\mu\in \wt V}{\bigoplus}V_{\mu}$. 

Next let $\tb^*_{\ell,\g}$ denote the set of all pairs 
$\Psib = (\Psib^+,\Psib^-)$ consisting of $I$-tuples of formal series
\begin{align*}
&\Psib^\pm=\bigl(\Psi^\pm_i(z)\bigr)_{i\in I},
\quad
\Psi^\pm_i(z) = \sum_{\pm m\geq 0}\Psi^\pm_{i,m} z^{ m}
\in \C[[z^{\pm 1}]]\,,
\end{align*}
such that $\Psi^+_{i,0}\Psi^-_{i,0}=1$ for all $i\in I$. 
The sets $\tb^*$ and  $\tb^*_{\ell,\g}$ are both 
abelian groups by pointwise multiplication, 
and we have a group homomorphism $\varpi:\tb^*_{\ell,\g}\rightarrow \tb^*$ 
given by $\varpi(\Psib)=\bigl(\Psi^+_{i,0}\bigr)_{i\in I}$.

For a $\tb$-diagonalizable $U_q\g$ module
$V=\underset{\mu\in \wt V}{\bigoplus}V_{\mu}$ 
and $\Psib\in\tb_{\ell,\g}^*$, we set
\begin{align*}
V_{\Psibs} &=
\{v\in V_{\varpi(\Psibs)}
\mid \text{there exists a $p\ge0$ such that}
\\
&\text{$(\phi_{i,m}^\epsilon - \Psi^\epsilon_{i,m})^pv = 0$ 
($i\in I, \epsilon m\geq 0, \epsilon\in\{+,-\}$)}\}\,.
\nn
\end{align*}
If $V_{\Psibs} \neq0$, then we call it 
 $\ell$-weight space of $V$ of $\ell$-weight $\Psib$.

We say that a $U_q\g$ module $V$ is a highest $\ell$-weight module of 
highest $\ell$-weight $\Psib\in\tb^*_{\ell,\g}$ 
if it is generated by a non-zero vector $v\in V$
such that
\begin{align*}
U^+_q\g \cdot v=\C v\,,\quad \phi^\epsilon_i(z)v=\Psi^\epsilon_i(z)v
\quad (i\in I,\ \epsilon\in\{+,-\}). 
\end{align*}
If it is the case, we say $v$ is a highest $\ell$-weight vector of $V$. 
For each $\Psib\in\tb^*_{\ell,\g}$, there exists a unique 
simple highest $\ell$-weight module of highest $\ell$-weight $\Psib$. 
We denote it by $L(\Psib)$. Owing to the triangular decomposition
\eqref{triangular-g}, $\wt L(\Psib)$ is contained in the set
\begin{align*}
D(\mu)=\{\mu\, {\sf q}^{-\beta}\mid \beta\in Q^+\}
\end{align*}
where $\mu=\varpi(\Psib)$. 

For $U_q\bo$ modules, weight and weight space
 are defined in the same way as above. 
The notion of $\ell$-weight is defined similarly, using a single 
$I$-tuple of formal series
\begin{align*}
&\Psib=(\Psi_i(z))_{i\in I},
\quad
\Psi_i(z) = \sum_{m\ge0} \Psi_{i, m} z^{m}\in \C[[z]]\,
\end{align*}
satisfying $\Psi_{i,0}\neq 0$. 
We denote by $\tb^*_{\ell,\bo}$ the set of all such $\Psib$'s. 
A highest $\ell$-weight module $V$ of $U_q\bo$
is defined by the conditions that 
$V=U_q\bo\cdot v$, $v\neq 0$, and 
\begin{align*}
U^+_q\bo\cdot v=\C v\,,\quad \phi^+_i(z)v=\Psi_i(z)v
\quad (i\in I). 
\end{align*}
The unique simple highest $\ell$-weight module of 
highest $\ell$-weight $\Psib\in \tb^*_{\ell,\bo}$ 
is denote by $L(\Psib)$.

\subsection{Category $\cO_{\g}$}\label{sec:Og}

We consider a full subcategory $\cO_{\g}$ 
of the category of all $U_q\g$ modules. 
By definition, a $U_q\g$ module $V$ is an object of
category $\cO_{\g}$ if the following conditions (i)--(iii) hold:
\begin{enumerate}
\item $V$ is $\tb$-diagonalizable,
\item $\dim V_\mu<\infty$ for all $\mu$, 
\item There exist $\mu_1,\cdots,\mu_N\in \tb^*$ such that
$\wt V\subset D(\mu_1)\cup\cdots\cup D(\mu_N)$. 
\end{enumerate}
Category $\cO_{\g}$ is a monoidal category.

Simple objects of $\cO_{\g}$ are classified by 
highest $\ell$-weights as explained below. 
We say that $\Psib\in\tb^*_{\ell,\g}$ is rational 
if there exists 
an $I$-tuple of rational functions $\{f_i(z)\}_{i\in I}$   
such that $f_i(z)$ is regular at $z=0,\infty$, satisfies
$f_i(0)f_i(\infty)=1$, and that  
$\Psi_i^\pm(z)$ are expansions of $f_i(z)$ at $z^{\pm 1}=0$
for all $i\in I$.  
If it is the case, we do not make distinction between the formal series
$\Psi^\pm_i(z)$ and the rational function $f_i(z)$. 
We denote by $\mfr_{\g}$
the set of rational elements of $\tb^*_{\ell,\g}$. 

\begin{thm}\label{thm:simple-g}\cite{MY} 
Let $\Psib\in \tb^*_{\ell,\g}$. Then  
all weight spaces of $L(\Psib)$ are finite dimensional 
if and only if $\Psib$ is rational. 
The map $\Psib\mapsto L(\Psib)$ gives a bijection
between the set of all rational 
$\Psib\in\mfr_{\g}$
 and isomorphism classes of 
simple objects of $\cO_{\g}$. 
\end{thm}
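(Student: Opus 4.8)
The statement to prove is Theorem~\ref{thm:simple-g}, which is the classification of simple objects in $\cO_\g$ together with the characterization of when all weight spaces are finite-dimensional (rationality of the highest $\ell$-weight). Here is how I would approach it.

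\textbf{The plan.} The easy direction is that if all weight spaces of $L(\Psib)$ are finite-dimensional then $\Psib$ is rational. For this I would restrict attention to one $i\in I$ at a time and exploit the $U_{q_i}\widehat{\slt}$-subalgebra generated by $x_{i,m}^\pm$, $h_{i,r}$, $k_i^{\pm1}$. On a highest $\ell$-weight vector $v$, applying $x_{i,0}^-$ repeatedly generates a highest $\ell$-weight $U_{q_i}\widehat{\slt}$-module; finite-dimensionality of the weight spaces of $L(\Psib)$ forces this to be (a quotient of) a finite-dimensional-at-each-weight module, and the classical $\slt$ analysis (Chari--Pressley, Frenkel--Reshetikhin) shows $\Psi_i^\pm(z)$ must then be the expansions at $0$ and $\infty$ of a ratio of polynomials $P_i(zq_i^{-1})/P_i(zq_i)$ up to a constant — in particular rational, regular at $0,\infty$, with $f_i(0)f_i(\infty)=1$. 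So $\Psib\in\mfr_\g$.

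\textbf{The converse.} Suppose $\Psib\in\mfr_\g$ is rational; I must show every weight space of $L(\Psib)$ is finite-dimensional and that $L(\Psib)\in\cO_\g$. By the triangular decomposition \eqref{triangular-g}, $L(\Psib)=U_q^-\g\cdot v$, so $\wt L(\Psib)\subset D(\mu)$ with $\mu=\varpi(\Psib)$, giving condition (iii) with $N=1$; $\tb$-diagonalizability (i) is standard for highest $\ell$-weight modules over this algebra since the $k_i$ act semisimply by construction. The real content is (ii), $\dim L(\Psib)_\mu<\infty$. The strategy I would use is to produce, for each rational $\Psib$, a highest $\ell$-weight module with finite-dimensional weight spaces that surjects onto $L(\Psib)$; then finite-dimensionality descends to the quotient. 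Such a module can be built as a tensor product: write $f_i(z)=c_i\,\prod(\text{linear factors in }z)/\prod(\text{linear factors in }z)$; a single linear-factor datum $\Psib_{i,a}^{\pm}$ corresponds to a (known) Kirillov--Reshetikhin-type or fundamental-type module $L(\Psib_{i,a})$, each of which lies in $\cO_\g$ (in fact the positive fundamental modules $M_{i,a}^+$ and the evaluation/finite-dimensional modules already discussed in the introduction provide exactly these building blocks, and the denominator linear factors are handled by the "negative" prefundamental modules of \cite{HJ}). Their tensor product is again in $\cO_\g$ since $\cO_\g$ is monoidal, and it has a highest $\ell$-weight vector with $\ell$-weight the product of the factors, hence equal to $\Psib$ up to the grouplike normalization, which one arranges by including the constant $c_i$ via a one-dimensional module. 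Taking the irreducible quotient gives $L(\Psib)$ with the desired finiteness.

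\textbf{Bijectivity.} Injectivity of $\Psib\mapsto L(\Psib)$ is immediate: the highest $\ell$-weight is recovered from $L(\Psib)$ as the joint (generalized) eigenvalue of the $\phi_i^\pm(z)$ on the (one-dimensional, by condition that weight $\mu$ has the highest vector) top weight space, and two non-isomorphic simple highest $\ell$-weight modules cannot share a highest $\ell$-weight. Surjectivity: given a simple $V\in\cO_\g$, condition (iii) lets me pick a maximal weight $\mu$ (maximal in the partial order on $\bigcup D(\mu_j)$, which exists because the $\mu_j$ are finite in number and each $D(\mu_j)$ has $\mu_j$ as its unique maximal element, and $\dim V_\mu<\infty$); on $V_\mu$ the commuting operators $\phi_{i,m}^\pm$ have a common eigenvector $v$ (working over $\C$), and maximality of $\mu$ forces $x_{i,r}^+v=0$ for all $i,r$ by a weight argument on $x_{i,r}^+ v\in V_{\mu\sfq^{\alpha_i}}$ combined with the grading; hence $U_q\g\cdot v$ is a highest $\ell$-weight submodule, and by simplicity $V=U_q\g\cdot v=L(\Psib)$ for $\Psib$ the eigenvalue data, which is rational by the first (easy) direction.

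\textbf{Main obstacle.} The crux is the converse — proving $\dim L(\Psib)_\mu<\infty$ for all rational $\Psib$. The cleanest route is the tensor-product construction sketched above, but it requires knowing that all the "prime" building blocks (prefundamental modules of both signs, Kirillov--Reshetikhin modules, evaluation modules) lie in $\cO_\g$ and together generate, under tensor product and irreducible quotient, every rational $\ell$-weight; assembling this decomposition of an arbitrary rational $f_i(z)$ into admissible factors and checking the $\ell$-weight of the tensor product multiplies correctly (using Proposition~\ref{apco} for the leading behaviour of $\Delta(\phi_{i,\pm r}^\pm)$) is where the real work lies. An alternative, more self-contained but more laborious route avoids tensor products: bound $\dim L(\Psib)_{\mu\sfq^{-\beta}}$ directly by a PBW-type argument, showing that only finitely many monomials in the negative root vectors $f_\beta$ (equivalently in the $x_{i,-m}^-$) can act nontrivially, using the Serre relations \eqref{x-Serre} and the $\slt$-string structure in each direction $i$ to cap the powers — this is essentially how Mukhin--Young \cite{MY} proceed, and I would fall back on it if the monoidal argument runs into trouble with the normalization constants or with the precise catalogue of prime modules available in $\cO_\g$.
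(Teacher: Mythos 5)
The paper does not prove Theorem~\ref{thm:simple-g}: it is imported verbatim from Mukhin--Young \cite{MY}, so there is no in-paper proof against which to compare line by line. Judging your argument on its own terms, the overall architecture (easy direction by reduction to $U_{q_i}\widehat{\slt}$ blocks, hard direction by building a weight-finite module that surjects onto $L(\Psib)$, surjectivity/injectivity by the standard maximal-weight argument) is the right shape, and the last part is essentially correct.

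However, there is a concrete error in your choice of building blocks for the converse. You propose to realize $L(\Psib)$ as an irreducible quotient of a tensor product containing positive fundamental modules $M^+_{i,a}$ and negative prefundamental modules, asserting that these ``lie in $\cO_\g$.'' They do not: the highest $\ell$-weight of $M^+_{i,a}$ is $\Psi_i(z)=a^{-1/2}(1-az)$, which is a genuine polynomial and hence has a pole at $z=\infty$; it fails the condition $f_i(0)f_i(\infty)=1$ defining $\mfr_\g$, and the paper explicitly states (Section~\ref{sec:Ob}) that $M^\pm_{i,a}$ are $U_q\bo$ modules that \emph{cannot} be obtained by restricting simple objects of $\cO_\g$. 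Feeding them into a monoidal construction inside $\cO_\g$ is therefore illegitimate. The fix is to use balanced factors: each $f_i(z)\in\mfr_\g$ is a product of terms of the form $(b/a)^{1/2}\frac{1-az}{1-bz}$, and each such term is the highest $\ell$-weight of an honest $U_q\g$ module $L(X_{i,a}X_{i,b}^{-1})$ — precisely the parabolic Verma / evaluation modules $L(\bm^+_{i,a,K})$ described in the examples of Section~\ref{q-char sec}. These, not the prefundamentals, are the correct prime constituents for the $\cO_\g$ setting. Your fallback PBW/Serre-relation argument is indeed closer to how \cite{MY} handles finiteness, but as written it is only a sketch; the tensor route works too once the building blocks are corrected.
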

We quote also a classical result concerning finite dimensional modules. 
A $\tb$-diagonalizable $U_q\g$ module is said to be 
of type $1$ if all weights
are of the form $\bigl(q_i^{(\Lambda,\alpha_i^\vee)}\bigr)_{i\in I}$
with some $\Lambda\in P$. 
\begin{thm}\cite{CP}\label{Ch-Pr}
A $U_q\g$ module $V$ of type $1$ is finite dimensional if and only if 
$V= L(\Psib)$ with some 
$\Psib\in\mfr_{\g}$
which has the form 
\begin{align*}
\Psi_i^\pm(z)=q_i^{\deg P_i}\frac{P_i(q^{-1}_i z)} {P_i(q_i z)} 
\quad (i\in I)\,,
\end{align*}
where $P_i(z)$ is a monic polynomial satisfying $P_i(0)=1$.
\end{thm}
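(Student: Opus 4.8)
The plan is to prove both implications by reducing to the rank-one subalgebras $U_i\subset U_q\g$ attached to the nodes $i\in I$. For the \emph{only if} direction, let $V$ be a finite-dimensional module of type $1$; assuming (as the statement implicitly does) that $V$ is irreducible, I first produce a highest $\ell$-weight vector. Since $V$ is finite-dimensional it lies in $\cO_\g$, so I may pick a weight $\mu\in\wt V$ which is maximal in the sense that $\mu\,\sfq^{\alpha_i}\notin\wt V$ for all $i\in I$. The space $V_\mu$ is finite-dimensional and stable under the commutative subalgebra $U^0_q\g$, hence contains a joint generalized eigenvector $v$; by maximality of $\mu$ one has $U^+_q\g\cdot v=\C v$, so $v$ is a highest $\ell$-weight vector. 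As $V$ is irreducible, $V=L(\Psib)$ for the corresponding $\Psib\in\tb^*_{\ell,\g}$, and by Theorem~\ref{thm:simple-g} the element $\Psib$ is rational, $\Psib\in\mfr_\g$, because all weight spaces of $V$ are finite-dimensional.

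To pin down the shape of $\Psib$, fix $i\in I$ and let $U_i\subset U_q\g$ be the subalgebra generated by $x^\pm_{i,m}$ $(m\in\Z)$, $h_{i,r}$ $(r\in\Z\setminus\{0\})$ and $k_i^{\pm1}$; by the Drinfeld relations \eqref{phi-x}, \eqref{x+x-}, \eqref{x-x}, \eqref{x-Serre} with $j=i$, together with \eqref{hx}, $U_i$ is a homomorphic image of the quantum loop algebra $U_{q_i}\slth$. The $U_i$-submodule of $L(\Psib)$ generated by $v$ is finite-dimensional and is a highest $\ell$-weight module with highest $\ell$-weight $\bigl(\Psi^+_i(z),\Psi^-_i(z)\bigr)$. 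We are thus reduced to the rank-one assertion: a finite-dimensional highest $\ell$-weight module over $U_v\slth$ has highest $\ell$-weight of the form $v^{\deg P}P(v^{-1}z)/P(vz)$ for a monic polynomial $P$ with $P(0)=1$. This is the technical core. It is proved by a direct computation inside $U_v\slth$: acting on the highest $\ell$-weight vector by $x^-_{1,0},x^-_{1,1},\dots$ and then re-applying $x^+_{1,0}$ and $x^+_{1,1}$, and using \eqref{x+x-} together with the exchange relation \eqref{x-x}, one obtains a linear recursion among the coefficients $\Psi^\pm_{1,m}$; finite-dimensionality forces this recursion to terminate, and termination is precisely the statement that the generating series $\Psi^\pm_1(z)$ are the expansions at $z^{\pm1}=0$ of a common rational function of the displayed type, with $\deg P$ equal to $(\wt v,\alpha^\vee_1)$ and $P(0)=1$ forced by $\Psi^+_{1,0}\Psi^-_{1,0}=1$. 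Running this over all $i\in I$ yields the required form of $\Psib$.

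For the \emph{if} direction, given monic $P_i$ with $P_i(0)=1$ $(i\in I)$, factor $P_i(z)=\prod_k(1-a_{i,k}z)$, and for $i\in I$, $a\in\C^\times$ let $\Psib^{(i,a)}\in\mfr_\g$ denote the ``one-box'' $\ell$-weight whose $j$-th component equals $q_i^{\delta_{i,j}}\bigl((1-aq_i^{-1}z)/(1-aq_iz)\bigr)^{\delta_{i,j}}$. Each fundamental module $L(\Psib^{(i,a)})$ is finite-dimensional: this is again a rank-one-type statement, which one establishes by direct construction — in type $A$ via the evaluation homomorphisms from $U_q\g$ onto $U_q\gl_{n+1}$, and in general by bounding the $\alpha^\vee_j$-string lengths node by node (equivalently, via the known construction of the fundamental $U_q\g$-modules). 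Since $\Psib=\prod_{i,k}\Psib^{(i,a_{i,k})}$, Proposition~\ref{apco} (the cross terms of $\Delta(\phi^\pm_{i,\pm r})$ annihilate the product of highest $\ell$-weight vectors, lying as they do in $(U_q\g)_{-\beta}\otimes(U_q\g)_\beta$ with $\beta\in Q^+\setminus\{0\}$) shows that the tensor product $W=\bigotimes_{i,k}L(\Psib^{(i,a_{i,k})})$, taken with the standard coproduct $\Delta$, is a highest $\ell$-weight module generated by the tensor product of the highest $\ell$-weight vectors, with highest $\ell$-weight $\Psib$. Hence $L(\Psib)$ is the unique irreducible quotient of the finite-dimensional module $W$, and therefore finite-dimensional.

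The main obstacle is concentrated in the two rank-one inputs: extracting the precise rational shape of $\Psib$ from finite-dimensionality, and proving that the fundamental modules $L(\Psib^{(i,a)})$ are finite-dimensional. Both are ``rank one in spirit'' — finiteness of a highest $\ell$-weight module is controlled string by string by the subalgebras $U_{q_i}\slth$ — but the first needs a careful manipulation of the Drinfeld relations to see that the recursion on the $\Psi^\pm_{i,m}$ closes into a ratio of polynomials, while the second presupposes one has the fundamental modules at hand. The remaining steps — the reduction to highest $\ell$-weight modules, the appeal to Theorem~\ref{thm:simple-g}, and the tensor-product argument — are routine.
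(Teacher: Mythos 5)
The paper does not prove this theorem; it is stated as a citation to Chari--Pressley \cite{CP}, so there is no in-text proof to compare against. Your outline is, in substance, the standard Chari--Pressley argument: find a highest $\ell$-weight vector by choosing a maximal weight, reduce to the rank-one subalgebras $U_{q_i}\slth$ to extract the Drinfeld-polynomial form of $\Psib$, and for the converse construct $L(\Psib)$ as the irreducible quotient of a finite-dimensional tensor product of fundamental modules. The structure is sound and the two points you isolate as ``the main obstacle'' --- the $\slth$-computation forcing $\Psi^\pm_1(z)$ to be a ratio of polynomials, and the finite-dimensionality of the fundamental modules $L(\Psib^{(i,a)})$ --- are exactly where the real work of \cite{CP} lies; a sketch that delegates these to citations is appropriate here. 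Two small points of precision: in the ``only if'' step you should take a joint \emph{eigenvector} (not merely a generalized one) of the commuting family $\{\phi^\pm_{i,m}\}$ in $V_\mu$, which exists since $V_\mu$ is finite-dimensional; being a highest $\ell$-weight vector requires genuine eigenvector status. And for the ``if'' direction, Proposition~\ref{apco} controls $\Delta(\phi^\pm_{i,\pm r})$ but not $\Delta(x^+_{i,m})$; to conclude that the tensor product of highest $\ell$-weight vectors is killed by $U^+_q\g$ you also need the coproduct estimate for $x^+_{i,m}$ (as in Lemma~\ref{lem:copro} of the appendix, valid for all $m\in\Z$ by \cite{Da}): the cross terms have the second tensor factor in $U^+_q\bo$ of strictly positive weight, hence annihilate the highest weight vector of the second module. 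With those caveats your proposal is a faithful reconstruction of the cited proof rather than an alternative route.
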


As in the case of Kac-Moody Lie algebras, 
\cite{Kac}, Section 9.6, the notion of the multiplicity $[V:L(\Psib)]$ 
of a simple object $L(\Psib)$, $\Psib\in \mfr_{\g}$, in $V\in \Ob\mathcal{O}_\g$
is well-defined. 
Following \cite{HL}, we define the Grothendieck ring $\Rep\, U_q\g$ of category $\cO_\g$ as follows.
By definition,  $\Rep\, U_q\g$ is an additive group of maps $c:\mfr_{\g}\to \Z$ 
such that $\supp(c)$ is contained in a finite union of the $D(\mu)$'s, 
and that  $\supp(c)\cap\varpi^{-1}(\omega)$ is a finite set for any $\omega\in\tb^*$.
Here we write $\supp(c)=\{\Psib\in\mfr_{\g}\mid c(\Psib)\neq0\}$. 
The ring structure is defined by
 \[
(cc')(\Psib'')=\sum_{\Psibs,\Psibs'\in\mfr_{\g}\atop \Psibs\Psibs'=\Psibs''}c(\Psib)c'(\Psib')[L(\Psib)\otimes L(\Psib'):L(\Psib'')].
\]
We write an element $c$ of  $\Rep\, U_q\g$ also as a formal sum
$\sum_{\Psibs\in \mfr_{\g}} c(\Psib)[L(\Psib)]$
where $[L(\Psib)]$ means the map $\Psib'\mapsto \delta_{\Psibs,\Psibs'}$.
For $V\in\Ob \cO_\g$ we set $[V]=\sum_{\Psibs\in\mfr_{\g}}[V:L(\Psib)][L(\Psib)]\in\Rep\,U_q\g$. 
We have $[V_1\otimes V_2]=[V_1][V_2]$ and $[V_1\oplus V_2]=[V_1]+[V_2]$.
In addition, for any short exact sequence of $U_q\g$ modules $0\to V_1\to V_2\to V_3\to 0$ we have the relation
$[V_2]=[V_1]+[V_3]$ in $\Rep\,U_q\g$.

\subsection{Category $\cO_{\bo}$}\label{sec:Ob}

We define category $\cO_{\bo}$ of $U_q\bo$ modules 
by the same conditions (i)--(iii) as in the previous subsection, 
replacing $U_q\g$ modules with $U_q\bo$ modules. 
We denote $\Rep\,U_q\bo$ the corresponding Grothendieck ring. 
We say that $\Psib\in\tb^*_{\ell,\bo}$ is rational 
if there exists 
an $I$-tuple of rational functions $\{f_i(z)\}_{i\in I}$   
such that $f_i(z)$ is regular and non-zero at $z=0$, 
and that 
$\Psi_i(z)$ is an expansion of $f_i(z)$ at $z=0$  
for all $i\in I$. 
We denote by $\mfr_{\bo}$
the set of rational elements of $\tb^*_{\ell,\bo}$.

The following result is a counterpart to Theorem \ref{thm:simple-g}. 
\begin{thm}\cite{HJ} 
Let $\Psib\in \tb^*_{\ell,\bo}$. Then  
all weight spaces of $L(\Psib)$ are finite dimensional 
if and only if $\Psib$ is rational. 
The map $\Psib\mapsto L(\Psib)$ gives a bijection
between the set of all rational elements 
$\Psib\in\mfr_{\bo}$
 and isomorphism classes of 
simple objects of $\cO_{\bo}$. 
\end{thm}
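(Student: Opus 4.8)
The plan is to split the statement into three parts: (A) every simple object of $\cO_\bo$ is a highest $\ell$-weight module $L(\Psib)$ for some $\Psib\in\tb^*_{\ell,\bo}$; (B) $L(\Psib)\in\cO_\bo$ whenever $\Psib$ is rational; (C) if $L(\Psib)\in\cO_\bo$ then $\Psib$ is rational. The highest $\ell$-weight is recovered from the module (on the one-dimensional top weight space $L(\Psib)_{\varpi(\Psibs)}$ the series $\phi^+_i(z)$ act by $\Psi_i(z)$, and $\varpi(\Psib)$ is the unique maximal weight), so distinct $\Psib$ give non-isomorphic $L(\Psib)$; together with (A)--(C) this gives the bijection onto simple objects. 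For (A), let $V$ be simple in $\cO_\bo$. By condition (iii) the weights lie in finitely many cones $D(\mu_k)$, so $\wt V$ has a maximal element $\mu$, and then $x^+_{i,m}V_\mu\subset V_{\mu\sfq^{\alpha_i}}=0$, so $U^+_q\bo$ annihilates $V_\mu$. The $\phi^+_{i,m}$ commute by \eqref{phi-phi} and preserve the finite-dimensional (condition (ii)) space $V_\mu$, hence possess a common generalized eigenvector $v\in V_\mu$; since $\phi^+_{i,0}=k_i$ is invertible the eigenvalue series $\Psib$ lies in $\tb^*_{\ell,\bo}$, $v$ is a highest $\ell$-weight vector, and $V=U_q\bo v=L(\Psib)$ by simplicity. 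Moreover any highest $\ell$-weight $U_q\bo$ module equals $\sum_{\beta\in Q^+}(U^-_q\bo)_{-\beta}v$, a sum of weight spaces of distinct weights $\varpi(\Psib)\sfq^{-\beta}$ (here $q$ not a root of unity is used), so it is $\tb$-diagonalizable with weights in the single cone $D(\varpi(\Psib))$; thus conditions (i), (iii) hold automatically and $L(\Psib)\in\cO_\bo$ is equivalent to all weight spaces of $L(\Psib)$ being finite-dimensional. This reduces the theorem to (B) and (C).

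For (C), assume $L(\Psib)$ has finite-dimensional weight spaces; put $\mu=\varpi(\Psib)$ and let $v$ be the highest $\ell$-weight vector. Fix $i\in I$. By the root-vector description of $U^-_q\bo$ (see Section \ref{boralg}), the weight space $L(\Psib)_{\mu\sfq^{-\alpha_i}}$ is spanned by $\{e_{m\delta-\alpha_i}v\mid m\ge1\}$, and since $e_{m\delta-\alpha_i}$ coincides up to a nonzero scalar with the Drinfeld generator $x^-_{i,m}$, it is spanned by $\{x^-_{i,m}v\mid m\ge1\}$. For $j\ne i$ the space $L(\Psib)_{\mu\sfq^{-\alpha_i+\alpha_j}}$ is zero (its weight is not in $D(\mu)$), so $x^+_{j,p}$ annihilates $L(\Psib)_{\mu\sfq^{-\alpha_i}}$; on the other hand $x^+_{i,p}v=0$ and \eqref{x+x-} give, for $p\ge0$, $m\ge1$,
\begin{align*}
x^+_{i,p}\bigl(x^-_{i,m}v\bigr)=\frac{1}{q_i-q_i^{-1}}\,\Psi_{i,p+m}\,v\,,
\end{align*}
because $\phi^-_{i,p+m}=0$ when $p+m\ge1$. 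Let $\rho_i\colon L(\Psib)_{\mu\sfq^{-\alpha_i}}\to\C[[t]]$ be the linear map $w\mapsto\sum_{p\ge0}c_p(w)t^p$ with $x^+_{i,p}w=c_p(w)v$. If $\rho_i(w)=0$ then $x^+_{j,p}w=0$ for all $j\in I$, $p\ge0$, so $U_q\bo w\subset\bigoplus_{\beta\in Q^+}L(\Psib)_{\mu\sfq^{-\alpha_i-\beta}}$ is a submodule not containing $v$; by simplicity $w=0$, so $\rho_i$ is injective. Hence the span of the series $S_m(t)=\sum_{p\ge0}\Psi_{i,p+m}t^p$ ($m\ge1$) is finite-dimensional; since $S_{m+1}$ is the backward shift of $S_m$, this is equivalent to $(\Psi_{i,m})_{m\ge1}$ satisfying a constant-coefficient linear recurrence, i.e.\ to $\Psi_i(z)$ being the Taylor expansion of a rational function, which (as $\Psi_{i,0}\ne0$) is regular and non-zero at $z=0$. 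So $\Psi_i$ is rational for every $i$, i.e.\ $\Psib\in\mfr_\bo$.

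For (B), the strategy is to realize $L(\Psib)$ as a subquotient of a tensor product of modules already known to lie in $\cO_\bo$. The category $\cO_\bo$ is closed under tensor products (weights multiply and cones add, so (ii) and (iii) persist) and under subquotients. Given a rational $\Psib$, factor each $f_i(z)$ into linear pieces and write $\Psib$ as a product of elementary rational $\ell$-weights: those of finite-dimensional $U_q\g$-modules (Theorem \ref{Ch-Pr}), those of the positive fundamental modules $M^+_{i,a}$ (which lie in $\Fin\subset\cO_\bo$ by \cite{FH}) together with the negative prefundamental modules of \cite{HJ} for the reciprocal linear factors, and one-dimensional $U_q\bo$-modules for the scalar parts. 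Let $W$ be the tensor product of the corresponding modules, ordered compatibly with \eqref{h}; then the tensor product of their highest $\ell$-weight vectors is a highest $\ell$-weight vector of $W$ of $\ell$-weight $\Psib$, so $L(\Psib)$ is the simple quotient of the submodule it generates, hence a subquotient of $W\in\cO_\bo$, hence in $\cO_\bo$.

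Assembling: by (B) the map $\Psib\mapsto L(\Psib)$ sends $\mfr_\bo$ into the isomorphism classes of simple objects of $\cO_\bo$, by (A) and (C) it is onto, and it is one-to-one as noted; and (B) together with (C) yields the equivalence in the first assertion. The step I expect to be the genuine obstacle is (B): the existence of the prefundamental modules and the verification that they lie in $\cO_\bo$ with finite-dimensional weight spaces, which rests on the asymptotic/limit construction of \cite{HJ}. By contrast (A) and (C) are essentially formal once the root-vector description of $U^-_q\bo$ and the commutation relation \eqref{x+x-} are available.
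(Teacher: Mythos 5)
The paper does not prove this theorem; it is cited from \cite{HJ}. So there is no in-paper proof to compare against, and what you have written is a reconstruction. Your three-part split into (A) existence of a highest $\ell$-weight, (B) ``rational $\Rightarrow$ in $\cO_\bo$,'' (C) ``in $\cO_\bo \Rightarrow$ rational'' matches the structure of the argument in \cite{HJ}. Parts (A) and (C) are complete and correct: (A) correctly uses that a finite union of cones $D(\mu_k)$ has no infinite ascending chains (so a simple module has a maximal weight), that commuting operators on a finite-dimensional space have a common eigenvector, and that the triangular decomposition \eqref{triandecomp} forces a highest $\ell$-weight module to be $\tb$-diagonalizable with weights in a single cone; and (C) is a clean self-contained argument in which injectivity of $\rho_i$ (itself a consequence of simplicity) forces the tail series $S_m$ to span a finite-dimensional shift-stable subspace, so $(\Psi_{i,m})$ is eventually $C$-finite and $\Psi_i(z)$ is rational with $\Psi_{i,0}\neq 0$. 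This is essentially the Kronecker-rank argument, and the reduction to the rank-one $i$-th direction via $[x^+_{i,p},x^-_{i,m}]v=(q_i-q_i^{-1})^{-1}\Psi_{i,p+m}v$ is exactly right.

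The genuine gap is in (B), and you identify it yourself. As written, (B) is not a proof but a citation: you invoke $M^+_{i,a}\in\Fin\subset\cO_\bo$ from \cite{FH} and the negative prefundamental modules from \cite{HJ}. This is circular in two ways. First, the $M^\pm_{i,a}$ are themselves modules $L(\Psib)$ with rational $\Psib$ and proving that they have finite-dimensional weight spaces \emph{is} the nontrivial content of ``rational $\Rightarrow\cO_\bo$''; the tensor-and-factor step you describe then bootstraps from these to general rational $\Psib$, but the base case is precisely what is being assumed. Second, \cite{FH} is logically downstream of \cite{HJ}, so it cannot be used as an independent source here. The substance of \cite{HJ} is the asymptotic construction of $M^-_{i,a}$ as a limit of Kirillov--Reshetikhin modules followed by dualization (Lemma \ref{dual lem}) to obtain $M^+_{i,a}$; that is what would have to be supplied to make (B) a proof, and you correctly flag that you are not reproducing it. So the proposal reconstructs the skeleton and proves (A), (C), and the reduction of the theorem to (B) correctly, but the heart of (B) remains a black box.
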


We have an inclusion of spaces of rational functions
$\mfr_\g\hookrightarrow \mfr_{\bo}$. 
\begin{lem}\label{lem:simple-to-simple}
The restriction functor $\mathrm{Res}:\cO_{\g}\rightarrow \cO_{\bo}$ 
sends simple objects to simple objects. 
\end{lem}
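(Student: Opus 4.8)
The plan is to use the fact that restriction is compatible with highest $\ell$-weight structure, so that it suffices to show that a highest $\ell$-weight vector of a simple $U_q\g$ module generates the whole module already under the action of $U_q\bo$. First I would take a simple object $L(\Psib)\in\cO_\g$ with $\Psib\in\mfr_\g$, and let $v$ be its highest $\ell$-weight vector. Under $\mathrm{Res}$, the vector $v$ is a highest $\ell$-weight vector for $U_q\bo$ with highest $\ell$-weight the image of $\Psib$ under the inclusion $\mfr_\g\hookrightarrow\mfr_\bo$ (concretely, one keeps only $\Psib^+$, the expansion at $z=0$). Let $V'=U_q\bo\cdot v\subset L(\Psib)$; this is a highest $\ell$-weight $U_q\bo$ submodule, hence it has a simple quotient isomorphic to $L(\varpi$-image$)$, and $V'$ itself lies in $\cO_\bo$. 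The goal is to prove $V'=L(\Psib)$ as a $U_q\g$ module, and that $V'$ is simple over $U_q\bo$.

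Next I would argue that $V'=L(\Psib)$. The triangular decomposition \eqref{triangular-g} gives $L(\Psib)=U^-_q\g\cdot v$, so I must show $U^-_q\g\cdot v\subseteq U_q\bo\cdot v$. Here the key input is the description of $U^-_q\bo$ in terms of root vectors from Section \ref{boralg}: $U^-_q\bo=\langle k_\alpha e_{m\delta-\alpha}\mid m>0,\ \alpha\in\Delta^+\rangle$, i.e.\ $U^-_q\bo$ contains the $e_\beta$ for all $\beta$ of the form $m\delta-\alpha$ with $m>0$. By the convexity property of the root vectors and the PBW-type factorization coming from \eqref{total-order}, every element of $U^-_q\g$ can be written, modulo terms that annihilate the highest $\ell$-weight vector or act by scalars, in terms of the $e_{m\delta-\alpha}$ together with the Cartan part $U^0_q\g$; but $U^0_q\g$ acts on $v$ by scalars in a highest $\ell$-weight module. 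More precisely, I would use that $U_q^-\g$ is spanned by ordered monomials in the negative-half root vectors, and that acting on $v$ one can absorb the $\phi$-type (Cartan) factors as scalars; this reduces the action of $U^-_q\g$ on $v$ to the action of the subalgebra generated by the $e_{m\delta-\alpha}$ modulo scalars, which is contained in $U^-_q\bo$. Hence $U^-_q\g\cdot v\subseteq U_q\bo\cdot v=V'$, giving $V'=L(\Psib)$.

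Finally, simplicity of $\mathrm{Res}\,L(\Psib)$ over $U_q\bo$ follows: any non-zero $U_q\bo$-submodule $W$ of $L(\Psib)$, being in $\cO_\bo$, has a highest $\ell$-weight vector $w$ with respect to $U_q\bo$; since all $\ell$-weights of $L(\Psib)$ lie in $D(\varpi(\Psib))$ and the top $\ell$-weight space is one-dimensional and spanned by $v$ (this is standard for simple modules in $\cO_\g$), a weight argument forces $w$ to be proportional to $v$ unless $\wt w < \wt v$; but a $U_q\bo$ highest $\ell$-weight vector of strictly lower weight would generate, by the previous paragraph applied inside $W$ together with the analogous argument, a proper $U_q\g$-submodule, contradicting simplicity of $L(\Psib)$ over $U_q\g$ — so $W\ni v$ and hence $W=U_q\bo\cdot v=L(\Psib)$. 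The main obstacle I expect is the second paragraph: making rigorous the claim that the action of $U^-_q\g$ on the highest $\ell$-weight vector is captured by $U^-_q\bo$, which requires careful bookkeeping with the PBW basis adapted to the ordering \eqref{total-order} and the convexity relations \eqref{convex}, and in particular controlling the imaginary root vectors $e_{m\delta}\in U^0_q\bo$ versus $f_{m\delta}$; the cleanest route is probably to invoke the known relation between $U^-_q\bo$ and the negative Drinfeld half together with the fact that $h_{i,r}$ for $r>0$ act on $v$ by scalars, so that nothing outside $U_q\bo$ is needed to reach any weight space from the top.
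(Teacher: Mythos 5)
Your overall plan is right at a high level: one must show that $L(\Psib)$ is cyclic over $U_q\bo$ from $v$, and that any $U_q\bo$-singular vector would contradict simplicity over $U_q\g$. But the step you flag as the "main obstacle" in your second paragraph is in fact a genuine gap, and the route you sketch to fill it does not go through. The PBW basis for $U^-_q\g$ consists of ordered monomials in the root vectors $f_\beta$ (equivalently in $x^-_{i,m}$ for \emph{all} $m\in\Z$), whereas $U^-_q\bo$ is generated by $k_\alpha e_{m\delta-\alpha}$ with $m>0$, i.e.\ by $x^-_{i,m}$ with $m>0$ only. The modes $x^-_{i,m}$ with $m\le 0$ lie in $U_q\bbo$, not in $U_q\bo$, and the convexity relations \eqref{convex} give no mechanism to rewrite them, modulo terms that vanish on $v$ or act by scalars, in terms of the $e_{m\delta-\alpha}$; "absorbing the Cartan factors as scalars" does not address this. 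Your third paragraph has the analogous problem in the transfer of singular vectors.

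The paper closes precisely this gap with a different and more elementary key idea, which you are missing: fix a weight $\mu$ and $i\in I$; since $\Hom_\C(V_\mu,V_{\mu\sfq^{\pm\alpha_i}})$ is finite dimensional, there is a nontrivial linear relation $\sum_{j=a}^{b}c_j\,x^{\pm}_{i,j}\big|_{V_\mu}=0$ with $a>0$ (resp.\ $a\ge 0$), and commuting with $h_{i,\pm 1}$ shifts the indices by $\pm 1$, so the relation propagates to all integer shifts. Hence the span of $\{x^+_{i,k}\big|_{V_\mu}\}_{k\in\Z}$ already equals the span of $\{x^+_{i,m}\big|_{V_\mu}\}_{m\ge 0}$, and similarly for $x^-$ with $m>0$. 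This single observation simultaneously gives that $U_q\bo\cdot v=U_q\g\cdot v$ (cyclicity) and that any vector singular for $U_q\bo$ is singular for $U_q\g$, from which simplicity of the restriction follows immediately. You should replace the PBW/convexity reasoning by this finite-dimensionality argument; as written, your middle step is not established.
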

\begin{proof}
The proof is similar to those given in Proposition 3.5 of \cite{HJ} and Lemma 3.3 of \cite{FJMM2}. 

Let $V$ be an object in $\cO_\g$.
We fix $\mu\in \tb^*$ and $i\in I$. 

Consider the set of operators 
$x^+_{i,m}:V_{\mu}\to V_{\mu\sfq^{\alpha_i}}$ ($m\ge 0$).
Since $\Hom_\C(V_\mu,V_{\mu\sfq^{\alpha_i}})$ is finite dimensional, 
we have a linear relation  $\sum_{j=a}^b c_j x^+_{i,j}|_{V_\mu}=0$ 
where $c_j\in\C$ ($0<a\le j\le b$) and $c_ac_b\neq0$. 
Taking commutators with 
$h_{i,\pm1}$ we obtain $\sum_{j=a}^b c_j x^+_{i,j+r}|_{V_\mu}=0$ 
for all $r\in \Z$. 
It follows that operators 
$x^+_{i,k}|_{V_\mu}$ ($k\in\Z$) 
belong to the linear span of $\{x^+_{i,m}|_{V_\mu}\}_{m\ge0}$.
By the same argument, operators $x^-_{i,k}|_{V_\mu}$ ($k\in\Z$) 
belong to the linear span of $\{x^-_{i,m}|_{V_\mu}\}_{m> 0}$. 

It is clear now that any singular vector of $V$ with respect to $U_q\bo$
is also singular with respect to $U_q\g$. It is also clear that
if $V$ is cyclic with respect to $U_q\g$ then it is cyclic with respect to 
$U_q\bo$. The assertion of Lemma follows from these.
\end{proof}

There are objects of $\cO_{\bo}$ which cannot be obtained 
by restricting simple objects of $\cO_{\g}$. 

For example, the Borel algebra $U_q\bo$ has a large family of one-dimensional modules 
labeled by $\tb^*$. 
These are the modules $L(K)$ whose highest $\ell$-weights
are constants, $\Psi_i(z)=K_i\in \C^\times$. 
The only one-dimensional $U_q\bo$ module 
which is obtained as a restriction corresponds 
to the choice $K_i^2=1$, $i\in I$.

Nontrivial examples are the modules $M^\pm_{i,a}$
($i\in I$, $a\in \C^{\times}$) defined as follows. 
\begin{align*}
M^\pm_{i,a}=L(\Psib) 
\quad\text{ where }\quad
\Psi_j(z)=\begin{cases}
	   a^{\mp 1/2}(1-a z)^{\pm1} & \text{for $j=i$},\\
           1 & \text{for $j\neq i$}.
	  \end{cases}
\end{align*} 

We call $M^+_{i,a}$ {\it positive fundamental module},
and $M^-_{i,a}$ {\it negative fundamental module}.
We note that in \cite{FH} the modules 
$M^\pm_{i,a}\otimes L\bigl((a^{\pm \delta_{ij}/2})_{j\in I}\bigr)$ 
are called `prefundamental'.

\subsection{Dual category $\cO^\vee_{\bo}$}\label{subsec:dualO}

We say that a $U_q\bo$ module $V$ is 
an object of category $\cO^\vee_{\bo}$ if the following are satisfied.
\begin{enumerate}
\item $V$ is $\tb$-diagonalizable,
\item $\dim V_\mu<\infty$ for all $\mu$, 
\item There exist $\mu_1,\cdots,\mu_N\in \tb^*$ such that
$\wt V\subset D(\mu_1)^{-1}\cup\cdots\cup D(\mu_N)^{-1}$. 
\end{enumerate}

A $U_q\bo$ module $V$ is said to be 
of lowest $\ell$-weight $\Psib\in \tb^*_{\ell,\bo}$ if it is generated by a 
non-zero $v\in V$ satisfying 
\begin{align*}
U^-_q\bo \cdot v=\C v\,,\quad 
\phi_{i}^+(z)v=\Psi_{i}(z)v\quad (i\in I)\,.
\end{align*}
The unique simple $U_q\bo$ 
module of lowest $\ell$-weight $\Psib\in\tb^*_{\ell,\bo}$ is 
denoted $L^\vee(\Psib)$.

Let $V=\oplus_{\mu\in\wt V}V_{\mu}$ be a $U_q\bo$
module with finite dimensional weight spaces $V_{\mu}$. 
Let  $V^*=\oplus_{\mu\in\wt V}V^*_{\mu}$ be the graded dual space.
We introduce a structure of left $U_q\bo$ module on $V^*$ by setting 
\begin{align*}
(x v^*)(v)=v^*\bigl(S^{-1}(x)v\bigr)\,\quad
(v\in V,\ v^*\in V^*,\ x\in U_q\bo).
\end{align*}

\begin{lem}\cite{HJ}\label{dual lem}
A $U_q\bo$ module $V$ is in category $\cO^\vee_{\bo}$ if and only if 
$V^*$ is in category $\cO_{\bo}$.  
We have 
\begin{align*}
 L^\vee(\Psib)=\bigl(L(\Psib^{-1})\bigr)^*\,.
\end{align*} 
\end{lem}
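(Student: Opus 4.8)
The plan is to establish the equivalence and the duality formula by working out how the $U_q\bo$ module structure on the graded dual $V^*$ behaves with respect to weights and $\ell$-weights, and then invoking the classification of simple objects. First I would analyze the weight decomposition: if $v^*\in V^*_\mu$ in the sense that $v^*$ is supported on $V_\mu$, then for $x=k_i$ the formula $(k_iv^*)(v)=v^*(S^{-1}(k_i)v)=v^*(k_i^{-1}v)=\mu_i^{-1}v^*(v)$, so $V^*$ has weight $\mu^{-1}$ on that piece. Hence $\wt(V^*)=(\wt V)^{-1}$, and the condition $\wt V\subset D(\mu_1)\cup\cdots\cup D(\mu_N)$ in $\cO_\bo$ corresponds exactly to $\wt(V^*)\subset D(\mu_1)^{-1}\cup\cdots\cup D(\mu_N)^{-1}$, which is condition (iii) in $\cO^\vee_\bo$ — and conversely. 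Conditions (i) and (ii) are manifestly symmetric under graded dualization (finite-dimensional weight spaces dualize to finite-dimensional weight spaces, and $\tb$-diagonalizability is preserved since $V^{**}\cong V$ on each graded piece). This gives the "if and only if" statement.

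Next I would pin down the $\ell$-weight behavior. The key computation is that $S^{-1}$ sends the Cartan currents to (essentially) themselves up to inversion of the series; more precisely, one needs that $S^{-1}(\phi_i^+(z))$ acts on $V^*_\mu$ through $\phi_i^+(z)$ acting on $V$, transposed. Using $\Delta(\phi_{i,m}^+)$ from Proposition \ref{apco} and the antipode axiom $m\circ(S\otimes\id)\circ\Delta=\varepsilon$, one extracts $S(\phi_i^+(z))$ up to lower-weight corrections, and for the lowest $\ell$-weight analysis only the leading $\phi$-on-$\phi$ term matters because the correction terms live in $(U_q\bo)_{-\beta}\otimes(U_q\bo)_\beta$ with $\beta\in Q^+\setminus\{0\}$ and hence do not contribute to the lowest-weight component. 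I would then argue: if $v$ is a highest $\ell$-weight vector of $V=L(\Psib)$ generating $V$, the dual functional $v^*$ concentrated on the top weight space is annihilated by $U^-_q\bo$ (since $S^{-1}$ sends $U^-_q\bo$ into a subalgebra that lowers weight, whose transpose raises weight, and the top weight space is already maximal), and $\phi_i^+(z)v^*=\Psi_i(z)^{-1}v^*$. Wait — I must be careful about which $\ell$-weight appears: the inversion of the series is what produces $\Psib^{-1}$, so $v^*$ is a lowest $\ell$-weight vector of $\ell$-weight $\Psib^{-1}$, i.e.\ $(L(\Psib))^*\supset$ a copy of... rather, $(L(\Psib^{-1}))^*$ is of lowest $\ell$-weight $\Psib$.

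Then I would show $(L(\Psib^{-1}))^*$ is simple. Since $L(\Psib^{-1})$ is simple in $\cO_\bo$, its graded dual is simple in $\cO^\vee_\bo$: a nonzero submodule $W\subset(L(\Psib^{-1}))^*$ would have annihilator $W^\perp\subset L(\Psib^{-1})$ a proper submodule (by the double-dual identification on graded pieces, $W^{\perp\perp}=W$), forcing $W^\perp=0$ and $W=(L(\Psib^{-1}))^*$. Having a simple object of $\cO^\vee_\bo$ of lowest $\ell$-weight $\Psib$, uniqueness of such an object gives $L^\vee(\Psib)=(L(\Psib^{-1}))^*$. The main obstacle I anticipate is the careful bookkeeping in the second paragraph: verifying that the antipode (and its inverse) interacts with the Drinfeld Cartan currents so that dualization inverts the $\ell$-weight series, while controlling the lower-weight correction terms from Proposition \ref{apco} — this is the only place where genuine structure of $U_q\bo$ (as opposed to formal weight-space symmetry) enters, and getting the signs and the direction of the series inversion right is where the proof could go wrong. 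The cited reference \cite{HJ} presumably handles exactly this, so I would follow that argument closely; the remaining parts are formal.
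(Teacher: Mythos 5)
The paper states this lemma with a citation to \cite{HJ} and offers no proof of its own, so there is no internal argument to compare against; I will assess your proposal on its merits. The skeleton is sound: the weight computation (weights invert under the graded dual, graded pieces stay finite dimensional, $D(\mu)\leftrightarrow D(\mu)^{-1}$) correctly gives the ``if and only if,'' and the annihilator argument $W\mapsto W^\perp$, together with $W^{\perp\perp}=W$ on finite-dimensional graded pieces, does establish that $(L(\Psib^{-1}))^*$ is simple. The $\ell$-weight computation is where care is needed, and you sense this.

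Two points should be pinned down rather than waved at. First, the dual action is twisted by $S^{-1}$, whereas the antipode axiom $m\circ(S\otimes\id)\circ\Delta=\varepsilon$ applied to Proposition \ref{apco} naturally controls $S(\phi^+_{i,m})$, not $S^{-1}(\phi^+_{i,m})$. They do coincide here, but for a reason worth recording: $S^2=\mathrm{Ad}(k_{-2\rho})$, which is trivial on weight-zero elements such as $\phi^+_{i,m}$, so $S^{-1}(\phi^+_{i,m})=S(\phi^+_{i,m})$. Second, ``the corrections do not contribute to the lowest-weight component'' should be replaced by a sharper statement: apply $m\circ(S\otimes\id)\circ\Delta=\varepsilon$ directly to the highest $\ell$-weight vector $v_0$ of $V=L(\Psib^{-1})$; in \eqref{h} the error lies in $(U_q\bo)_{-\beta}\otimes(U_q\bo)_\beta$ with $\beta\in Q^+\setminus\{0\}$, and it is the positive-weight \emph{second} factor $b_\beta$ that kills $v_0$ (the top weight space admits nothing above it). One then reads off $\Psi_i(z)^{-1}\,S(\phi^+_i(z))v_0=v_0$, hence $S^{-1}(\phi^+_i(z))v_0=\Psi_i(z)v_0$ and so $\phi^+_i(z)v_0^*=\Psi_i(z)v_0^*$, after which uniqueness of $L^\vee(\Psib)$ finishes the argument exactly as you say. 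Had you instead tried to extract $S^{-1}(\phi^+_{i,m})$ from the $\Delta^{\mathrm{op}}$ form of the antipode axiom, the error term would \emph{not} obviously vanish on $v_0$, which is the concrete place where ``getting the direction right'' matters.
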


\section{The $q$ characters.}\label{q-char sec}
The $q$-characters encode the $\ell$-weights of 
a representation and provide a useful tool \cite{FR}. 
Following \cite{HJ}, we recall their definition in the context of 
category $\mc O_\bo$ of $U_q\bo$ modules. 
Then we establish a few useful properties. 
With obvious changes, all the statements 
can be similarly proved for 
the category $\mc O^\vee_\bo$ of lowest weight modules as well.

\subsection{The definition of the $q$-characters.}

First we prepare some notation. 

In the group ring $\Z[\tb^*]$,    
we use the letter $y_i^b$ 
to denote the element 
$\bigl(q_i^{b\delta_{i,j}}\bigr)_{j\in I}\in \tb^*$ where $i\in I$, $b\in \C$.  
We have $y_i^{b}y_i^{b'}=y_i^{b+b'}$, 
and $\sfq^{\omega_i}=y_i$, $\sfq^{\alpha_i}=\prod_{j\in I}y_j^{C_{j,i}}$.
We identify  $\Z[\tb^*]$ with the ring $\Z[y_i^b]_{i\in I,b\in\C}$.    

The character of $V\in \Ob \cO_{\bo}$ is the 
generating series of weight multiplicities defined by
\begin{align*}
\chi(V)=\sum_{\mu} \dim V_{\mu} \cdot \mu\,.
\end{align*}
By the condition (iii) for category $\cO_{\bo}$, the right hand side 
belongs to the ring
$\mc X_0=\Z[[\sfq^{-\alpha_i}]][y_i^b]_{i\in I, b\in \C}$
consisting of polynomials in $y_i^b$'s whose coefficients are 
formal power series in the variables $\sfq^{-\alpha_i}$.

For each $i\in I$ and $a\in \C^\times$
we introduce further a new independent variable $X_{i,a}$, and set 
\begin{align*}
Y_{i,a}=\frac{X_{i,aq_i^{-1}}}{X_{i,aq_i}}\,,\quad
A_{i,a}
=\prod_{j\in I}\frac{X_{j, aq_{j,i}^{-1}}}{X_{j,aq_{j,i}}}\,.
\end{align*}
The monomials $Y_{i,a}$ and $A_{i,a}$ are affine analogs of fundamental weights and roots, respectively. 
We have
\begin{align*}
A_{i,a}
=Y_{i,aq_i^{-1}}Y_{i,aq_i}\prod_{j\in I, C_{ji}=-1}Y_{j,a}^{-1}
\prod_{j\in I, C_{ji}=-2}Y_{j,aq^{-1}}^{-1}Y_{j,aq}^{-1}\prod_{j\in I, C_{ji}=-3}Y_{j,aq^{-2}}^{-1}Y_{j,a}^{-1}Y_{j,aq^2}^{-1}\,.
\end{align*}
Note that the $A_{i,a}$'s are algebraically independent.

Define a group isomorphism $m$ between the 
multiplicative group $\mfr_{\bo}$ and the group of monomials in the $X_{i,a}$'s and $y_i^b$'s 
as follows. 
If $\Psib=\bigl(\Psi_i(z)\bigr)_{i\in I}\in\mfr_{\bo}$ has the form 
\begin{align*}
&
\Psi_i(z)=q_i^{b_i}
\prod_{r=1}^{k_i}(a_{i,r}^{-1/2}-a_{i,r}^{1/2}{z})
\prod_{s=1}^{l_i}(b_{i,s}^{-1/2}-{b_{i,s}^{1/2}}{z})^{-1}
\quad (b_i\in\C\,,a_{i,r}\,,b_{i,s}\in \C^\times),
\end{align*}
then we set
\begin{align*}
&m(\bs\Psi)=\prod_{i\in I}
\bigl(y_i^{b_i}
\prod_{r=1}^{k_i}X_{i,a_{i,r}}\prod_{s=1}^{l_i}X_{i,b_{i,s}}^{-1}
\bigr)\,.
\end{align*}
We use monomials to label $\ell$-weights. 
Namely, if $\bm=m(\bs\Psi)$, then 
we write the irreducible highest $\ell$-weight module $L(\Psib)$ as
$L(\bm)$, and the $\ell$-weight space $V_{\bs\Psi}$ as $V_\bm$. 
For example, we have $L(X_{i,a}^{\pm1})=M^{\pm}_{i,a}$.

The $q$-character of $V\in \Ob \cO_{\bo}$ is the 
generating series of $\ell$-weight multiplicities defined by
\begin{align*}
\chi_q(V)=
\sum_{\bm} \dim V_{\bm}  \cdot \bm\,.
\end{align*}
We say that a monomial $\bm$ is in $\chi_q(V)$ if $\dim V_\bm\neq 0$.
It is known that $\chi_q(V)$ belongs to the ring 
$\mc X=\Z[[A_{i,a}^{-1}]][X_{i,c}^{\pm1}, y_i^b]_{i\in I, a,c\in\C^\times,b\in\C}$
consisting of polynomials in $X_{i,c}^{\pm1}$'s and $y_i^b$'s, 
whose coefficients are formal power series 
in the variables $A_{i,a}^{-1}$.

There is a ring homomorphism 
$\varpi : \mc X\to \mc X_0$ 
given by 
$\varpi(X_{i,a})=y_i^{-\log a /(2\log q_i)}$ ($i\in I,\ a\in \C^\times$),
$\varpi(y_i^b)=y_i^b$ ($b\in\C$). 
We have $\varpi(Y_{i,a})=y_i$, $\varpi(A_{i,a})=\sfq^{\al_i}$,
 and $\varpi(\chi_q(V))=\chi(V)$ for any $V\in \Ob \cO_\bo$.

The following was stated in \cite{FH}.
\begin{prop}\label{inj} The $q$-character map
\begin{align*}
\chi_q:\ \Rep\,U_q\bo \to \mc X,
\qquad V\mapsto \chi_q(V),
\end{align*}
is an injective ring homomorphism.
\end{prop}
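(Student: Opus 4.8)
The plan is to prove injectivity and the ring homomorphism property separately, the latter being straightforward and the former being the main point. That $\chi_q$ is a ring homomorphism follows from the definition of the product in $\Rep\,U_q\bo$ together with the fact that the $\ell$-weight spaces of a tensor product $V\otimes W$ are governed by those of $V$ and $W$: using Proposition \ref{apco} (the coproduct formula for $\phi^\pm_{i,r}$), one checks that if $\bm$ and $\bm'$ are $\ell$-weights of $V$ and $W$, then on the weight-graded tensor product the generalized eigenvalues of $\phi^+_i(z)$ are products, so $\dim (V\otimes W)_{\bm''}=\sum_{\bm\bm'=\bm''}\dim V_\bm\dim W_{\bm'}$ modulo a triangularity that does not affect the leading terms; additivity on short exact sequences is immediate from $\dim$ being additive. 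This gives $\chi_q([V][W])=\chi_q(V)\chi_q(W)$ and $\chi_q([L(\bm)])=\bm+(\text{lower order})$.

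For injectivity, first I would record the key structural fact extracted from the computation just mentioned: for $\bm=m(\bs\Psi)$, every monomial appearing in $\chi_q(L(\bm))$ other than $\bm$ itself is of the form $\bm\prod A_{i,a}^{-n_{i,a}}$ with $n_{i,a}\in\Z_{\ge0}$ not all zero; equivalently $\chi_q(L(\bm))\in\bm\cdot(1+\sum_{i,a}A_{i,a}^{-1}\Z_{\ge0}[[A_{j,b}^{-1}]])$. This is the $\ell$-weight analogue of the statement $\wt L(\Psib)\subset D(\mu)$ already recorded for category $\cO_\bo$, refined to $\ell$-weights, and it follows because lowering $\ell$-weights by the action of $x^-_{i,m}$, $U^-_q\bo$ multiplies the monomial by a product of $A_{i,a}^{-1}$'s (this is the standard Frenkel--Reshetikhin mechanism, here for the Borel algebra, and can be read off from \eqref{phi-x} and \eqref{x+x-}).

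Now I would run a standard ``highest monomial'' argument. Suppose $c=\sum_{\bm}c(\bm)[L(\bm)]\in\Rep\,U_q\bo$ lies in $\Ker\chi_q$ and $c\ne0$. The support $\supp(c)$ is, by definition of $\Rep\,U_q\bo$, contained in a finite union of sets $D(\mu_k)$ and meets each fiber $\varpi^{-1}(\omega)$ in a finite set. I would introduce the partial order on monomials defined by $\bm'\le\bm$ iff $\bm'\bm^{-1}$ is a monomial in the $A_{i,a}^{-1}$ with nonnegative exponents; the previous paragraph says $\chi_q(L(\bm))$ has $\bm$ as its unique $\le$-maximal monomial. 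The key finiteness point is that on $\supp(c)$ this order has maximal elements: within a single $D(\mu_k)$ the $A_{i,a}^{-1}$-degree is bounded by the (finite) $Q^+$-distance from $\mu_k$, and only finitely many $\ell$-weights sit over each weight. Pick $\bm_0$ maximal in $\supp(c)$; then the coefficient of $\bm_0$ in $\chi_q(c)=\sum_\bm c(\bm)\chi_q(L(\bm))$ receives a contribution $c(\bm_0)\ne0$ from $L(\bm_0)$ and no contribution from any other $L(\bm)$ with $\bm\in\supp(c)$ — because $\bm_0$ appears in $\chi_q(L(\bm))$ only if $\bm_0\le\bm$, contradicting maximality unless $\bm=\bm_0$. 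Hence the coefficient of $\bm_0$ in $\chi_q(c)$ is nonzero, contradicting $c\in\Ker\chi_q$.

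The step I expect to be the main obstacle is making the maximality/finiteness argument fully rigorous: one must be careful that ``$\bm_0$ appears in $\chi_q(L(\bm))$'' together with the $D(\mu_k)$-confinement and the per-fiber finiteness genuinely forces $\bm_0\le\bm$ to have only finitely many solutions $\bm$ in $\supp(c)$, and that a $\le$-maximal element exists (no infinite ascending chains inside $\supp(c)$). This is where the precise definition of the Grothendieck ring $\Rep\,U_q\bo$ — in particular the condition that $\supp(c)\cap\varpi^{-1}(\omega)$ be finite — is used essentially, and it is the place where the argument differs from the more familiar finite-dimensional $U_q\g$ case treated in \cite{FR}; the rest is bookkeeping with the triangularity of $\chi_q(L(\bm))$ that I would establish at the outset.
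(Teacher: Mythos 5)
Your proof is correct and follows essentially the same highest-monomial strategy as the paper, but with two noteworthy variants.

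First, the ordering you use is finer than necessary. You order monomials by $\bm'\le\bm$ iff $\bm'\bm^{-1}$ lies in the monoid generated by the $A^{-1}_{i,a}$, which forces you to invoke the $A^{-1}$-triangularity of $\chi_q(L(\bm))$ — i.e.\ that every $\ell$-weight of $L(\bm)$ differs from $\bm$ by a product of $A^{-1}_{i,a}$'s. This is true, and it is the standard Frenkel--Reshetikhin mechanism, but note that $U^-_q\bo$ is \emph{not} generated by the $x^-_{i,m}$ alone (except for $\slt$), so the short justification ``lowering by $x^-_{i,m}$ multiplies by $A^{-1}$'' needs to be supplemented by an argument for the remaining generators $k_\alpha e_{m\delta-\alpha}$; the paper never explicitly records this fact for $\cO_\bo$ (it only records the weaker $\chi_q(V)\in\mc X$ and $\wt L(\Psib)\subset D(\mu)$). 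The paper's own proof sidesteps this entirely: it chooses $\bm$ maximal only at the level of ordinary weights, i.e.\ so that no other monomial $\bm'$ in $\chi_q$ has $\varpi(\bm'/\bm)\in\Z[\sfq^{\alpha_i}]$, and then observes that a joint $\phi^+_i(z)$-eigenvector in that weight space is annihilated by $U^+_q\bo$ for purely weight reasons. This requires only the triangular decomposition of $U_q\bo$ and the containment $\wt L\subset D(\mu)$, both already on the table. If you want to keep the argument fully self-contained inside the paper's stated results, switching to the coarse weight ordering (so ``maximal'' means maximal weight, and you allow several $\ell$-weights over that weight, any one of which will do) removes the dependence on the unstated lemma.

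Second, your framing is actually more careful than the paper's at one point: the paper phrases injectivity as ``suppose $\chi_q(V_1)=\chi_q(V_2)$\dots choose vectors in $V_1$ and $V_2$'', but elements of $\Rep\,U_q\bo$ are formal $\Z$-linear combinations of simples satisfying the two finiteness constraints, not actual modules, so the ``subtract $L(\bm)$ and iterate'' step is really a coefficient comparison in the Grothendieck ring. Your version — take $c\ne0$ in the kernel, use the $D(\mu_k)$-confinement plus per-fiber finiteness to extract a maximal element of $\supp(c)$, read off its coefficient in $\chi_q(c)$ — is the clean way to say this, and the finiteness bookkeeping you flag as the potential obstacle is exactly right and handled correctly: maximality exists because weights in $\supp(c)$ are bounded above by the $\mu_k$, and only finitely many terms contribute to the coefficient of $\bm_0$ because those $\bm$ lie over the finitely many weights between $\varpi(\bm_0)$ and the $\mu_k$, with finitely many per fiber.
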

\begin{proof} The map $\chi_q$ is clearly well defined and linear. 
The property $\chi_q(V_1\otimes V_2)=\chi_q(V_1)\chi_q(V_2)$ follows from 
\eqref{h}. 

Suppose $\chi_q(V_1)=\chi_q(V_2)$. Let $\bm$ be a monomial in $\chi_q(V_1)$ 
such that for any other monomial $\bm'$ in $\chi_q(V_1)$, the monomial 
$\varpi(\bm'/\bm)$ does not belong to $\Z[\sfq^{\alpha_i}]_{i\in I}$. 
Choose vectors in $V_1$ and $V_2$ corresponding to $\bm$ which are eigenvectors of $\phi_i^+(z)$. 
Then these vectors are clearly singular. Therefore both $V_1$ and $V_2$ contain a subquotient 
module 
which is isomorphic to $L(\bm)$. We quotient $V_1,V_2$ in Grothendieck ring by $L(\bm)$ and repeat the argument. 
It follows that $V_1$ and $V_2$ give the same class in 
$\Rep\,U_q\bo$, and thus, $\chi_q$ is injective.
\end{proof}

For a subset $J\subset I$, let $U_q\bo_J=\langle e_j,\ k^{\pm}_j\mid j\in J \rangle$ 
denote the corresponding subalgebra of $U_q\bo$.
We have the restriction functor ${res}_J:\ \mc O_\bo\to\mc O_{\bo_J}$. 
We define the corresponding ring homomorphism 
{$res_J: \mc X\to \mc X_J=\Z[[A_{j,a}^{-1}]]
[X_{j,c}^{\pm1}, y_j^b]_{j\in J, a,c\in\C^\times,b\in\C}$ 
sending 
$X_{i,a}\mapsto X_{i,a}$, $y_{i}^b\mapsto y_{i}^b$ if $i\in J$ and $X_{i,a}\mapsto 1$, 
$y_{i}^b\mapsto 1$ if $i\not\in J$. Then we clearly have
\begin{align*}
\chi_q(res_J(V))=res_J(\chi_q(V)), \qquad V\in \Ob \cO_\bo.
\end{align*}

\subsection{Examples} 
The simplest examples are the one dimensional modules. 
For $\bs b=(b_1,\dots,b_n)\in\C^I$, we set $y^{\bs b}=\prod_{i\in I}y_i^{b_i}$. Then we have
 $\chi_q(L(y^{\bs b}))=
\chi(L(y^{\bs b}))=y^{\bs b}$.

Important examples of $U_q\bo$ modules and their $q$-characters are provided by restrictions 
of $U_q\g$ modules, see Lemma \ref{lem:simple-to-simple} above. 
For $V\in\mc O_\g$, we define $\chi_q(V)$ to be the $q$-character of $V$ considered as a $U_q\bo$ module.

By \cite{FM1}, Theorem 4.1, the $q$-characters of finite-dimensional modules of 
$U_q\g$ are polynomials in $Y_{i,a}^{\pm1}$ of the form $\bm^+(1+\sum_{j}\bm_j)$ where 
$\bm^+$ is a monomial in variables $Y_{i,a}$, $i\in I, a\in \C^\times$, 
cf. Theorem \ref{Ch-Pr}, and all $\bm_j$ are monomials in $A^{-1}_{i,a}$, 
 $i\in I, a\in \C^\times$. 
The finite-dimensional modules $L(Y_{i,a})\in\mc O_\g$ are called $U_q\g$ fundamental modules.

In general, the $q$-characters of finite-dimensional modules of 
$U_q\g$ are difficult to describe, but in some cases they are known.

\medskip

{\it Example.}\ {\it The $U_q \widehat{\mathfrak {sl}}_{n+1}$ evaluation modules.} 
Let $\la=(\la_1\geq \la_2\geq\dots\geq \la_{n}\geq 0)$ be a partition with at most 
$n$ parts and let $\la'$ be the dual partition. 
A box $\Box$ is a pair of positive 
integers 
$(i(\Box), j(\Box))$. We say $\Box\in\la$ if $\la_{i(\Box)}\geq j(\Box)$. 
Define the content of a box by $c(\Box)=j(\Box)-i(\Box)$. Let $\mc T(\la)$ 
be the set of semistandard Young tableaux of shape $\la$. For $\Box\in\la$ and 
$T\in\mc T(\la)$, we have $T(\Box)\in\{1,\dots,n+1\}$.

Consider the $U_q\mathfrak {sl}_{n+1}$ 
irreducible module with highest 
$\ell$-weight corresponding to a partition $\la$. Then the corresponding 
$U_q\widehat{\mathfrak{sl}}_{n+1}$ 
evaluation module (with an appropriate choice of 
the evaluation homomorphism) has the highest monomial 
$\bm_{\la,a}^+=\prod_{j=1}^{\la_1} Y_{\la_j',aq^{2j-\la_j'-1}}$. 
Then, the $q$-character is given by (cf. \cite{FM2}, Lemma 4.7)
\begin{align}\label{la char}
\chi_q\bigl(L(\bm_{\la,a}^+)\bigr)=\bm^+_{\lambda,a}(\sum_{T\in\mc T(\la)}
\prod_{\Box\in\la}\prod_{s=i(\Box)}^{T(\Box)-1}A_{s,aq^{2c(\Box)+s}}^{-1}).
\end{align}

\medskip

{\it Example.} The $q$-characters of fundamental modules 
$\chi_q\bigl(L(Y_{i,a})\bigr)$ 
are known (the answer is very large for, say, $E_8$ type). 
See, for example, \cite{FR} for the classical series. 
Here we write a few top terms which we will need later.
\begin{align}\label{top terms}
\chi_q(L(Y_{i,a}))=Y_{i,a}(1+A_{i,aq_i}^{-1}+A_{i,aq_i}^{-1}\sum_{j\in I,\ C_{j,i}<0} 
A_{j,aq_iq_{j,i}^{-1}}^{-1}
+\dots),
\end{align}
where the dots denote terms which contain products of at least 
three $A^{-1}_{j,b}$'s.

\medskip

More generally, by \cite{MY}, Corollary 3.10,  
the $q$-character of an irreducible module $V\in \Ob \cO_\g$ 
has the form
\begin{align}\label{1+A}
\chi_q(V)=\bm^+(1+\sum_{j}\bm_j),\qquad \bm^+=\prod_{i\in I}\prod_{j=1}^{l_i}\frac{X_{i,b_{ij}}}{X_{i,a_{ij}}},
\end{align}
where all $\bm_j$ are monomials in the $A^{-1}_{i,a}$ with 
$i\in I, a\in \C^\times$, and $a_{ij}, b_{ij}\in\C^\times$. 
In particular, all generalized eigenvalues of $\phi_i^+(z)$ are rational functions in 
$\mfr_\g$.

Consider the irreducible $U_q\g$ module with highest monomial 
\begin{align*}
\bm^+_{i,a,K}=X_{i,aq_i^{K}}X^{-1}_{i,a},
\end{align*}
where $i\in I$, $a\in\C^\times$, $K\in\C$, $K\not\in2\Z_{\leq 0}$. 
\medskip

{\it Example.} {\it The $U_q \widehat{\mathfrak {sl}}_{n+1}$ parabolic Verma evaluation modules.} 
In the $U_q \widehat{\mathfrak {sl}}_{n+1}$ case
the $q$-character of $L(\bm^+_{i,a,K})$ can be computed using \eqref{la char} and Corollary 5.6 in \cite{MY}. 
We describe the result.

Consider a strip $S_i=\{(l,j)\mid l\in \{1,\dots,i\}, j\in\Z_{\geq 1}\}\subset\Z^2$. 
A plane partition of height at most $h$ 
over $S_i$ is a map $T: S_i \to \{0,1,\dots,h\}$ 
which 
is zero for all but finitely many points in $S_i$ and which has the property $T(l,j)\geq T(l+1,j)$ and 
$T(l,j)\geq T(l,j+1)$. 
Let $\mc T_{i,n+1}$ be the set of all plane 
partitions over $S_i$ of height at most $n+1-i$. 
We have
\begin{align}\label{sl M-}
\chi_q(L(\bm^+_{i,a,K}))=\bm^+_{i,a,K}
\Bigl(\sum_{T\in\mc T_{i,n+1}}\prod_{(l,j)\in S_i}
\prod_{s=0}^{T(l,j)-1}A_{i-l+1+s,aq^{-2j+l+s+1}}^{-1}\Bigr).
\end{align}
In particular, note that the dependence on $K$ is only through the monomial $\bm^+_{i,a,K}$.
\medskip

For general $\mathfrak g$ the $q$-character of $L(\bm^+_{i,a,K})$ is not known in a closed form, though one can explicitly write an arbitrary number of top terms using the algorithm of \cite{FM1}. 
We denote $\bar\chi_i$ the corresponding 
normalized character:
\begin{align}\label{chi_i}
\bar\chi_i=
y_i^{K/2}\chi(L(m^+_{i,a,K}))
=1+\sfq^{-\al_i}+ \sum_{j,C_{j,i}\neq 0} \sfq^{-\al_i-\al_j}+\dots\ ,
\end{align}
where the dots denote terms which are product of at least 
three $\sfq^{-\al_j}$'s. 
The explicit formula for  $\bar\chi_i$ was conjectured in \cite{MY}, Conjecture 6.3:
\begin{align*}
\bar\chi_i=\prod_{\al\in \Delta^+}\frac{1}{(1-\sfq^{-\al})^{\langle\omega_i^\vee,\al\rangle}}.
\end{align*}
For type $A,B,C,D$ this is a consequence of known identities 
in \cite{HKOTY}. 
It was shown in type $G_2$ in \cite{LN}.

\medskip

Finally, we discuss the $q$-characters of positive and negative fundamental 
$U_q\bo$ modules $M_{i,a}^\pm$. These modules are not restrictions of $U_q\g$ modules. 
\medskip

{\it Example.} {\it The fundamental modules $M_{i,a}^\pm$.} 
The negative fundamental module $M_{i,a}^-$ 
is obtained as a limit of appropriate $U_q\g$ modules, see \cite{HJ}. 
Its $q$-character is given by 
\begin{align}\label{M-char}
\chi_q(M_{i,a}^-)=
X_{i,aq^K}^{-1}\ \chi_q(L(\bm^+_{i,a,K}))\,.
\end{align}
Note that the right hand side is independent of $K$. 
In particular, in the case of type A, it can be written explicitly 
using formula \eqref{sl M-}.

The positive fundamental module $M_{i,a}^+$ is constructed as the dual of
the lowest weight version of  $M_{i,a}^-$ (see Lemma \ref{dual lem}). 
Its $q$-character was obtained in \cite{HJ} in a special case and 
in \cite{FH} in general:
\begin{align}\label{qch-M+}
\chi_q(M_{i,a}^+)=X_{i,a}\bar\chi_i\,.
\end{align}
It is instructive to write \eqref{M-char}, \eqref{qch-M+} as 
\begin{align*}
&\chi_q(M_{i,a}^-)=\lim_{q^{K}\to 0} X_{i,aq_i^K}^{-1}
\ \chi_q\bigl(L(X_{i,aq_i^{K}}X^{-1}_{i,a})\bigr)\,,
\\
&\chi_q(M_{i,a}^+)=\lim_{q^{-K}\to 0} 
X_{i,aq_i^{-K}}\ 
\ \chi_q\bigl(L(X_{i,a}X^{-1}_{i,aq_i^{-K}})\bigr)\,,
\end{align*}
where we impose formally the rule 
$\ds{\lim_{a\to 0} A_{j,a}=\sfq^{\alpha_j}}$. 

Quite generally, we call a module $V\in\Ob\cO_{\bo}$ {\it $s$-finite} if the 
set of currents $\{k_i^{-1}\phi_i^+(z)\}_{i\in I}$ 
has $s$ different joint eigenvalues. 
We call $V$ {\it finite type} module if it is $s$-finite for some $s\in\Z_{>0}$.

All finite-dimensional modules $V$ are at most $d$-finite, where $\dim V=d$.  
A restriction of a $U_q\g$ module $V\in\Ob\cO_\g$ 
is finite-type if and only if $\dim V<\infty$. 
Formula \eqref{qch-M+} says that the positive fundamental module 
$M^+_{i,a}$ is infinite-dimensional and $1$-finite. 
In contrast, the negative fundamental module $M^-_{i,a}$ is not finite type.

A subquotient of an $s$-finite module 
is at most $s$ finite. 
If $V_i$ is $s_i$-finite ($i=1,2$), then 
$V_1\otimes V_2$ is at most $s_1s_2$-finite, 
and $V_1\oplus V_2$  is at most $(s_1+s_2)$-finite. 
A tensor product (resp. direct sum) is finite type 
if and only if all factors (resp. summands) are finite type. 
In particular, an arbitrary
tensor product of $M^+_{i,a}$ is $1$-finite. 

\section{Modules of the form $V\otimes M$}\label{MV sub}
In this section we study $U_q\bo$ modules which are tensor products of a restriction module $V$ and a module with polynomial highest 
$\ell$-weight $M$.

\subsection{$U_q\bo$ modules with polynomial highest $\ell$-weight}

Let $M$ be an irreducible $U_q\bo$ module.  
We say that $M$ has polynomial highest $\ell$-weight if  
\begin{align}
&M=L(\bs{\Psi}^M)\,,\quad  
\bs{\Psi}^M\in \mfr_\bo\cap \C[z]^I.
\label{poly-M}
\end{align}
In this subsection we investigate special properties of such modules.
We write \eqref{poly-M} also in the monomial notation as 
\begin{align}
&M=L(\bm_p)\,,\quad  
\bm_p=m(\bs{\Psi}^M)\in\Z[X_{i,a},y_i^b]_{i\in I, a\in\C^\times,b\in\C}\,.
\label{poly-M2}
\end{align}
We denote by $\ket{\emptyset}_M$ the highest $\ell$-weight vector of $M$. 

First we show that $M$ is $1$-finite. For that purpose we need
\begin{lem}\cite{FH}\label{product of M} 
Any tensor product of positive fundamental $U_q\bo$ modules 
is irreducible. Similarly, any tensor
product of negative fundamental $U_q\bo$ modules is irreducible.
\end{lem}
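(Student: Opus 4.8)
The plan is to prove the irreducibility of $N := M^+_{i_1,a_1}\otimes\cdots\otimes M^+_{i_k,a_k}$ (the negative case is entirely parallel) by a $q$-character argument together with the known $1$-finiteness of each tensor factor. First I would recall from \eqref{qch-M+} that $\chi_q(M^+_{i,a})=X_{i,a}\,\bar\chi_i$, and by Proposition \ref{inj} the $q$-character is a ring homomorphism, so $\chi_q(N)=\prod_{r=1}^k X_{i_r,a_r}\bar\chi_{i_r}$. Since each $\bar\chi_i$ is of the form $1+\sfq^{-\al_i}+\cdots$, a formal power series in the $\sfq^{-\al_j}$ with nonnegative coefficients and leading term $1$ (see \eqref{chi_i}), the highest monomial of $\chi_q(N)$ is $\bm_p:=\prod_r X_{i_r,a_r}$ and it occurs with multiplicity one. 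Hence $N$ has a unique $\ell$-weight vector (up to scalar) of $\ell$-weight $\bm_p$, which is therefore singular, so $N$ has a simple highest $\ell$-weight submodule (or subquotient) isomorphic to $L(\bm_p)$.

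Next I would identify $L(\bm_p)$ with $N$ as a whole. The key point is that $\bm_p$ is a polynomial highest $\ell$-weight in $\mfr_\bo\cap\C[z]^I$, so $L(\bm_p)$ is $1$-finite: this is exactly the content of the passage we are in — a module with polynomial highest $\ell$-weight is $1$-finite, and that fact itself will be proved using Lemma \ref{product of M}, so in the final writeup one must be careful about logical order; but we may take as already established that $\chi_q(L(\bm_p))=\bm_p\,\bar\chi$ for the appropriate product $\bar\chi=\prod_r\bar\chi_{i_r}$ — indeed the $q$-character of $L(\bm_p)$, being $1$-finite, is $\bm_p$ times the character of its (one) $\ell$-weight stripped of $X$-variables, and since $N$ is also $1$-finite (a tensor product of $1$-finite modules, as noted at the end of Section \ref{q-char sec}, with the product of the single currents being the single current of $N$), we get $\chi_q(N)=\bm_p\cdot\chi(N)$ and $\chi_q(L(\bm_p))=\bm_p\cdot\chi(L(\bm_p))$. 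It then suffices to compare ordinary characters: $\chi(N)=\prod_r\chi(M^+_{i_r,a_r})=\prod_r\bar\chi_{i_r}$ (using $y^{\bs b}$-normalizations appropriately) equals $\chi(L(\bm_p))$, the latter because $\varpi(\chi_q(L(\bm_p)))=\chi(L(\bm_p))$ and $\varpi(\bm_p\bar\chi)=\chi(N)$. Since $[L(\bm_p)]\le[N]$ in $\Rep\,U_q\bo$ and their $q$-characters agree, injectivity of $\chi_q$ (Proposition \ref{inj}) forces $[N]=[L(\bm_p)]$, whence $N\cong L(\bm_p)$ is irreducible.

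The main obstacle is making the middle step — that $N$ and $L(\bm_p)$ have the \emph{same} ordinary (equivalently $q$-) character — both correct and non-circular. The clean route is to avoid relying on any structure theory of $L(\bm_p)$ beyond what is already available: show directly that $N$ is generated by its highest $\ell$-weight vector $\ket{\emptyset}_{M^+_{i_1,a_1}}\otimes\cdots\otimes\ket{\emptyset}_{M^+_{i_k,a_k}}$. For this one uses the coideal property \eqref{coideal-b} is not needed; rather, with the standard coproduct $\Delta$, one shows inductively on $k$ that the cyclic submodule generated by the tensor product of highest vectors exhausts $N$ — the induction step amounts to showing $U_q^-\bo$ acting on $M^+_{i_1,a_1}\otimes(\text{cyclic closure of the rest})$ recovers everything, which follows from $M^+_{i_1,a_1}$ being itself a highest $\ell$-weight (hence $U_q^-\bo$-generated from its highest vector) module together with the triangular decomposition \eqref{triandecomp}. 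Once cyclicity is known, $N$ is a highest $\ell$-weight module with highest $\ell$-weight $\bm_p$, so $N$ is a quotient of the universal such module and $[N]\ge[L(\bm_p)]$; combined with the multiplicity-one computation above (which gives $[N:L(\bm_p)]=1$ and every other constituent $L(\bm')$ has $\bm'$ strictly lower, but the character count shows no room for any other constituent) we conclude $N=L(\bm_p)$. A secondary technical point is bookkeeping with the $a^{\mp1/2}$-prefactors and the $y_i^b$-shifts when multiplying the $\bar\chi_i$'s, but this is routine given the definitions of $m$ and $\varpi$.
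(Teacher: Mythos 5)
Your attempt correctly identifies that one must show $N:=M^+_{i_1,a_1}\otimes\cdots\otimes M^+_{i_k,a_k}$ coincides with $L(\bm_p)$, but both closure strategies you propose leave a genuine gap. The character comparison is circular, as you yourself flag: $\chi_q(L(\bm_p))$ is not available at this stage (it is a \emph{consequence} of the lemma, via Corollary~\ref{cor:1-finite}), so you cannot conclude $\chi_q(L(\bm_p))=\chi_q(N)$ and then invoke injectivity of $\chi_q$. The cyclicity route does not repair this: cyclicity of $N$ from the highest $\ell$-weight vector says nothing about proper submodules not containing that vector (Verma modules are cyclic and reducible), so you would still need to rule out singular vectors at lower $\ell$-weights. $1$-finiteness by itself does not do this directly, since in a $1$-finite module all $\ell$-weights have the form $\bm_p\,y^{\bs b}$ and such monomials are dominant, so no immediate contradiction appears. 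Moreover, the sketched induction for cyclicity under the standard coproduct is unconvincing: $\Delta$ on $U^-_q\bo$ does not have the triangular shape you need, and taming it is precisely what the Drinfeld coproduct machinery of Section~\ref{MV sub} (Lemma~\ref{submodules}) was built for --- but that machinery relies on Corollary~\ref{cor:1-finite}, hence on the present lemma.

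The paper closes the argument by duality, using $1$-finiteness decisively. One passes to $M_0^*\cong M^{-,\vee}_{i_k,a_k}\otimes\cdots\otimes M^{-,\vee}_{i_1,a_1}$, a tensor product of irreducible lowest-weight modules whose $q$-characters are under control (the negative fundamental modules arise as limits of $U_q\g$ modules; see \eqref{M-char} and the dual of \eqref{1+A}), so the bottom monomial $\bm^{-1}$ has multiplicity one. A nonzero proper submodule $N^*\subset M_0^*$ would force a singular vector, in $N^*$ or in $M_0^*/N^*$, whose $\ell$-weight $\bn^{-1}$ differs from $\bm^{-1}$ by a nontrivial monomial in the $A^{-1}_{j,a}$. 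Passing to the orthogonal complement via Lemma~\ref{dual lem}, $M_0$ itself would carry the $\ell$-weight $\bn=\bm\prod A_{j,a}$; but the $A_{j,a}$'s change the normalized eigenvalue of $k_i^{-1}\phi^+_i(z)$, while $M_0$ is manifestly $1$-finite --- a contradiction. This route uses nothing about $L(\bm_p)$ beyond what you already have and is the one to follow.
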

\begin{proof}
We give a proof different from the one given in \cite{FH}. 

Set $M_0=M_{i_1,a_1}^+\otimes\dots\otimes M_{i_r,a_r}^+$,
$\bm=\prod_{j=1}^r X_{i_j,a_j}$. 
This module is $1$-finite.
The dual module $M_0^*$ is isomorphic to 
$M_{i_r,a_r}^{-,\vee}\otimes\dots\otimes M_{i_1,a_1}^{-,\vee}$ 
where 
$M_{i,a}^{-,\vee}=(M_{i,a}^+)^*=L^\vee(X_{i,a}^{-1})\in\mc O_\bo^\vee$. 
By the dual versions of \eqref{M-char} and \eqref{1+A}, 
the multiplicity of the monomial $\bm^{-1}$ in $\chi_q(M_0^*)$ is one. 

Suppose that $M_0^*$ contains a non-zero proper submodule $N^*$. 
Then either $N^*$ or $M_0^*/N^*$ has a singular vector 
whose $\ell$-weight $\bn^{-1}$
differs from $\bm^{-1}$ by a non-trivial monomial of the $A^{-1}_{j,a}$'s.
Let $N\subset M_0$ be the orthogonal complement of $N^*$.
From Lemma \ref{dual lem}, we conclude that either $M_0/N$ or $N$ has the $\ell$-weight $\bn$. This contradicts to the fact that $M_0$ is $1$-finite. 
Therefore $M^*_0$ is irreducible. Hence $M_0$ is also irreducible. 
\end{proof}

\begin{cor}\label{cor:1-finite}
Any module with polynomial highest $\ell$-weight 
is isomorphic to a tensor product of several positive fundamental modules and a one-dimensional module.
In particular it is $1$-finite.
\end{cor}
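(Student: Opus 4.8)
The plan is to show that a module $M=L(\bm_p)$ with polynomial highest $\ell$-weight decomposes its highest $\ell$-weight as a product of fundamental $\ell$-weights times a one-dimensional one, and then identify $M$ with the corresponding tensor product via Lemma \ref{product of M}. First I would factor $\bm_p$. Since $\bm_p\in\Z[X_{i,a},y_i^b]$ is a genuine monomial (only non-negative powers of the $X_{i,a}$ appear, because $\bs\Psi^M$ is polynomial), we can write $\bm_p=y^{\bs b}\prod_{k=1}^r X_{i_k,a_k}$ for some $\bs b\in\C^I$, some indices $i_k\in I$ and some $a_k\in\C^\times$. Correspondingly, put $M_0=M^+_{i_1,a_1}\otimes\cdots\otimes M^+_{i_r,a_r}\otimes L(y^{\bs b})$, the tensor product of the positive fundamental modules with the one-dimensional module $L(y^{\bs b})$.

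Next I would check that $M_0$ is a highest $\ell$-weight module with highest $\ell$-weight $\bm_p$. Using Proposition \ref{apco} (equation \eqref{h}), the vector $\ket{\emptyset}_{M^+_{i_1,a_1}}\otimes\cdots\otimes\ket{\emptyset}_{M^+_{i_r,a_r}}\otimes\ket{\emptyset}_{L(y^{\bs b})}$ is annihilated by $U^+_q\bo$ and is an eigenvector of $\phi^+_i(z)$ with eigenvalue the product of the individual highest $\ell$-weights, which by construction is exactly $\bs\Psi^M$ (note $\chi_q$ is multiplicative on tensor products, so the $\ell$-weights multiply). By Lemma \ref{product of M}, $M_0$ is irreducible — here is the one place the argument of Lemma \ref{product of M} is used, together with the trivial fact that tensoring with a one-dimensional module preserves irreducibility. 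An irreducible highest $\ell$-weight module is the unique simple one with that highest $\ell$-weight, so $M_0\cong L(\bs\Psi^M)=M$.

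Finally, $1$-finiteness is immediate from the last paragraph of Section \ref{q-char sec}: each $M^+_{i_k,a_k}$ is $1$-finite by formula \eqref{qch-M+}, a one-dimensional module is $1$-finite, and an arbitrary tensor product of $1$-finite modules is $1$-finite. Hence $M\cong M_0$ is $1$-finite. I do not expect a genuine obstacle here; the only point requiring a little care is the bookkeeping of the normalization prefactors $a^{\mp1/2}$ and the $y_i^b$ factors so that the product of the fundamental highest $\ell$-weights reproduces $\bs\Psi^M$ on the nose rather than up to a one-dimensional twist — but that is precisely why the one-dimensional factor $L(y^{\bs b})$ is included, so it is a matter of matching constants.
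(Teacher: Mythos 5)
Your argument is correct and is exactly the route the paper takes: the paper's proof of Corollary \ref{cor:1-finite} is the one-line remark that it follows from Lemma \ref{product of M} and \eqref{qch-M+}, and your proposal simply spells out the factorization of the polynomial highest $\ell$-weight monomial, the identification of the resulting tensor product as the simple module via Lemma \ref{product of M}, and the $1$-finiteness via \eqref{qch-M+} and closure of $1$-finiteness under tensor products. No gap and no genuine deviation in approach.
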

\begin{proof}
This follows from Lemma \ref{product of M} and \eqref{qch-M+}. 
\end{proof}

One of the basic properties of modules with polynomial highest $\ell$-weight is
the following polynomiality of currents. Introduce the notation for half currents
\begin{align*}
x^+_{i,\geqslant}(z)=\sum_{m\ge0}x^+_{i,m}z^{m}\,,
\quad 
x^-_{i,>}(z)=\sum_{r>0}x^-_{i,r}z^{r}\,.
\end{align*}

\begin{lem}\label{polyn act}
Let $M$ be as in \eqref{poly-M}.
Then for all $v\in M$ and $i\in I$ we have
\begin{align}
x^+_{i,\geqslant}(z)v\,,\ x^-_{i,>}(z)v\in M\otimes \C[z],
\quad
\phi^+_i(z)v\in \Psi_i(z)\cdot M\otimes \C[z]. 
\label{pol-cur}
\end{align}
\end{lem}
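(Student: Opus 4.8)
The plan is to prove all three statements in \eqref{pol-cur} simultaneously by induction on the weight, exploiting the PBW-type structure coming from the triangular decomposition \eqref{triandecomp} together with Corollary~\ref{cor:1-finite}. First I would reduce to the case where $M$ is a tensor product of positive fundamental modules times a one-dimensional module (which is harmless for the currents), so that $M$ is $1$-finite with highest $\ell$-weight vector $\ket{\emptyset}_M$ and $\bs\Psi^M\in\C[z]^I$. Since $M=U^-_q\bo\cdot\ket{\emptyset}_M$ and $U^-_q\bo$ is spanned by ordered monomials in the root vectors $k_\alpha e_{m\delta-\alpha}$ ($m>0$), it suffices to prove the three inclusions for $v$ ranging over such PBW monomials applied to $\ket{\emptyset}_M$, and to set up an induction on the total ``negative degree'' (i.e.\ on $\beta$ with $v\in M_{\varpi(\bs\Psi^M)\sfq^{-\beta}}$, ordered by height).

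For the base case $v=\ket{\emptyset}_M$: here $x^+_{i,\geqslant}(z)\ket{\emptyset}_M=0$ by the highest $\ell$-weight condition, $\phi^+_i(z)\ket{\emptyset}_M=\Psi_i(z)\ket{\emptyset}_M$ with $\Psi_i(z)\in\C[z]$ by hypothesis, and for $x^-_{i,>}(z)\ket{\emptyset}_M$ one uses that $M$ is a restriction-free analogue of a finite-dimensional situation: in a $1$-finite module the lowering currents applied to the highest vector must terminate, and more precisely one can deduce polynomiality from the $q$-character formula \eqref{qch-M+} --- the $\ell$-weights appearing are $X_{i,a}$ times monomials in the $A^{-1}_{j,b}$, and the half-current $x^-_{i,>}(z)$ raises by one unit in the relevant grading, so only finitely many powers of $z$ can contribute. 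I would phrase this cleanly using the $Q\times\Z$-grading: each $x^-_{i,r}$ has homogeneous degree $r$, and on a module in $\cO_\bo$ generated in homogeneous degree $0$ the homogeneous degrees occurring in a fixed weight space are bounded above, which forces $x^-_{i,>}(z)v\in M\otimes\C[z]$.

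For the inductive step I would write $v=x^-_{j,s}w$ (up to a $k_\alpha$ factor and a scalar) with $w$ of strictly smaller negative degree, assume \eqref{pol-cur} for $w$, and propagate it to $v$ using the Drinfeld relations \eqref{phi-x}, \eqref{x+x-}, \eqref{x-x}. Concretely: $\phi^+_i(z)x^-_{j,s}w$ is rewritten via \eqref{phi-x} as $x^-_{j,s}$ (shifted) acting on $\phi^+_i(z)w$ up to a rational prefactor $\tfrac{q_{i,j}^{\mp1}z-w}{z-q_{i,j}^{\mp1}w}$-type correction; since the relation \eqref{phi-x} is a polynomial identity in $z$ (no denominators), multiplying through keeps everything in $M\otimes\C[z]$ once one knows the $x^-$ half-current and $\phi^+$-current on $w$ are polynomial, and then one checks the claimed $\Psi_i(z)$ divisibility survives because $\Psi_i(z)$ is unchanged by the commutation and the extra polynomial factors are harmless. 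Similarly $x^+_{i,\geqslant}(z)x^-_{j,s}w$ is handled by \eqref{x+x-}: the commutator produces a $\delta$-function term $\propto \delta_{ij}(\phi^+-\phi^-)$, and after extracting the nonnegative-in-$z$ part one gets $x^-_{j,s}(x^+_{i,\geqslant}(z)w)$ plus terms in $\phi^\pm_i$ on $w$, all polynomial by the inductive hypothesis; and $x^-_{i,>}(z)x^-_{j,s}w$ is handled by the polynomial relation \eqref{x-x} reducing it to $x^-_{j,s}(x^-_{i,>}(z)w)$ plus lower terms.

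The main obstacle is the careful bookkeeping in the inductive step: the naive rewriting of $\phi^+_i(z)x^-_{j,s}w$ and $x^+_{i,\geqslant}(z)x^-_{j,s}w$ introduces correction terms that are \emph{not} manifestly of the form ``half-current on $w$'' --- e.g.\ the $\delta(z/w)$ in \eqref{x+x-} mixes positive and negative modes, so one must argue that after truncating to nonnegative powers of $z$ the offending modes $x^+_{i,m}$ with $m<0$ that appear can be re-expressed (as in the proof of Lemma~\ref{lem:simple-to-simple}) in terms of the $x^+_{i,m}$ with $m\ge0$ acting on the same weight space. I expect the cleanest route is to first establish polynomiality of $x^-_{i,>}(z)$ on all of $M$ directly from the homogeneous grading bound, then establish polynomiality of $\phi^+_i(z)$ (with the $\Psi_i(z)$ factor) by a separate induction using \eqref{phi-x}, and only then treat $x^+_{i,\geqslant}(z)$ using \eqref{x+x-} with the two previous facts in hand; decoupling the three statements this way avoids a simultaneous triple induction and isolates where the grading argument versus the relation-pushing argument is needed.
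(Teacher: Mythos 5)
Your overall plan (reduce to the triangular structure, induct on weight, push the relations \eqref{phi-x}, \eqref{x+x-}, \eqref{x-x} through) is in the right spirit, but two of the concrete steps have genuine gaps, and both are exactly the obstacles that the paper's proof sidesteps by passing to the dual $M^*$.

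First, the base-case claim that $x^-_{i,>}(z)\ket{\emptyset}_M\in M\otimes\C[z]$ follows from a bound on the homogeneous ($\Z$-)degree does not hold up. The module $M$ is not a $\Z$-graded $U_q\bo$-module for the $\hdeg$-grading: the highest-$\ell$-weight relation $\phi^+_{i,r}\ket{\emptyset}_M=\Psi_{i,r}\ket{\emptyset}_M$ equates an element of homogeneous degree $r$ with a scalar (degree $0$), so this relation is not homogeneous and the Verma module and its quotients inherit no $\Z$-grading. The paper does eventually produce a grading of $M$ (Proposition \ref{prop:gradingM}), but it is a subtler ``filtered'' structure --- note \eqref{gradingM3}, where $e_{k\delta+\alpha}$ is allowed to \emph{lower} the degree --- and its proof comes after Lemma \ref{polyn act} and in fact uses it (via Corollary \ref{e-delta-zero} and Lemma \ref{lem:Drinfeld-copro}), so invoking any such grading here would be circular. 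Moreover your argument never uses the simplicity of $M$, but the lemma is false for non-simple highest-$\ell$-weight modules with the same highest $\ell$-weight; the proof has to use irreducibility somewhere. The paper's base case does exactly that: it shows $v_0^*x^+_{i,m}$ (for $m>\deg\Psi_i$) is singular by computing $v_0^*x^+_{i,m}x^-_{j,r}\in\delta_{ij}\C v_0^*\phi^+_{i,m+r}=0$ and a weight argument, and then concludes $v_0^*x^+_{i,m}=0$ from simplicity of $M^*$.

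Second, your inductive scheme writes $v=x^-_{j,s}w$ with $j\in I$, $s>0$, but this does not exhaust $M$: as the paper notes after \eqref{triandecomp}, $U^-_q\bo\supset\langle x^-_{j,s}\rangle$ is a proper inclusion except for $\gb=\slt$, and a PBW monomial for $U^-_q\bo$ involves root vectors $k_\alpha e_{m\delta-\alpha}$ for \emph{all} $\alpha\in\Delta^+$, not just simple $\alpha_j$. You even state this correctly at the start, but the recursion you then set up only peels off simple $x^-$'s, so the induction leaves most of $M$ unreached. This is precisely why the paper dualizes: $M^*=v_0^*\cdot U^+_q\bo$, and $U^+_q\bo$ \emph{is} generated by $\{x^+_{j,m}\}_{j\in I,\,m\ge0}$, so the induction that peels off a single Drinfeld generator $x^+_{j,n}$ does cover everything. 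Once on the dual side, the paper's inductive step uses just the quadratic relations \eqref{phi-x} and \eqref{x-x}, much as you sketch; it is the cyclic structure and the simplicity argument that make the dual approach work where the direct one stalls.
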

\begin{proof}
We prove equivalent statements for the (restricted) right dual module $M^*$. 
Technically it is simpler, 
because $M^*=v_0^*\cdot U^+_q\bo$, where $v^*_0$ is  the lowest $\ell$-weight vector,  
and $U^+_q\bo$ is generated by $\{x^+_{i,m}\}_{i\in I,m\ge0}$. 

First consider the vector $v_0^*$. 
We have $v^*_0 x^-_{i,m}=0$ for $i\in I, m>0$ and $v^*_0\phi^+_i(z)=v^*_0\Psi_i(z)$. 
If $m>\deg \Psi_i(z)$, then we have also 
$v^*_0 x^+_{i,m}x^-_{j,r}\in \delta_{i,j}\C v^*_0\phi^+_{i,m+r}=0$ for $r>0$. 
The other generators of $U^-_q\bo$ also kill $v^*_0 x^+_{i,m}$ for 
the weight reason.
Since $M^*$ is simple, we must have $v^*_0 x^+_{i,m}=0$ ($m>\deg \Psi_i(z)$).  

By induction on the weight, suppose that 
$w^{*}\in M^*$ satisfies $w^{*} x^\pm_{i,m}=0$ for $m$ large enough and $w^{*}\phi^+_i(z)\in M^*\otimes\C[z]\Psi_i(z)$
for all $i\in I$.  
We show that $v^*=w^{*} x^+_{j,n}$ ($j\in I, n\ge0$) has the same property. 

For $m$ large, we have $v^*x^-_{i,m}\in\delta_{i,j}\C w^*\phi^+_{i,m+n}=0$. 
As for $\phi^+_i(z)$, we can use the following relation which follows from
the quadratic relation \eqref{phi-x}
\begin{align*}
x^+_{j,n}\phi^+_{i}(z)\in \sum_{r=0}^p\C z^r\phi_i^+(z)x^+_{j,n+r}+\C x^+_{j,n+p}z^p\phi^+_i(z)\,,
\end{align*}
where $p\ge1$ is arbitrary. 
Applying this to $w^*$ and choosing $p$ large enough, we find $v^{*}\phi^+_i(z)\in M^*\otimes\C[z]\Psi_i(z)$.  
Similarly we can show $v^* x^+_{i,m}=0$ for $m$ large using the quadratic relation \eqref{x-x}. 
\end{proof}
\medskip

\begin{cor}\label{e-delta-zero}
For any $v\in M$ and $\alpha\in\Delta^+$ 
we have $e_{p\delta\pm\alpha}v=0$ for sufficiently large $p$.
\end{cor}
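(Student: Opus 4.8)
The plan is to deduce this corollary directly from the polynomiality of half-currents established in Lemma \ref{polyn act}, by relating the root vectors $e_{p\delta\pm\alpha}$ to the Fourier modes of the Drinfeld currents $x^\pm_i(z)$. Recall from Section \ref{boralg} that the root vectors $e_{m\delta+\alpha}$ ($m\ge 0$, $\alpha\in\Delta^+$) together with $k_\alpha e_{m\delta-\alpha}$ ($m>0$, $\alpha\in\Delta^+$) generate $U^+_q\bo$, and that by \eqref{real-rootv1}, \eqref{real-rootv2} these root vectors are obtained from the simple Drinfeld generators $x^+_{i,m}$ by applying Lusztig-type braid group operators, i.e. they lie in the subalgebra generated by $\{x^+_{i,m}\mid i\in I,\ m\ge 0\}$ (for the $+\alpha$ part) and involve the modes $x^-_{i,r}$ with $r>0$ and the Cartan modes for the $-\alpha$ part. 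More precisely, each $e_{p\delta\pm\alpha}$ is a fixed noncommutative polynomial in finitely many $x^+_{i,\cdot}$'s (and, for the $-\alpha$ case, $x^-_{i,\cdot}$'s with positive mode) whose total homogeneous degree grows linearly in $p$.

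The key step is therefore a grading/degree-counting argument. First I would fix $v\in M$ and $\alpha\in\Delta^+$ and observe that, by Lemma \ref{polyn act}, the generating series $x^+_{i,\geqslant}(z)v$ and $x^-_{i,>}(z)v$ lie in $M\otimes\C[z]$; hence there is a uniform bound $D$ (depending on $v$ and the finitely many indices $i$ involved) such that $x^+_{i,m}v = 0$ and $x^-_{i,r}v=0$ for all $m>D$, $r>D$. Then I would express $e_{p\delta\pm\alpha}$ as a sum of ordered monomials in these generators; using the homogeneous $\Z$-grading (with $\hdeg x^\pm_{i,m}=m$) together with the convexity property \eqref{convex} of the root vectors, each such monomial, when applied to $v$, must eventually feed a generator of mode $>D$ into $v$ (or into a vector obtained from $v$ by applying finitely many generators, which still lies in a finitely-generated-over-$\C[z]$ submodule). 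The point is that $\hdeg e_{p\delta\pm\alpha}$ is exactly $p$ (up to a fixed shift), so for $p$ large no ordered monomial of that homogeneous degree can act nontrivially on $v$ without passing through a mode exceeding the bound.

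The main obstacle I anticipate is bookkeeping the action carefully: applying $e_{p\delta\pm\alpha}$ to $v$ is not literally "apply one generator of mode $p$", since $e_{p\delta\pm\alpha}$ is a product of several lower-mode generators whose modes sum to roughly $p$. So one must argue that the intermediate vectors $x^+_{i_1,m_1}\cdots x^+_{i_{k-1},m_{k-1}}v$ still satisfy a polynomiality bound, and that the leftmost generator therefore has bounded mode only finitely often. The clean way around this is to run the argument on the dual module $M^*$ exactly as in the proof of Lemma \ref{polyn act}: there $M^*=v_0^*\cdot U^+_q\bo$ and the half-current action is manifestly polynomial, so one shows by induction on weight that for every $w^*\in M^*$ the coefficients of $x^+_{i,\geqslant}(z)$ acting on $w^*$ vanish in degree $>D(w^*)$, and then that $e_{p\delta\pm\alpha}$, having homogeneous degree $p$ and being built from the half-current modes, annihilates $w^*$ for $p$ large; dualizing gives $e_{p\delta\pm\alpha}v=0$ for large $p$. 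A short alternative, if available from the appendix, is to quote directly that $e_{p\delta\pm\alpha}$ lies in the span of $\C[z]$-coefficients of products of half-currents and invoke Lemma \ref{polyn act} termwise.
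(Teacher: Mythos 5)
The general strategy you outline — reduce to the polynomiality of half-currents from Lemma \ref{polyn act} and express the root vectors in terms of Drinfeld modes — is the paper's strategy too. But you correctly identify the obstruction (the intermediate vectors $x^+_{i_1,m_1}\cdots x^+_{i_{k-1},m_{k-1}}v$ and their growing polynomiality bounds) and then don't actually resolve it: passing to the dual module $M^*$ changes nothing, because you are still applying a product of generators of individually modest mode to a fixed vector, and you still need to control the bound at each intermediate stage. Your dual-module sketch restates Lemma \ref{polyn act} and then asserts the conclusion in the sentence "\ldots and then that $e_{p\delta\pm\alpha}$, having homogeneous degree $p$ and being built from the half-current modes, annihilates $w^*$ for $p$ large," which is precisely the step in dispute.

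The ingredient that closes the gap — and which the paper cites alongside Lemma \ref{polyn act} — is Lemma \ref{lem:root-vec} in the Appendix. It says: for any $\alpha\in\Delta^+$ and any $l\ge 1$, there is $p_0$ such that for $p\ge p_0$ the root vector $e_{p\delta\pm\alpha}$ lies in the subalgebra generated by $x^\pm_{i,m}$, $k_i^{\pm1}$ with $m\ge l$. This is strictly stronger than "being built from half-current modes"; it says \emph{every} Drinfeld generator appearing in an expression for $e_{p\delta\pm\alpha}$ can be taken to have mode $\ge l$. Given $v\in M$, Lemma \ref{polyn act} produces a single bound $D$ with $x^+_{i,m}v=0$ ($m>D$) and $x^-_{i,m}v=0$ ($m>D$). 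Take $l=D+1$ and $p\ge p_0(l)$. In the resulting expression for $e_{p\delta\pm\alpha}$, commute the $k_i^{\pm1}$ factors to the left (they normalize the $x^\pm_{i,m}$ up to scalars); each remaining monomial then ends in some $x^\pm_{i,m}$ with $m\ge l>D$, which already annihilates $v$. No intermediate vector ever appears, so the bookkeeping you were worried about is simply avoided. Your parenthetical "short alternative \ldots if available from the appendix" is pointing in the right direction, but without pinning down the precise statement of Lemma \ref{lem:root-vec} the argument has a real hole.
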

\begin{proof}
 This follows from Lemma \ref{polyn act} and Lemma \ref{lem:root-vec}. 
\end{proof}
\medskip

\subsection{Submodules of modules of type $V\otimes M$}\label{MV subsec}
Let now $V$ be an irreducible $U_q\g$ module with highest $\ell$-weight $\bs \Psi^V\in\mfr_{\g}$, 
\begin{align*}
V=L(\bm_0)\,,\quad  \bm_0=m(\Psib^V)\,.
\end{align*}
The following lemma describes the structure of the action of $x^\pm_i(z)$ in $V$.
\begin{lem}(\cite{Y},Proposition 3.1)\label{delta lem} 
Let $V\in\mc \Ob \cO_\g$, $\bm$ a 
monomial in $\chi_q(V)$ and $v\in V_\bm$. 
Then the formal series $x^{\pm}_i(z)v$ has the form 
\begin{align*}
x^{\pm}_i(z)v=\sum_{a}\sum_{k=0}^{r_a}v_{k,a} (\partial_a)^k\delta(za), 
\quad v_{k,a}\in V_{\bm A_{i,a}^{\pm1}}\,, 
\end{align*}
where $a$ runs over a finite subset of $\C^\times$ 
and $r_a=\dim V_\bm+\dim V_{\bm A_{i,a}^{\pm1}}-2$.
\qed
\end{lem}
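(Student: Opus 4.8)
The plan is to reduce the assertion to the commutation relation \eqref{x+x-} together with the action of $\phi_i^\pm(z)$ on finite-dimensional weight spaces. The key point is that $V$ lies in $\cO_\g$, so every weight space is finite-dimensional; fix a monomial $\bm$ in $\chi_q(V)$, pick $v\in V_\bm$, and consider the formal series $x_i^+(z)v$. Its coefficients $x_{i,m}^+v$ live in the finite-dimensional space $V_{\bm\,\sfq^{\alpha_i}}=\bigoplus_{\bn}V_{\bn}$, the sum being over monomials $\bn$ with $\varpi(\bn)=\varpi(\bm)\sfq^{\alpha_i}$, of which there are finitely many by the $\cO_\g$ condition. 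First I would project onto a single generalized $\ell$-weight component $V_{\bn}$: it suffices to show that for each such $\bn$, the component of $x_i^+(z)v$ in $V_{\bn}$ is a finite sum of derivatives of delta functions $\delta(za)$ supported at finitely many points $a$, and that a nonzero such term forces $\bn=\bm A_{i,a}$.

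The mechanism is the relation $[x_i^+(z),x_j^-(w)]=\delta_{i,j}\,\delta(z/w)\,\dfrac{\phi_i^+(z)-\phi_i^-(z)}{q_i-q_i^{-1}}$ from \eqref{x+x-}, combined with \eqref{phi-x}. Concretely, I would argue as follows. On the finite-dimensional space $W=V_{\bm\,\sfq^{\alpha_i}}$, the operator-valued series $\phi_i^+(z)$ (expanded at $z=0$) and $\phi_i^-(z)$ (expanded at $z=\infty$) are, by \eqref{1+A}, rational in $z$ with a common denominator whose zeros are the poles; call the set of poles $S\subset\C^\times$. Using \eqref{x+x-} paired against test vectors $x_{j,r}^-$ acting on a suitable dual/companion space, one sees that $x_i^+(z)v$ can have "singular support" only at points of $S$ (for $+$) — equivalently the component in $V_\bn$ is a combination of $(\partial_a)^k\delta(za)$ with $a$ in this finite set. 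The order of the pole, hence the maximal $k=r_a$, is controlled by the dimensions: a standard linear-algebra bound using that $x_{i,m}^+\colon V_\bm\to V_{\bm A_{i,a}}$ and $x_{i,m}^-\colon V_{\bm A_{i,a}}\to V_\bm$ are related by \eqref{x+x-} gives $r_a\le \dim V_\bm+\dim V_{\bm A_{i,a}^{\pm1}}-2$. The fact that a nonzero contribution at $z=a$ must land in $V_{\bm A_{i,a}^{\pm1}}$ follows from \eqref{phi-x}: multiplying $x_i^+(z)$ by $\phi_i^\pm(z)$ shifts the $\ell$-weight, and matching the delta-function support at $z=a$ against the eigenvalue of $\phi_i^\pm$ at $z=a$ pins down $\bn=\bm A_{i,a}$.

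I would organize the writeup in three steps: (1) reduce to a single weight space and observe finiteness of the relevant $\ell$-weights; (2) prove the delta-function structure of $x_i^+(z)v$ by playing off \eqref{x+x-} — here the cleanest route is to test against $x_{j,r}^-$ and use that $[x_i^+(z),x_j^-(w)]$ is supported on the diagonal $z=w$ and has rational ($\phi$-type) coefficients, so the "numerator polynomials" of $x_i^+(z)v$ in the partial-fraction decomposition are forced to be constants times derivatives of delta; (3) identify the $\ell$-weight of each residue via \eqref{phi-x} and bound the order via the dimension count.

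The main obstacle I expect is step (2): making rigorous the passage from "$x_i^+(z)v$ is a formal series in $z$" to "its components are finite combinations of $(\partial_a)^k\delta(za)$." One must be careful that $x_i^+(z)=\sum_{m\in\Z}x_{i,m}^+z^m$ is a genuinely doubly-infinite series, so the statement is really that on each finite-dimensional weight space it becomes, after clearing the rational $\phi$-denominators, a Laurent polynomial times $\delta$-type distributions. The trick is that $V$ being in $\cO_\g$ forces $x_{i,m}^+v=0$ for $m\ll 0$ on a genuine (non-$\ell$-graded) basis argument — but in fact $x_i^+(z)v$ need not be one-sided, which is exactly why delta functions appear. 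I would handle this by applying $(z^{-1}-w^{-1})$-type operators built from \eqref{phi-x} to kill everything off the support $S$, reducing to a finite-dimensional identity of operators that can be checked directly; alternatively one cites \cite{Y}, Proposition 3.1, of which this is a restatement, and only sketches the argument. Given that the lemma is attributed to \cite{Y}, a brief proof along the above lines — or simply a reference — suffices.
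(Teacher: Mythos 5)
The paper does not prove this lemma: it is quoted from \cite{Y}, Proposition 3.1, with $\qed$ attached to the statement itself, so there is no in-paper argument to compare against. Assessing your sketch on its own terms: the scaffolding is the right shape (finite-dimensionality of weight spaces, reduction to a single generalized $\ell$-weight component $V_\bn$ of the target, the observation that a formal Laurent series annihilated by a nonzero polynomial is a finite combination of derivatives of delta functions, and a dimension count for $r_a$), but you have attached the crucial support argument to the wrong relation. In your step (2) you lean on \eqref{x+x-} ``paired against test vectors $x_{j,r}^-$,'' but \eqref{x+x-} involves $x_i^-(w)x_i^+(z)v$, whose structure is exactly as unknown as that of $x_i^+(z)v$, so it gives no handle on the support. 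The load-bearing relation is \eqref{phi-x}. Restrict to $\Hom(V_\bm, V_{\bm\sfq^{\alpha_i}})$ and project onto a generalized $\ell$-weight component $V_\bn$ of the target: on these finite-dimensional spaces $\phi_i^+(z)$ acts as $\Psi_i^{\bm}(z)$ (resp. $\Psi_i^{\bn}(z)$) times a unipotent operator-valued series, with nilpotency degrees bounded by $\dim V_\bm$ and $\dim V_\bn$. Rearranging \eqref{phi-x}, clearing the rational denominators, and extracting coefficients in $z$ yields a nonzero polynomial $P(w)$ (nonzero because $q_{i,i}\ne\pm1$) with $P(w)\,X_\bn(w)=0$, where $X_\bn(w)$ is the $V_\bn$-component of $x_i^+(w)v$. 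This is the delta-function structure; evaluating the same relation at a root $a$ of $P$ forces $\bn=\bm A_{i,a}$, and the order bound $r_a$ comes from the two nilpotency degrees, giving exactly $\dim V_\bm + \dim V_{\bm A_{i,a}^{\pm1}} - 2$. You already invoke \eqref{phi-x} in your step (3) to identify $\bn$; promote it to the main engine for step (2) and \eqref{x+x-} becomes unnecessary. You flag step (2) yourself as the main obstacle and fall back on citing \cite{Y}, which is also what the paper does; that is fine, but the mechanism you actually wrote down for step (2) would not close the gap as stated.
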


We study the properties of the module $V\otimes M$.
To this aim, it is convenient to use the Drinfeld coproduct \eqref{Drin coprod}.
\begin{lem}\label{lem:Drinfeld-copro}
On $V\otimes M$, the Drinfeld coproduct $\Delta_D$ gives a well defined structure of 
a $U_q\bo$ module which we denote  $V\otimes_D M$. 
As $U_q\bo$ modules, $V\otimes_D M$ and $V\otimes M$ are isomorphic. 
\end{lem}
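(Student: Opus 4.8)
The plan is to make sense of the formal series appearing in $\Delta_D$ when applied to $V\otimes M$, show that they act by well-defined operators, and then transport the isomorphism from the twist formula of Proposition~\ref{prop:twist}. First I would observe that the potentially problematic terms in $\Delta_D(x_i^+(z))=x_i^+(z)\otimes 1+\phi_i^-(z)\otimes x_i^+(z)$ are the infinite sums hidden in $\phi_i^-(z)\otimes x_i^+(z)$: here $\phi_i^-(z)=\sum_{m\le 0}\phi_{i,m}^- z^m$ acts on the first factor $V$, while $x_i^+(z)=\sum_{m\ge 0}x_{i,m}^+z^m$ when restricted to $U_q\bo$ only involves non-negative modes, so this is really $x_{i,\geqslant}^+(z)$ acting on $M$. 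By Lemma~\ref{polyn act}, for any $v\in M$ we have $x_{i,\geqslant}^+(z)v\in M\otimes\C[z]$, i.e.\ only finitely many modes of $x_i^+(z)$ act nontrivially on a given vector of $M$. Since $V\in\cO_\g$ has finite-dimensional weight spaces and $\phi_i^-(z)$ preserves each weight space of $V$, the coefficient of each power $z^k$ in $\Delta_D(x_i^+(z))(u\otimes v)$ is a finite sum, hence a well-defined element of $V\otimes M$. The same reasoning (using $x_{i,>}^-(z)v\in M\otimes\C[z]$ and $\phi_i^+(z)v\in\Psi_i(z)\cdot M\otimes\C[z]$ from Lemma~\ref{polyn act}) handles the other generators; note $e_0$ and the $k_i^{\pm1}$ act through genuine Drinfeld-coproduct expressions with only finitely many contributions.

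Next I would verify that these operators satisfy the defining relations of $U_q\bo$, so that they really define a module structure. Rather than checking the Drinfeld relations \eqref{phi-phi}--\eqref{x-Serre} directly, the cleaner route is to invoke Proposition~\ref{prop:twist}: formally $\Delta_D(x)=\sigma(\cR_+)^{-1}\Delta(x)\sigma(\cR_+)$, and $\sigma(\cR_+)=\prod_{r\le 0}\exp_{q_{\beta_r}}(-(q_{\beta_r}-q_{\beta_r}^{-1})f_{\beta_r}\otimes e_{\beta_r})$ involves the root vectors $e_{\beta_r}\in U_q\bo$ acting on $M$. By Corollary~\ref{e-delta-zero}, $e_{p\delta\pm\alpha}v=0$ for $p$ large, so on each vector $u\otimes v\in V\otimes M$ the product $\sigma(\cR_+)$ truncates to a finite product of finite sums — each $\exp_{q_{\beta_r}}$ becomes a polynomial when applied to $v$ — and hence $\sigma(\cR_+)$ is a well-defined invertible operator on $V\otimes M$ (invertibility because it is unitriangular with respect to a suitable filtration). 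The standard coproduct $\Delta$ already makes $V\otimes M$ into a $U_q\bo$ module (tensor product in $\cO_\bo$, using \eqref{coideal-b} and the fact that both factors lie in $\cO_\bo$). Conjugating this action by the invertible operator $\sigma(\cR_+)$ yields another $U_q\bo$ module structure on the same underlying space, and by Proposition~\ref{prop:twist} the conjugated action of $x$ agrees with $\Delta_D(x)$ on each vector. This simultaneously proves that $\Delta_D$ gives a module structure and that $V\otimes_D M\cong V\otimes M$ as $U_q\bo$ modules, the isomorphism being $\sigma(\cR_+)^{-1}$ (or its inverse, depending on the convention).

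I expect the main obstacle to be the careful bookkeeping needed to make the formal manipulation with $\sigma(\cR_+)$ rigorous: one must exhibit an exhaustive increasing filtration $V\otimes M=\bigcup_N W_N$ by finite-dimensional (or at least $\sigma(\cR_+)$-stable in a controlled way) subspaces on which $\sigma(\cR_+)$ acts as a well-defined unitriangular operator, so that the infinite product genuinely converges vectorwise and is invertible. The natural filtration comes from the $Q^+$-grading: $e_{\beta_r}\otimes$ something lowers the $M$-weight (in $Q^+$) while raising the $V$-weight, so bounding the total weight drop bounds the number of factors that can act nontrivially; combined with Corollary~\ref{e-delta-zero} giving a bound $p$ for each fixed $v$, this controls everything. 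Once this finiteness is pinned down, verifying that conjugation intertwines $\Delta$ with $\Delta_D$ vectorwise is a direct application of Proposition~\ref{prop:twist}, and the isomorphism statement is immediate. A secondary point worth a sentence is why the conjugated operators land in $\End(V\otimes M)$ rather than some completion — this is exactly the content of Lemma~\ref{polyn act} and Corollary~\ref{e-delta-zero}, which is why those lemmas were proved first.
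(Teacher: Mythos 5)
Your proposal is correct and follows essentially the same route as the paper: the core step is showing that $\sigma(\cR_+)$ acts as a well-defined invertible operator on $V\otimes M$ (using the bound on $M$-weights together with Corollary~\ref{e-delta-zero}), and then invoking Proposition~\ref{prop:twist} to transport the standard tensor-product structure to the Drinfeld one. Your first paragraph's direct check via Lemma~\ref{polyn act} is a fine sanity check but becomes redundant once the twist argument is in place, and note that it is the ordinary Hopf-subalgebra property of $U_q\bo$ under $\Delta$ (not \eqref{coideal-b}, which concerns $\Delta_D$) that makes $V\otimes M$ a $U_q\bo$ module to begin with.
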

\begin{proof} 
We show that $\sigma(\cR_+)$ is a well-defined linear operator on $V\otimes M$. 
Consider its action on a weight vector $v\otimes w\in V\otimes M$. 
Expanding \eqref{R+}, we obtain a linear combination of terms 
of the form 
\begin{align*}
f_{-k_1\delta-\beta_1}^{n_1}\cdots f_{-k_N\delta-\beta_N}^{n_N}v
\otimes 
e_{k_1\delta+\beta_1}^{n_1}\cdots e_{k_N\delta+\beta_N}^{n_N}w\,,
\end{align*}
where $k_i,n_i\ge0$ and $\beta_i\in\Delta^+$, $1\le i\le N$. 
The second component stays in a finite dimensional subspace 
of $M$ of weight $\ge \wt w$. 
For this term to be non-zero, there are only finitely many choices of $(n_i,\beta_i)$. 
From Corollary \ref{e-delta-zero}, 
it is non-zero only for finitely many $k_i$'s. 
Therefore $\sigma(\cR_+)v\otimes w$ comprises only finitely many non-zero terms.
In view of Proposition \ref{prop:twist}, 
$\Delta_D(x)$ has a well-defined action on $V\otimes M$
for any $x\in U_q\bo$, and 
$\sigma(\cR_+)$ gives an intertwiner between 
$V\otimes_D M$ and $V\otimes M$. 
\end{proof}

The next lemma shows that submodules of $V\otimes_D M$ have a very special form.
\begin{lem}\label{submodules}
Let $W\subset V\otimes_D M$ be a non-zero submodule. 
Then there exists a linear subspace $V^{(0)}\subset V$ 
such that $W=V^{(0)}\otimes M$. 
The highest $\ell$-weight vector $v_0$ of $V$ belongs to $V^{(0)}$. 
\end{lem}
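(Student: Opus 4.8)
The plan is to exploit the Drinfeld-coproduct description of the action together with the polynomiality of currents on $M$ (Lemma \ref{polyn act}) to show that submodules are ``constant along the $M$ factor''. Concretely, write a nonzero $w\in W$ in the form $w=\sum_{k} v_k\otimes u_k$ with $\{u_k\}$ linearly independent in $M$ and each $v_k\ne 0$ in $V$. I want to show first that each individual $v_k\otimes \ket{\emptyset}_M$ lies in $W$ (so that, pushing $u_k$ down to the highest-weight vector, one sees $V^{(0)}\otimes\ket{\emptyset}_M\subset W$ for the subspace $V^{(0)}$ spanned by all such $v_k$ over all elements of $W$), and then that $V^{(0)}\otimes M\subset W$ and conversely $W\subset V^{(0)}\otimes M$.

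First I would analyze how $U_q^+\bo$ acts in the Drinfeld coproduct. By \eqref{Drin coprod}, $\Delta_D(x^+_i(z))=x^+_i(z)\otimes 1+\phi^-_i(z)\otimes x^+_i(z)$; but since $M$ has polynomial highest $\ell$-weight, $x^+_{i,\geqslant}(z)$ and the ``positive part'' of $\phi^+_i(z)$ act on $M$ through $M\otimes\C[z]$ (Lemma \ref{polyn act}), and the key point is that on the $M$ side only the operators with $\hdeg\ge 0$ survive in $U_q\bo$. Correspondingly, by taking the Taylor coefficient of $z^0$ in $\Delta_D(x^+_i(z))$ one gets an operator whose leading term on $w$ is $(x^+_{i,0}\otimes 1)$ plus terms of strictly higher $M$-weight coming from $\phi^-_{i,0}\otimes x^+_{i,0}$-type contributions acting via the polynomial part. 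The idea is to use a filtration of $M$ by weight: pick $u_{k_0}$ of maximal weight among the $u_k$; then acting by suitable elements of $U_q\bo$ built from $x^+_i(z)$ one can first ``separate out'' the highest-weight layer, reducing to the case where all $u_k=\ket{\emptyset}_M$, i.e.\ $w=v\otimes\ket{\emptyset}_M$ with $v$ an arbitrary vector of $V$ in a subspace we call $V^{(0)}$. Then acting with $U_q^-\bo$, which on $V\otimes_D M$ produces (via $\Delta_D(x^-_i(z))=x^-_i(z)\otimes\phi^+_i(z)+1\otimes x^-_i(z)$) the operators $1\otimes x^-_{i,r}$, $r>0$, and these together with $1\otimes\phi^+_{i}$ generate all of $M$ from $\ket{\emptyset}_M$ while leaving the $V$ factor alone. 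Hence $V^{(0)}\otimes M\subset W$. The reverse inclusion $W\subset V^{(0)}\otimes M$ follows because every $w\in W$, after the separation-of-layers argument, is seen to have all its ``$V$-components'' (in the sense of writing $w$ against a basis of $M$) inside $V^{(0)}$, using that the $M$-weight-lowering operators $1\otimes x^-_{i,r}$ act surjectively enough and commute with the $V$-factor so that each layer of $w$ separately lies in $W$.

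Finally, $v_0\in V^{(0)}$: the highest $\ell$-weight vector $v_0$ of $V$ has the property that $v_0\otimes\ket{\emptyset}_M$ generates $V\otimes_D M$ as a highest $\ell$-weight vector (its $\ell$-weight is $\bm_0\bm_p$, the top monomial), and for any nonzero submodule $W$, some translate of $w$ by $U_q^+\bo$ will hit this top $\ell$-weight space; since that space is spanned by $v_0\otimes\ket{\emptyset}_M$ (by \eqref{1+A} and Corollary \ref{cor:1-finite}, the top monomial has multiplicity one and occurs only in $V_{\bm_0}\otimes M_{\bm_p}$, both one-dimensional), we get $v_0\otimes\ket{\emptyset}_M\in W$, hence $v_0\in V^{(0)}$.

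\textbf{Main obstacle.} I expect the genuinely delicate step to be the ``separation of layers'': showing that from an arbitrary $w=\sum_k v_k\otimes u_k\in W$ one can extract $v_{k}\otimes(\text{lower-weight stuff})$ cleanly, i.e.\ that the mixing terms $\phi^-_i(z)\otimes x^+_i(z)$ in $\Delta_D$ (which are genuinely infinite series, controlled only by Corollary \ref{e-delta-zero} and the finite-dimensionality of weight spaces of $M$ in bounded weight) do not obstruct an inductive weight argument. Making the induction on $\wt$-of-$M$-component precise — and checking that at each stage the operators one applies are actually in $U_q\bo$ (only $\hdeg\ge 0$ Drinfeld modes) and that the needed nonvanishing holds because $M$ is irreducible — is where the real work lies; everything else is bookkeeping with $q$-characters and the triangular decomposition.
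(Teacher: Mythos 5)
Your overall blueprint matches the paper's — define $V^{(0)}=\{v\mid v\otimes\ket{\emptyset}_M\in W\}$, establish $V^{(0)}\otimes M\subset W$, then show $W\subset V^{(0)}\otimes M$ by pushing $M$-components up to $\ket{\emptyset}_M$ — but the step you yourself flag as the ``genuinely delicate'' one (separating the mixing in $\Delta_D$) is a real gap, and your proposed resolution by filtering on the $M$-weight cannot work as stated. Both pieces of $\Delta_D(x^+_{i,k})w = (x^+_{i,k}\otimes 1)w + \sum_{j\ge0}(\phi^-_{i,-j}\otimes x^+_{i,k+j})w$ land in the \emph{same} total $\tb$-weight subspace of $V\otimes_D M$ (the $V$-weight rises by $\alpha_i$ in the first piece and the $M$-weight rises by $\alpha_i$ in the second, so the products agree). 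So no amount of weight bookkeeping, on its own, lets you isolate the ``first-layer'' term from the sum, nor does irreducibility of $M$ supply the missing lever.

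What closes the gap in the paper is a finer invariant: the $\ell$-weight decomposition, in conjunction with the $1$-finiteness of $M$ (Corollary \ref{cor:1-finite}). Applying $\Delta_D(x^-_{i,k})$ to an $\ell$-weight vector $w$ produces $\bigl(\sum_{j\ge0}x^-_{i,k-j}\otimes\phi^+_{i,j}\bigr)w + (1\otimes x^-_{i,k})w$. By Lemma \ref{delta lem}, the first summand lives in $\ell$-weights shifted from that of $w$ by nontrivial $A^{-1}_{i,a}$'s; by $1$-finiteness of $M$ the second summand is shifted only by a pure $y$-monomial. These $\ell$-weights are genuinely distinct, and since $W$ is preserved by the Cartan currents it splits along $\ell$-weights, so each piece lies in $W$ separately. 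This yields $(1\otimes x^-_{i,k})W\subset W$ and, mutatis mutandis, $(x^+_{i,k}\otimes1)W\subset W$ and $\bigl(\sum_{j\ge0}\phi^-_{i,-j}\otimes x^+_{i,k+j}\bigr)W\subset W$. Only with these operators known to individually preserve $W$ does your inductive ``peel off the top $M$-layer'' argument become sound (and then Lemma \ref{polyn act} supplies the maximal $k$ with $x^+_{i,k}m\neq0$ to anchor the induction). So: right scaffolding, but you are missing the $\ell$-weight separation argument that is the actual engine of the proof.
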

\begin{proof}
Let $w\in W$ be a vector in an $\ell$-weight subspace. 
Since $W$ is a submodule, 
\begin{align*}
\Delta_D(x^-_{i,k})w=
\bigl(\sum_{j\ge0}x^-_{i,k-j}\otimes \phi^+_{i,j}\bigr)w
+(1\otimes x^-_{i,k})w 
\end{align*}
belongs to $W$ for any $i\in I$, $k>0$. 
By Lemma \ref{delta lem}, the first term is a sum of terms 
which belong to $\ell$-weight subspaces different from that of $w$. 
On the other hand, by Corollary \ref{cor:1-finite}, 
the second term belongs to the same $\ell$-weight subspace as 
that of $w$. 
Hence both terms separately belong to $W$. Since $w$ is arbitrary,
we conclude that
\begin{align}
\bigl(\sum_{j\ge0}x^-_{i,k-j}\otimes \phi^+_{i,j}\bigr)W\subset W\,, 
\quad
(1\otimes x^-_{i,k}) W\subset W\,.
\label{xW-W1}
\end{align}

Let now $x\in U_q^-\bo$. Then we claim that $\Delta_D(x)=1\otimes x+\cdots$
where $\cdots$ is a sum of terms whose first component contains
at least one $x^-_{i,j}$. Indeed, the element 
$x\in U_q^-\bo\subset\U^-_q\g$
can be written in terms of $x^-_{i,m}$, $i\in I$, $m\in\Z$. We have
$\Delta_D(x^-_{i,m})=1\otimes x^-_{i,m}+\cdots$ and 
$U_q\bo$ is a coideal, see \eqref{coideal-b}.
Therefore the claim follows.
 
Hence, we can generalize \eqref{xW-W1} to
\begin{align}
(1\otimes x)W\subset W\quad (x\in U^-_q\bo).  
\label{xW-W2}
\end{align}
By the same argument leading to \eqref{xW-W1}, we have also
\begin{align}
(x^+_{i,k}\otimes 1) W\subset W\,,
\quad
\bigl(\sum_{j\ge0}\phi^-_{i,-j}\otimes x^+_{i,k+j}\bigr)W\subset W\,.
\label{xW-W3}
\end{align}

Consider the linear subspace
$V^{(0)}=\{v\in V\mid v\otimes\ket{\emptyset}_M\in W\}$. 
Using \eqref{xW-W2}, we obtain that $V^{(0)}\otimes M\subset W$. 
We prove the equality by showing the following statement:
If $w=\sum_{r=1}^Nv_r\otimes m_r\in W$ 
and $\{v_r\}_{r=1}^N\subset V$, 
$\{m_r\}_{r=1}^N\subset M$ are linearly independent
weight vectors,
then $v_r\in V^{(0)}$ for all $r$. 

Suppose $N=1$, so that $w=v_1\otimes m_1$, $v_1\neq0$, $m_1\neq0$. 
If $m_1\in\C\ket{\emptyset}_M$, there is nothing to show. 
Otherwise there exists an $i\in I$ and $k\ge0$ such that $x^+_{i,k}m_1\neq0$. 
By Lemma \ref{polyn act}, there is the largest $k$ with this property. 
For this $k$ we obtain $\phi^-_{i,0}v_1\otimes x^+_{i,k}m_1\in W$
by using \eqref{xW-W3}.  
Repeating this process, we arrive at $v_1\otimes\ket{\emptyset}_M\in W$. 

Next let $N>1$, and assume that the statement is proved for $N'<N$. 
Arguing similarly as above, we can find an $i\in I$ and $k\ge0$ such that
$x^+_{i,k}m_r\neq 0$ for some $r$ and that $x^+_{i,l}m_s=0$ for all
$l>k$, $1\le s\le N$. Applying \eqref{xW-W3}, we obtain that 
$\sum_{r=1}^N \phi^-_{i,0}v_r\otimes x^+_{i,k}m_r\in W$.  
If $\{ x^+_{i,k}m_r\}_{r=1}^N$ is linearly independent, 
then we repeat this procedure. 
After a finite number of steps, we obtain vectors $m'_r\in M$ 
where $\sum_{r=1}^N v_r\otimes m'_r\in W$,   
$m'_r\neq 0$ for some $r$, 
and $\{m'_r\}_{r=1}^N$ is not linearly independent. 
Renumbering indices, we may assume that 
$\{m'_s\}_{s=1}^{N'}$  ($0<N'<N$) 
is linearly independent and that 
$m'_r=\sum_{s=1}^{N'}m'_s a_{s,r}$ ($N'<r\le N$) with some
$a_{s,r}\in \C$. Then
\begin{align*}
\sum_{s=1}^{N'}v'_s\otimes m'_s\, \in W\,,
\quad v'_s=v_s+\sum_{r=N'+1}^Nv_ra_{r,s}\,. 
\end{align*}
It follows that $v'_s\in V^{(0)}$ ($1\le s\le N'$) by the induction hypothesis. 
Since $V^{(0)}\otimes M\subset W$, we see that 
$\sum_{s=1}^{N'}v'_s\otimes m_s$ belongs to $W$. This in turn implies
that
\begin{align*}
\sum_{r=N'+1}^Nv_r\otimes\bigl(m_r-\sum_{s=1}^{N'}m_s a_{s,r}\bigr) 
\ \in W\,.
\end{align*} 
Using again the induction hypothesis, we conclude that
$v_r\in V^{(0)}$ for all $r$. 

Finally, since $V^{(0)}\neq0$, we can use \eqref{xW-W3} 
to show that $v_0\in V^{(0)}$.
\end{proof}

\begin{cor}\label{inf dim cor} 
Let $\bm_0=m(\bs{\Psi}^V)$, $\bs{\Psi}^V\in\mfr_\g$, and 
let $\bm_p$ be a monomial in $\Z[X_{i,a},y_i^b]_{i\in I, a\in\C^\times,b\in\C}$. 
If $\bm_p\not\in\Z[y_i^b]_{i\in I, b\in\C}$, then the module 
$L(\bm_0\bm_p)$ is infinite-dimensional. \qed
\end{cor}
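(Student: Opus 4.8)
The plan is to realize $L(\bm_0\bm_p)$ as the irreducible quotient of a highest $\ell$-weight submodule of $V\otimes_D M$, and then to invoke Lemma \ref{submodules} to see that this quotient still carries all of $M$ as a tensor factor. Set $V=L(\bm_0)$, the irreducible $U_q\g$ module with highest $\ell$-weight $\bs{\Psi}^V\in\mfr_\g$, and $M=L(\bm_p)$. Since $\bm_p$ is a monomial in $\Z[X_{i,a},y_i^b]_{i\in I,a\in\C^\times,b\in\C}$, it has the form \eqref{poly-M2}, so $M$ has polynomial highest $\ell$-weight; and since $\bm_p\notin\Z[y_i^b]_{i\in I,b\in\C}$, at least one $X_{j,c}$ occurs in $\bm_p$ with positive exponent. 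By Corollary \ref{cor:1-finite} the module $M$ is then isomorphic to a tensor product involving at least one positive fundamental module $M^+_{j,c}$, together with a one-dimensional module; since each $M^+_{j,c}$ is infinite-dimensional by \eqref{qch-M+}, $M$ is infinite-dimensional. This is the only place where the hypothesis $\bm_p\notin\Z[y_i^b]_{i\in I,b\in\C}$ enters.

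By Lemma \ref{lem:Drinfeld-copro}, the Drinfeld coproduct endows $V\otimes M$ with a $U_q\bo$-module structure, denoted $V\otimes_D M$. Let $v_0\in V$ and $\ket{\emptyset}_M\in M$ be the highest $\ell$-weight vectors. A direct computation with \eqref{Drin coprod} shows that $\phi^+_i(z)(v_0\otimes\ket{\emptyset}_M)=\Psi^V_i(z)\Psi^M_i(z)\,(v_0\otimes\ket{\emptyset}_M)$ and $x^+_i(z)(v_0\otimes\ket{\emptyset}_M)=0$, using that $x^+_i(z)v_0=0$ and that only non-negative modes of $x^+_i(z)$ act on $M$, annihilating $\ket{\emptyset}_M$. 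Hence $v_0\otimes\ket{\emptyset}_M$ is a highest $\ell$-weight vector of $\ell$-weight $\bm_0\bm_p$, the submodule $W_0=U_q\bo\cdot(v_0\otimes\ket{\emptyset}_M)$ is a highest $\ell$-weight module, and $L(\bm_0\bm_p)$ is its unique irreducible quotient: $L(\bm_0\bm_p)\cong W_0/W_1$ with $W_1\subsetneq W_0$ the maximal proper submodule.

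Now I would apply Lemma \ref{submodules} to $W_0$ and, if it is nonzero, to $W_1$: there are linear subspaces $V^{(1)}\subseteq V^{(0)}\subseteq V$ with $W_0=V^{(0)}\otimes M$ and $W_1=V^{(1)}\otimes M$ (take $V^{(1)}=0$ if $W_1=0$; the inclusion $V^{(1)}\subseteq V^{(0)}$ follows from $W_1\subseteq W_0$ together with $M\neq0$). By Lemma \ref{submodules}, $v_0\in V^{(0)}$, while $v_0\notin V^{(1)}$ — otherwise $W_1$ would contain the generator $v_0\otimes\ket{\emptyset}_M$ of $W_0$ — so the inclusion is strict. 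Therefore, as vector spaces,
\begin{align*}
L(\bm_0\bm_p)\cong W_0/W_1\cong \bigl(V^{(0)}/V^{(1)}\bigr)\otimes M\,.
\end{align*}
Since $\dim\bigl(V^{(0)}/V^{(1)}\bigr)\ge1$ and $\dim M=\infty$, we conclude that $L(\bm_0\bm_p)$ is infinite-dimensional.

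Given the preceding results there is no substantial obstacle: the computation identifying $v_0\otimes\ket{\emptyset}_M$ as a highest $\ell$-weight vector of $\ell$-weight $\bm_0\bm_p$ is routine, and all the structural work is carried by Lemma \ref{submodules}, which forces every submodule — hence every subquotient — of $V\otimes_D M$ to retain $M$ as a full tensor factor. The only point requiring a little care is to apply that lemma to both $W_0$ and its maximal proper submodule $W_1$, rather than to $W_0$ alone.
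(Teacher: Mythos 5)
Your proof is correct and follows the same route as the paper: realize $L(\bm_0\bm_p)$ inside $V\otimes_D M$ via the Drinfeld coproduct and apply Lemma~\ref{submodules}. One small streamlining you missed: since Lemma~\ref{submodules} guarantees that \emph{every} nonzero submodule of $V\otimes_D M$ contains $v_0\otimes\ket{\emptyset}_M$, the cyclic module $W_0$ has no nonzero proper submodule at all, so $W_1=0$ automatically and $L(\bm_0\bm_p)=W_0=V^{(0)}\otimes M$ outright, with no need for the quotient $\bigl(V^{(0)}/V^{(1)}\bigr)\otimes M$.
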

\begin{proof}
Module $L(\bm_0\bm_p)$ is a subquotient of $L(\bm_0)\otimes_D L(\bm_p)$.
Lemma \ref{submodules} says that it is actually a submodule of the form $V^{(0)}\otimes L(\bm_p)$. 
Hence  $L(\bm_0\bm_p)$ is finite dimensional only if $L(\bm_p)$ is one-dimensional.
\end{proof}
 
Next we give a sufficient condition for $V\otimes M$ to be irreducible. 
The following lemma shows that 
a cancellation in $q$-characters {\it must} happen 
in order for $V\otimes M$ to be reducible. 
\begin{lem}\label{irred lemma}
Let 
\begin{align*}
&\chi_q(V)=\bm_0(1+\sum_s\prod_{t}A_{i'_{t,s},a'_{t,s}}^{-1})\,,
\quad
\chi_q(M)=y^{\bs b}\prod_{j=1}^r X_{i_j,a_j}\cdot \bar\chi_{i_j}\,.
\end{align*}
If $(i_j,a_j)\neq (i'_{t,s},a'_{t,s})$ for all $j$ and all $t,s$,  
then $V\otimes M$ is irreducible.
\end{lem}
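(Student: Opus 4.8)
The plan is to combine Lemma~\ref{lem:Drinfeld-copro} with Lemma~\ref{submodules}. By Lemma~\ref{lem:Drinfeld-copro} it is enough to show that $V\otimes_D M$ is irreducible, and by Lemma~\ref{submodules} every nonzero submodule $W\subset V\otimes_D M$ has the form $W=V^{(0)}\otimes M$ with the highest $\ell$-weight vector $v_0$ of $V$ lying in $V^{(0)}$. Since $V=U^-_q\g\cdot v_0$ by the triangular decomposition~\eqref{triangular-g}, it suffices to prove that $V^{(0)}$ is stable under every $x^-_{i,m}$ ($i\in I$, $m\in\Z$): then $V^{(0)}\supseteq U^-_q\g\cdot v_0=V$, so $W=V\otimes_D M$ and $V\otimes_D M$ is irreducible.

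The first step is to produce, for each $i\in I$, a family of ``$\phi^+$-twisted lowering operators'' preserving $V^{(0)}$. I would fix $v\in V^{(0)}$ and $k>0$ and apply $\Delta_D(x^-_{i,k})$ to $v\otimes\ket{\emptyset}_M\in W$. Using $\Delta_D(x^-_i(z))=x^-_i(z)\otimes\phi^+_i(z)+1\otimes x^-_i(z)$ together with $\phi^+_i(z)\ket{\emptyset}_M=\Psi^M_i(z)\ket{\emptyset}_M$, where $\Psi^M_i(z)=q_i^{b_i}\prod_{j:\,i_j=i}(a_j^{-1/2}-a_j^{1/2}z)$ is the $i$-th highest $\ell$-weight of $M$ (a polynomial, since $\chi_q(M)$ has highest monomial $y^{\bs b}\prod_j X_{i_j,a_j}$), one obtains
\[
\Delta_D(x^-_{i,k})(v\otimes\ket{\emptyset}_M)=\Bigl(\sum_{j\ge0}\Psi^M_{i,j}\,x^-_{i,k-j}v\Bigr)\otimes\ket{\emptyset}_M+v\otimes x^-_{i,k}\ket{\emptyset}_M\,.
\]
The left side lies in $W$, and the second term on the right lies in $W=V^{(0)}\otimes M$ since $v\in V^{(0)}$; hence $B^{(i)}_k v:=\sum_{j\ge0}\Psi^M_{i,j}\,x^-_{i,k-j}v\in V^{(0)}$ for all $k>0$. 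So it only remains to check that, as $k$ ranges over the positive integers, the vectors $B^{(i)}_k v$ span $\operatorname{span}\{x^-_{i,m}v\mid m\in\Z\}$; granting this, $x^-_{i,m}v\in V^{(0)}$ for all $i$ and $m$, and the proof is complete.

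This spanning statement is the heart of the matter, and it is exactly here that the hypothesis $(i_j,a_j)\neq(i'_{t,s},a'_{t,s})$ is used. Decomposing $v$ into its generalized $\ell$-weight components and applying Lemma~\ref{delta lem}, the sequence $(x^-_{i,m}v)_{m\in\Z}$ is a vector-valued linear recurring sequence $x^-_{i,m}v=\sum_{a\in A_v}a^m P_a(m)$, with $P_a$ a polynomial valued in $\bigoplus_{\bm}V_{\bm A_{i,a}^{-1}}$ and $A_v$ a finite set of ``characteristic roots''. Because the $A_{i,a}$ are algebraically independent and every monomial of $\chi_q(V)=\bm_0(1+\sum_s\prod_t A_{i'_{t,s},a'_{t,s}}^{-1})$ is $\bm_0$ times a product of the $A^{-1}$'s, each $a\in A_v$ must coincide with some $a'_{t,s}$ having $i'_{t,s}=i$; the hypothesis then forces $a\neq a_j$ for every $j$ with $i_j=i$, i.e. $\Psi^M_i(a^{-1})\neq0$ for all $a\in A_v$. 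Writing $B^{(i)}_k v=\sum_{a\in A_v}a^k\widehat P_a(k)$, where $\widehat P_a$ is the convolution of $P_a$ with the finite sequence $(\Psi^M_{i,j}a^{-j})_j$, the nonvanishing $\Psi^M_i(a^{-1})=\sum_j\Psi^M_{i,j}a^{-j}$ makes that convolution an invertible operator on polynomial sequences (a nonzero scalar times a unipotent one), so $\widehat P_a$ and $P_a$ have the same span of coefficient vectors; combined with the linear independence of the generalized Vandermonde sequences $(a^k k^l)_k$ across $a\in A_v$, this gives $\operatorname{span}\{B^{(i)}_k v\mid k>0\}=\operatorname{span}\{x^-_{i,m}v\mid m\in\Z\}$. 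The main obstacle is carrying out this linear-algebra step cleanly -- in particular pinning down the characteristic roots of the $x^-_i$-recursion on $V$ via algebraic independence of the $A$'s, so that the no-cancellation hypothesis becomes precisely the statement $\Psi^M_i(a^{-1})\neq0$ on $A_v$.
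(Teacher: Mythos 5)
Your proposal is correct and follows essentially the same route as the paper: reduce to $V\otimes_D M$ via Lemma~\ref{lem:Drinfeld-copro}, apply Lemma~\ref{submodules} to write any nonzero submodule as $V^{(0)}\otimes M$, and then use $\Delta_D(x^-_{i,>}(z))(v\otimes\ket{\emptyset}_M)$ together with the delta-function structure of $x^-_i(z)v$ from Lemma~\ref{delta lem} and the hypothesis $(i_j,a_j)\neq(i'_{t,s},a'_{t,s})$ to conclude $x^-$-stability of $V^{(0)}$. Where the paper compresses the linear algebra into one sentence (``If $v_{k,a}\neq0$, then by the assumption $a_j\neq a$... Hence $v_{k,a}\in V^{(0)}$'') and closes by invoking the argument from Lemma~\ref{lem:simple-to-simple} to pass from $p>0$ to all $p\in\Z$, you make the Vandermonde/invertible-convolution step explicit and instead observe directly that $V=U^-_q\g\cdot v_0$; both are valid, and yours arguably exposes more clearly exactly how the no-cancellation hypothesis enters as the nonvanishing $\Psi^M_i(a^{-1})\neq0$.
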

\begin{proof} 
We again work with $V\otimes_D M$.
Let $W$ be a non-zero submodule of  $V\otimes_D M$. 
By Lemma \ref{submodules}, it has the form $V^{(0)}\otimes M$ 
where $V^{(0)}=\{v\in V\mid v\otimes\ket{\emptyset}_M\in W\}$. 
By \eqref{xW-W3}, we have $x^+_{i,p}V^{(0)}\subset V^{(0)}$ ($i\in I$, $p\ge0$).
We show that $x^-_{i,p}V^{(0)}\subset V^{(0)}$ ($i\in I$, $p>0$). 

To this aim, let $v\in V^{(0)}\cap V_\bm$. By Lemma \ref{delta lem}, we can write
$x^{-}_i(z)v=\sum_{k,a}v_{k,a} \partial_a^k\bigl(\delta(za)\bigr)$, 
where $v_{k,a}\in V_{\bm A_{i,a}^{-1}}$. 
Then 
\begin{align*}
\Delta_D(x_{i,>}^-(z))(v\otimes \ket{\emptyset}_M)
\equiv
\sum_{k,a}{\Big(}\partial_a^k\bigl(\delta(za)\bigr)
\prod_{j,\ i_j=i}(1-za_j){\Big)}_{>}
(v_{k,a}\otimes \ket{\emptyset}_M)\quad
\bmod V^{(0)}\otimes M\,.
\end{align*}
Here, for a formal series $r(z)=\sum_{n\in\Z} r_nz^n$,  
we set $r(z)_{>}=\sum_{n> 0}r_nz^n$. 
If $v_{k,a}\neq 0$, then by the assumption
 $a_j\neq a$ for all $j$ in the product. 
Hence $v_{k,a}\in V^{(0)}$. 

From the proof of Lemma \ref{lem:simple-to-simple}, 
it follows that  $x^\pm_{i,p}V^{(0)}\subset V^{(0)}$ for all $i\in I$ and 
$p\in \Z$. 
Since $V$ is irreducible, we have $V^{(0)}=V$ and hence 
$W=V\otimes M$. 
This shows that $V\otimes_D M$ is irreducible. 
\end{proof}

We finish the section with a technical lemma which is useful in the study of submodules of $V\otimes_D M$. The algebra $U_q^-\bo$ is not generated by 
$x_{i,k}^-$ with $k>0$, however, in many cases, 
it is possible to avoid checking the invariance 
of the submodule with respect to the other generators.

We denote the highest $\ell$-weight vector of $V$ by $v_0$ 
and that of $M$ by $\ket{\emptyset}_M$.

\begin{lem}\label{other generators lem} Assume that $V_0\subset V$ is 
an $\ell$-weighted subspace such that the space $V_0\otimes_D M$ is 
invariant under the action of 
$\{x^+_{i,k}\}_{i\in I, k\ge0}$ and $\{x^-_{i,k}\}_{i\in I, k>0}$.
Assume that $v_0\in V_0$ 
and 
$V_0\otimes_D \ket{\emptyset}_M
\subset U_q\bo(v_0\otimes_D \ket{\emptyset}_M)$. 
Finally, assume that $V_0$ and the $\ell$-weighted 
complement of $V_0$ in $V$ have no common $\ell$-weights. 
Then $V_0\otimes_D M$ is a submodule in $V\otimes_D M$.
\end{lem}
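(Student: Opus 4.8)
The plan is to prove the lemma in two moves: first reduce it, via Lemma~\ref{submodules}, to an identification of two subspaces of $V$, and then establish that identification by combining the $\ell$-weight hypothesis with a direct check that $V_0\otimes_D M$ is stable under the ``other'' generators. First I would set $W:=U_q\bo\cdot(v_0\otimes_D\ket{\emptyset}_M)$, a genuine $U_q\bo$-submodule of $V\otimes_D M$. By Lemma~\ref{submodules} we have $W=V^{(0)}\otimes_D M$ for a subspace $V^{(0)}\subset V$ with $v_0\in V^{(0)}$, and $V^{(0)}$ is an $\ell$-weighted subspace: since $\phi^+_i(z)(v\otimes_D\ket{\emptyset}_M)=\Psi_i^M(z)\,(\phi^+_i(z)v)\otimes_D\ket{\emptyset}_M$ and $\Psi^M_i(z)$ is invertible in $\C[[z]]$, the $\phi$-stability of $W$ forces $\{v\in V:v\otimes_D\ket{\emptyset}_M\in W\}=V^{(0)}$ to be $\phi$-stable. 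The hypothesis $V_0\otimes_D\ket{\emptyset}_M\subset W$ then gives $V_0\subset V^{(0)}$, hence $V_0\otimes_D M\subset W$, and it remains to prove $V^{(0)}\subset V_0$; equivalently, writing $V^{(0)}=V_0\oplus V'$ with $V'$ the $\ell$-weighted complement of $V_0$ inside $V^{(0)}$, that $V'=0$.

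For the $\ell$-weight bookkeeping I would use that $M$ is $1$-finite (Corollary~\ref{cor:1-finite}): $\phi^+_i(z)$ acts on each weight space $M_\nu$ as $\nu_i$ times a fixed polynomial $P^M_i(z)$ with $P^M_i(0)\neq0$, so by $\Delta_D(\phi^+_i(z))=\phi^+_i(z)\otimes\phi^+_i(z)$ the $\ell$-weight of $v\otimes w\in V\otimes_D M$ is the product of those of $v$ and $w$; and since the $A_{i,a}$ are algebraically independent monomials in the $X_{j,c}$ while any two $\ell$-weights of $M$ differ only by a monomial in the $y_j^b$, the $V$-tensor-factor $\ell$-weight is recovered from the $\ell$-weight of $v\otimes w$. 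Consequently $\on{wt}_\ell V_0\cap\on{wt}_\ell V'=\emptyset$ (which holds because $V'$ lies in the $\ell$-weighted complement of $V_0$ in $V$) implies that $V_0\otimes_D M$ and $V'\otimes_D M$ have disjoint sets of $\ell$-weights inside $W$.

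The heart of the argument is then to show that $V_0\otimes_D M$ is stable under all of $U_q\bo$; granting this, $W=U_q\bo\cdot(v_0\otimes_D\ket{\emptyset}_M)\subset V_0\otimes_D M$ since $v_0\otimes_D\ket{\emptyset}_M\in V_0\otimes_D M$, whence $V^{(0)}=V_0$ and $V_0\otimes_D M=W$ is the desired submodule. Stability under $\{x^+_{i,k}\}_{k\ge0}$, $\{x^-_{i,k}\}_{k>0}$ and $U^0_q\bo$ is given, so everything reduces to the root vectors $k_\alpha e_{m\delta-\alpha}\in U^-_q\bo$ with $\alpha$ non-simple, which are \emph{not} in the subalgebra generated by the half-currents $x^-_{i,k}$ ($k>0$). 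To handle these I would first read off from $\Delta_D(x^-_i(z))=x^-_i(z)\otimes\phi^+_i(z)+1\otimes x^-_i(z)$ and the $1$-finiteness of $M$ that the given invariance forces $\sum_{l\ge0}P^M_{i,l}\,x^-_{i,m-l}\,V_0\subset V_0$ for all $i$ and all $m>0$; since $x^-_i(z)v$ is, for $v\in V$, a finitely supported distribution in $z$ (Lemma~\ref{delta lem}), a Vandermonde/deconvolution argument upgrades this to: every mode of the dressed current $P^M_i(z)x^-_i(z)$ preserves $V_0$. Each $e_{m\delta-\alpha}$ is a fixed coefficient of a product of currents $x^-_j(z_j)$; applying $\Delta_D$ and moving the resulting $\phi^+$-factors (which act on $M$ as scalars times the polynomials $P^M_j$) onto the $V$-side, one writes $e_{m\delta-\alpha}(v\otimes w)$ as a combination of tensors whose first factor is a product of dressed currents applied to $v\in V_0$, hence lies in $V_0$. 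I expect this last step --- replacing the missing generators of $U^-_q\bo$ by the half-currents at the cost of ``dressing'' them by $P^M_i(z)$, which is exactly where the $1$-finiteness of $M$ and the distributional structure of $x^-_i(z)$ on $V$ enter --- to be the main obstacle; the $\ell$-weight separation is then only what pins down that the auxiliary submodule $W$ coincides with $V_0\otimes_D M$.
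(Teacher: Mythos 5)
Your opening reduction is sound and parallels a step the paper does implicitly: set $W:=U_q\bo\cdot(v_0\otimes\ket{\emptyset}_M)$, use Lemma~\ref{submodules} to write $W=V^{(0)}\otimes M$ with $V^{(0)}\supset V_0$, and reduce to $V^{(0)}\subset V_0$. Your observation that the hypothesis $\Delta_D(x^-_{i,m})(V_0\otimes\ket{\emptyset}_M)\subset V_0\otimes_D M$ forces $\bigl[\Psi^M_i(z)x^-_i(z)\bigr]_m V_0\subset V_0$ for $m>0$ is also correct, and is the same germ the paper's proof starts from (there phrased as: for $k>K$, $\Delta_D(x^-_{i,k})$ preserves $V_0\otimes\ket{\emptyset}_M$, the extra term $v\otimes x^-_{i,k}\ket{\emptyset}_M$ having died by Lemma~\ref{polyn act}). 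Your Vandermonde extension of the dressed invariance to all $m\in\Z$ is not in the paper but looks correct.

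The gap is in your final step, which you yourself flag as ``the main obstacle,'' and it does not go through as described. Expanding $\Delta_D$ on a product such as $x^-_{i,m_1}x^-_{j,m_2}$ and applying to $v\otimes\ket{\emptyset}_M$ produces, among others, the cross term
\[
\sum_{l\geq 0} x^-_{i,m_1-l}\,v\otimes \phi^+_{i,l}\,x^-_{j,m_2}\ket{\emptyset}_M\,,
\]
whose first tensor factor carries \emph{raw} modes $x^-_{i,m_1-l}v$, not dressed ones: the factor $\phi^+_{i,l}$ in the second slot sits to the left of $x^-_{j,m_2}$, which does not yet land on the vacuum, and commuting $\phi^+_i(z)$ past $x^-_j(w)$ via \eqref{phi-x} produces the rational kernel $(z-q_{i,j}^{-1}w)/(q_{i,j}^{-1}z-w)$ rather than a polynomial, so the ``dressing'' $\Psi^M_i$ does not cleanly reassemble on the $V$-side after taking coefficients. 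Your Vandermonde step yields stability of $V_0$ under modes of the dressed currents only; it does not give (and should not give) stability under raw modes, since $V_0$ is in general not a $U_q\g$-submodule. Hence the claim that $\Delta_D(e_{m\delta-\alpha})(v\otimes w)$ is a sum of tensors with first factors products of dressed currents is unsubstantiated. A telling symptom: if your argument worked as written, it would prove that $V_0\otimes_D M$ is $U_q\bo$-stable \emph{without ever using the hypothesis that $V_0$ and its complement share no $\ell$-weights}; the paper keeps that hypothesis precisely because its proof needs it.

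The paper's route is quite different and sidesteps the direct-stability question entirely. It introduces $E_K=\langle x^-_{i,k}:k>K\rangle\cdot U^0_q\bo\cdot U^+_q\bo\subset U_q\bo\subset E=\langle x^-_{i,k}:k\in\Z\rangle\cdot U^0_q\bo\cdot U^+_q\bo$, notes $V_0\otimes\ket{\emptyset}_M$ is an $E_K$-module, and passes to the induced $E$-module $\bar W=E\otimes_{E_K}(V_0\otimes\ket{\emptyset}_M)$, of which $W$ is a $U_q\bo$-subquotient. A filtration of $\bar W$ and the relation \eqref{hx} show that $h_{i,s}$ for $s$ large acts triangularly with diagonal eigenvalues those of $V_0\otimes\ket{\emptyset}_M$; the separation hypothesis then pins $V^{(0)}$ to $V_0$. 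So instead of proving genuine $U_q\bo$-stability of $V_0\otimes_D M$ (which is what you are after), the paper proves only an asymptotic coincidence of $\ell$-weights between $\bar W$ and $V_0\otimes\ket{\emptyset}_M$, and lets the $\ell$-weight separation do the rest. If you wish to keep your approach, this is the alternative you would need to adapt: do not try to rewrite the missing generators of $U^-_q\bo$ via dressed currents, but instead control the $\ell$-weights that can possibly appear.
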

\begin{proof}
Let $W$ be the submodule generated by $V_0\otimes_D M$. 
The module $W$ is the cyclic submodule of $V\otimes_D M$ generated by 
$v_0\otimes \ket{\emptyset}_M$.
Indeed, by assumption, 
$V_0\otimes_D \ket{\emptyset}_M\subset U_q\bo(v_0\otimes_D 
\ket{\emptyset}_M)$ 
and we have \eqref{xW-W2}. 
We wish to show $W=V_0\otimes_D M$.

By Lemma \ref{polyn act}, and since $V_0\otimes_D M$ is invariant
by $\{x^-_{i,k}\}_{i\in I, k>0}$, there exists a $K>0$ such that 
$x^-_{i,k} (V_0\otimes \ket{\emptyset}_M)\subset V_0\otimes \ket{\emptyset}_M$ 
for all $i\in I$ and $k>K$. Consider the 
following subalgebras of $U_q\g$,
\begin{align*}
E_K=\langle x^-_{i,k}, i\in I,k>K\rangle\cdot U^0_q\bo\cdot U^+_q\bo\,,
\qquad 
E=\langle x^-_{i,k},  i\in I, k\in \Z\rangle\cdot U^0_q\bo\cdot U^+_q\bo\,.
\end{align*}
Note that we have the inclusions of subalgebras 
$E_K\subset U_q\bo \subset E$.

Then $V_0\otimes \ket{\emptyset}_M$ is an $E_K$-module. 
Consider the $E$ module 
$\bar W=E\otimes_{E_K}(V_0\otimes \ket{\emptyset}_M)$ 
induced from the $E_K$-module $V_0\otimes \ket{\emptyset}_M$. 
Clearly $W$ is a 
subquotient of $\bar W$ considered as $U_q\bo$ module. 
Therefore, to prove the lemma it is sufficient to show 
that all $\ell$-weights of $\bar W$ effectively coincide 
with the $\ell$-weights of $V_0\otimes \ket{\emptyset}_M$. 
More precisely, we show that for each $\ell$-weight 
vector $\bar w\in\bar W$ there exists an $S_{\bar w}>0$ and 
$v_{\bar w}\in V_0$ such that the eigenvalues of $h_{i,s}$ 
with $i\in I$ and $s>S_{\bar w}$ on $\bar w$ equal to the corresponding 
eigenvalues of $h_{i,s}$ on 
$v_{\bar w}\otimes \ket{\emptyset}_M$.

Define a filtration 
$\bar W_0\subset\bar W_1\subset \bar W_2\subset\cdots$ of $\bar W$, 
where $\bar W_0=V_0\otimes \ket{\emptyset}_M$, and  
$\bar W_r$ ($r\ge1$) is spanned by $\bar{W}_{r-1}$ and elements
\begin{align}
\bar w=x(v\otimes  \ket{\emptyset}_M)\,,  
\quad x=x^-_{j_1,m_1}\cdots x^-_{j_r,m_r}\,, 
\label{bar w}
\end{align}
with $j_t\in I$, $m_t\in\Z$, and $v\in V_0$. 
Then for any $\bar w$  of the form \eqref{bar w}, 
there exists an $S_{\bar w}>0$ such that 
$[h_{i,s},x](v\otimes \ket{\emptyset}_M)$ 
belongs to $\bar W_{r-1}$ if $s>S_{\bar w}$.  
This follows from \eqref{hx} with the help of the relation \eqref{x-x} 
and the fact that $x^-_{i,k}$ with large $k$ preserves 
$V_0\otimes \ket{\emptyset}_M$. 
Thus the action of $h_{i,s}$ on $\bar W$ is represented by a 
triangular matrix whose diagonal entries are the
eigenvalues of $h_{i,s}$ on $V_0\otimes\ket{\emptyset}_M$. 
Hence we obtain the desired result.

\end{proof}

\subsection{Grading} 
As an application of Lemma \ref{lem:Drinfeld-copro}, 
we show that the module $M$ has 
a sort of homogeneous grading.  
A similar result was proved in \cite{FH} 
(for the dual module and with $N=1$) by a different method.
\begin{prop}\label{prop:gradingM}
Let $M=M^+_{i_1,a_1}\otimes\cdots\otimes M^+_{i_N,a_N}$, and let $\Psib^M=(\Psi^M_i(z))_{i\in I}$ be its highest
$\ell$-weight.
Set $\omega^\vee=\omega^\vee_{i_1}+\cdots+\omega^\vee_{i_N}$,
$\mu=(\mu_i)_{i\in I}\in\tb^*$, $\mu_i=\prod_{j:i_j=i}a_{j}^{-1/2}$.
Then there exists a grading $M=\oplus_{m=0}^\infty M[m]$ as vector space
with the following properties.
\begin{align}
&
\C \ket{\emptyset}_M=M[0]\,,\quad
M_{\mu\sfq^{-\beta}}
=\bigoplus_{m=0}^\infty M_{\mu\sfq^{-\beta}}\cap M[m]\,
\quad (\beta\in Q^+)\,,
\label{gradingM1}\\
&e_{k\delta-\alpha}M[m]\subset M[m+k]\quad (k>0,\alpha\in\Delta^+)\,,
\label{gradingM2}\\
&e_{k\delta+\alpha}M[m]\subset \sum_{j=0}^{(\omega^\vee,\alpha)}M[m+k-j]
\quad (k\ge0,\alpha\in\Delta^+)\,,
\label{gradingM3}\\
&\overline{\phi}^+_{i,k}M[m]\subset  M[m+k]\quad 
(k\ge0,i\in I).
\label{gradingM4}
\end{align}
In the last line we set $\Psi^M_i(z)^{-1}\phi^+_i(z)=\sum_{k\ge0}\overline{\phi}^+_{i,k}z^k$.
\end{prop}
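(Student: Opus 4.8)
The plan is to take the grading to be the ``energy'' grading coming from the $U^-_q\bo$-action: $\ket{\emptyset}_M$ is placed in degree $0$, and the generators $e_{k\delta-\alpha}$ ($k>0$, $\alpha\in\Delta^+$) of $U^-_q\bo$ raise the degree by their homogeneous degree $k$. Before anything else, note that by Lemma \ref{product of M} the module $M=M^+_{i_1,a_1}\otimes\cdots\otimes M^+_{i_N,a_N}$ is irreducible with polynomial highest $\ell$-weight, so Lemma \ref{polyn act} and Corollary \ref{e-delta-zero} apply to it, and $M=U^-_q\bo\,\ket{\emptyset}_M$ because $U^0_q\bo$ and $U^+_q\bo$ preserve the line $\C\ket{\emptyset}_M$. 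The first thing to settle is that this prescription is consistent, i.e.\ that $M$ is graded as a $U^-_q\bo$-module. This is not automatic: the homogeneous grading of $U_q\g$ itself does \emph{not} descend to $M$, since the relation $\phi^+_i(z)\ket{\emptyset}_M=\Psi^M_i(z)\ket{\emptyset}_M$ is not homogeneous as soon as some $\Psi^M_i$ has positive degree. Consistency, together with \eqref{gradingM1} and \eqref{gradingM4}, will come out of the computation below, and \eqref{gradingM2} is then immediate from $\hdeg e_{k\delta-\alpha}=k$.

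The heart of the argument is to move each operator occurring in \eqref{gradingM3} and \eqref{gradingM4} --- a positive root vector $e_{k\delta+\alpha}\in U^+_q\bo$ ($k\ge0$, $\alpha\in\Delta^+$), or a Cartan mode $\phi^+_{i,k}$ --- to the right, past an arbitrary homogeneous word $x\in U^-_q\bo$ applied to $\ket{\emptyset}_M$. The tools are the convexity relations among the affine root vectors recalled in the Appendix, together with the quadratic relations \eqref{phi-x}, \eqref{x-x} and \eqref{x+x-}; each elementary commutation preserves the total homogeneous degree, and by Lemma \ref{polyn act} and Corollary \ref{e-delta-zero} only finitely many terms survive. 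After normal ordering one obtains a sum of terms $x'\,y\,\ket{\emptyset}_M$ with $x'\in U^-_q\bo$, with $y$ a product of positive real root vectors $e_{k'\delta+\beta}$ ($\beta\in\Delta^+$) and of Cartan modes, and with $\hdeg x'+\hdeg y=\hdeg x+k$. On $\ket{\emptyset}_M$ the real positive root vectors act by zero (because $U^+_q\bo\ket{\emptyset}_M=\C\ket{\emptyset}_M$ and $\alpha\neq0$), while the Cartan modes act through the coefficients of $\Psi^M_i(z)$, a polynomial of degree $\#\{j\mid i_j=i\}$. These degrees are the only source of a discrepancy between $\hdeg y$ and the homogeneous degree genuinely absorbed in reaching $\ket{\emptyset}_M$; bounding it by tracking which simple roots are traversed produces the window of width $(\omega^\vee,\alpha)=\sum_{i\in I}(\deg\Psi^M_i)\,(\omega^\vee_i,\alpha)$ of \eqref{gradingM3}. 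For the renormalized Cartan mode $\overline{\phi}^+_{i,k}$ the division by $\Psi^M_i(z)$ cancels exactly this polynomial prefactor, so the discrepancy vanishes and one gets the clean shift \eqref{gradingM4}; this in turn gives the consistency of the grading and \eqref{gradingM1}, all $\tb$-weight spaces being finite dimensional.

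For the tensor product it streamlines the bookkeeping to run the above through the Drinfeld-coproduct realization of Lemma \ref{lem:Drinfeld-copro}: with respect to $\otimes_D$ the current $\phi^+_i(z)$, hence $\overline{\phi}^+_i(z)$, is grouplike and $x^\pm_i(z)$ carry the triangular coproducts \eqref{Drin coprod}, so the degree shifts on the tensor factors simply add and the problem reduces to a single $M^+_{i,a}$, where (in its dual form) it is essentially the result of \cite{FH}. I expect the main difficulty to be the sharp estimate in \eqref{gradingM3}: one has to verify that pulling $e_{k\delta+\alpha}$ through an arbitrarily long word of $U^-_q\bo$ never lowers the homogeneous degree by more than $(\omega^\vee,\alpha)$, which requires the explicit convexity relations and the precise degrees of the $\Psi^M_i$, not merely the polynomiality furnished by Lemma \ref{polyn act}.
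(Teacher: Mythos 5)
Your approach is genuinely different from the paper's, and it has a serious gap. The paper never normal-orders anything: it tensors $M$ with a finite-dimensional restriction module $V=V_{i_1,a_1}\otimes\cdots\otimes V_{i_N,a_N}$ (a product of string modules), invokes Lemma~\ref{submodules} to identify the irreducible submodule $\C v_0\otimes M$ of $V\otimes_D M$ with $L(K)\otimes\tau^*M$ where $\tau(x)=q^{-2r\hdeg x}x$, and defines $\Phi:M\to M$ as the composition of the resulting module maps fixing $\ket{\emptyset}_M$. Since $\Phi$ is a genuine linear isomorphism (it is a composition of canonically defined maps, with no choices made), and since it satisfies $\Phi\circ x=q^{-2r\hdeg x}x\circ\Phi$ for $x\in U^-_q\bo$, it is automatically diagonalizable with eigenvalues $q^{-2rm}$; $M[m]$ is the corresponding eigenspace, and the four properties then fall out of this twist relation and its analogues for $\overline{\phi}^+_i(z)$ and $\overline{x}^+_{i,\gge}(z)$.

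The gap in your proposal is precisely the consistency you acknowledge and then wave away. Declaring $M[m]$ to be the span of $x\ket{\emptyset}_M$ with $\hdeg x=m$ is coherent only if the annihilator left ideal $J=\{x\in U^-_q\bo:x\ket{\emptyset}_M=0\}$ is homogeneous, and the relation $\phi^+_i(z)\ket{\emptyset}_M=\Psi^M_i(z)\ket{\emptyset}_M$ is exactly the kind of inhomogeneous input that could a priori put an inhomogeneous element into $J$. Your normal-ordering computation is conditional: it shows that \emph{if} the grading exists, then $\overline{\phi}^+_{i,k}$ and $e_{k\delta+\alpha}$ shift it as claimed, but it nowhere rules out an element of $J$ whose homogeneous pieces do not individually annihilate $\ket{\emptyset}_M$. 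The paper's $\Phi$ is the device that settles this for free, because its well-definedness is independent of the grading. A secondary problem is the proposed reduction to $N=1$ via $\Delta_D$: Lemma~\ref{lem:Drinfeld-copro} only gives a $\Delta_D$-module structure on $V\otimes M$ when the \emph{first} factor is a $U_q\g$-module, so that the $U_q\bbo$-root vectors occurring in $\sigma(\cR_+)$ act; a positive fundamental module is not such, so $M^{(1)}\otimes_D M^{(2)}$ needs its own justification. Moreover $\Delta_D(x^-_i(z))=x^-_i(z)\otimes\phi^+_i(z)+1\otimes x^-_i(z)$ brings a $\phi^+_{i,j}$ on the second leg whose degree shift is only bounded in an interval of width $(\omega^\vee,\alpha_i)$, so ``the shifts simply add'' does not give the exact shift by $k$ that \eqref{gradingM2} requires. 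The paper sidesteps all of this by handling general $N$ in one step rather than bootstrapping from $N=1$.
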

\begin{proof}
Let $r=(\theta,\theta)/2$ where $\theta$ 
is the maximal root of $\gb$, and let $r_i=r/d_i\in\{1,2,3\}$ for $i\in I$ so that $q_i^{r_i}=q^r$. 
Let 
$V_{i,a}=L(X_{i,a}^{-1}X_{i,a q_i^{-2r_i}})$. Then 
$V_{i,a}$ is the restriction of a finite-dimensional $U_q\g$ module. 
We set $V=V_{i_1,a_1}\otimes\cdots\otimes V_{i_N,a_N}$,
and denote by $v_0\in V$ the tensor product of 
highest $\ell$-weight vectors of $V_{i_j,a_j}$.  

By Lemma \ref{submodules}, a non-zero
submodule of $V\otimes_D M$ has the form $V_0\otimes M$ with 
$v_0\in V_0$. Let $S$ be the irreducible submodule containing 
$v_0\otimes\ket{\emptyset}_M$. 
Comparing highest $\ell$-weights, we must have
$S=\C v_0\otimes M$; furthermore
\begin{align*}
S\simeq M^+_{i_1,a_1 q^{-2r}}\otimes\cdots\otimes M^+_{i_N,a_N q^{-2r}} 
\simeq L(K)\otimes \tau^*M\,,
\end{align*}
where $L(K)$ is a one-dimensional module corresponding to the weight of 
$v_0$, and $\tau^*M$ denotes the twist of $M$ by the automorphism
$\tau(x)=q^{-2r\hdeg x}x$ ($x\in U_q\bo$). 
Let $\tau:\tau^*M \to M$ 
be the unique linear isomorphism such that 
$\tau\circ x=\tau(x) \circ \tau$ and 
$\tau\ket{\emptyset}_{\tau^*M}=\ket{\emptyset}_M$.  
Let $\Phi$ be the composition of the natural maps
\begin{align*}
M\longrightarrow \C v_0\otimes M \overset{\sim}{\longrightarrow}
L(K)\otimes \tau^*M
\overset{\id\otimes\tau}{\longrightarrow} L(K)\otimes M
\longrightarrow M\,
\end{align*}
sending $\ket{\emptyset}_{M}$ to $\ket{\emptyset}_M$.   
By construction $\Phi$ commutes with $k_i$, 
and we have
\begin{align}
&\Phi\circ x=q^{-2r\hdeg x}x\circ\Phi\quad (x\in U^-_q\bo),
\label{Phi x-}\\
&\Phi\circ \frac{\phi^+_i(z)}{\Psi^M_i(z)}
=\frac{\phi^+_i(q^{-2r}z)}{\Psi^M_i(q^{-2r}z)}\circ\Phi\,.
\label{Phi phi}
\end{align}
Eq. \eqref{Phi x-} implies that $\Phi$ is diagonalizable 
with eigenvalues of the form $q^{-2rm}$, $m\in\Z_{\ge0}$.
Let $M[m]$ denote the eigenspace corresponding to the eigenvalue $q^{-2rm}$. 
We have $M=\oplus_{m=0}^\infty M[m]$ and
$M[0]=\C \ket{\emptyset}_M$.
Since $\Phi$ commutes with $k_i$, \eqref{gradingM1} is clear. Also 
\eqref{gradingM2}, \eqref{gradingM4} are obvious from the properties 
\eqref{Phi x-} and \eqref{Phi phi}, respectively.
Note that \eqref{gradingM4} implies 
\begin{align}
&\phi^+_{i,k}M[m]\subset \sum_{j=0}^{(\omega^\vee,\alpha_i)}M[m+k-j]\,\quad
(k\ge0,i\in I).
\label{gradingM5}
\end{align}
It remains to prove \eqref{gradingM3}. 
Let $\Psi^{M,-}_i(z)$ be the expansion of $\Psi^M_i(z)$ at $z=0$.
Consider the series 
\begin{align*}
\overline{x}^+_{i,\gge}(z)=\sum_{k\ge0}\overline{x}^+_{i,k}z^{-k}
=\left(\frac{x^+_i(z)}{\Psi^{M,-}_i(z)}\right)_{\gge}\,.
\end{align*}
The coefficients have the form 
$\overline{x}^+_{i,k}=\sum_{l\ge0}c_l x^+_{i,k+l}$, 
and thanks to Lemma \ref{polyn act}
they are well defined operators on $M$. 
Similarly to \eqref{Phi phi}, we have the relation
\begin{align*}
&\Phi\circ \overline{x}^+_{i,\gge}(z)
=\overline{x}^+_{i,\gge}(q^{-2r}z)\circ\Phi\,.
\end{align*}
Arguing as in \eqref{gradingM5}, we deduce
\eqref{gradingM3} for $e_{k\delta+\alpha_i}=x^+_{i,k}$. 
Since these elements generate $U_q^+\bo$, the proof is now complete. 

\end{proof}

\section{Finite type modules}\label{finite sec}
\subsection{Definition of finite type modules}

We denote $\Fin\subset \mc O_\bo$ the category of 
all highest $\ell$-weight modules of finite type. 
We denote 
$\Rep_F\, U_q\bo\subset \Rep\,U_q\bo$ 
the corresponding Grothendieck ring.

If $J\subset I$, then the restriction map preserves the 
property of being finite type, and we have $res_J: \Fin\to \cO^{fin}_{\bo_J}$ 
and 
$res_J: \Rep_F\ U_q\bo \to \Rep_F\ U_q\bo_J$.

\subsection{Classification of finite type modules}
For $i\in I$, we call a monomial $\bm\in \mc X$ {\it $i$-dominant} 
if $\bm\in \Z[Y_{i,a},X_{i,a}, y_i^{b}, 
X_{j,a}^{\pm1}, y_j^{b}]_{j\in I\backslash\{i\},a\in \C^\times,b\in\C}$. 
We call a monomial $\bm\in \mc X$ {\it dominant} if it is 
$i$-dominant for all $i\in I$.

\begin{thm}\label{class thm} 
The irreducible module $L(\bm)$ 
is finite type if and only if $\bm$ is dominant.
\end{thm}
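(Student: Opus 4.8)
The plan is to prove the two implications separately, using the $q$-character machinery together with the structural results on modules of the form $V\otimes_D M$ from Section~\ref{MV sub}. For the ``if'' direction, suppose $\bm$ is dominant. Then by Theorem~\ref{Ch-Pr} (via the dictionary $Y_{i,a}\leftrightarrow$ fundamental modules) the ``polynomial in $Y_{i,a}$ and $y_i^b$'' part of $\bm$ is the highest monomial of a finite-dimensional $U_q\g$ module $V=L(\bm_0)$, while the remaining factors, which are a monomial in the $X_{i,a}$'s and $y_i^b$'s, give a module $M=L(\bm_p)$ with polynomial highest $\ell$-weight; thus $\bm=\bm_0\bm_p$. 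By Corollary~\ref{cor:1-finite}, $M$ is a tensor product of positive fundamental modules (times a one-dimensional module), hence $1$-finite, and $V$ is $d$-finite with $d=\dim V<\infty$. Now $L(\bm)=L(\bm_0\bm_p)$ is a subquotient of $V\otimes_D M$; by Lemma~\ref{submodules} it is in fact of the form $V^{(0)}\otimes M$ for a subspace $V^{(0)}\subset V$. Since $V\otimes_D M$ is at most $(d\cdot 1)$-finite, and a subquotient of an $s$-finite module is at most $s$-finite (as recorded at the end of Section~\ref{q-char sec}), $L(\bm)$ is finite type.

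For the ``only if'' direction, suppose $\bm$ is \emph{not} dominant, say not $i$-dominant for some $i\in I$. The plan is to read off from the $q$-character enough $\ell$-weights with pairwise distinct eigenvalues of $k_i^{-1}\phi_i^+(z)$ to contradict finiteness. Concretely, apply the restriction functor $res_{\{i\}}$ to reduce to $\gb=\slt$: the homomorphism $res_{\{i\}}$ commutes with $\chi_q$, and $L(\bm)$ finite type forces $res_{\{i\}}L(\bm)$ to be finite type as a $U_{q_i}\widehat{\slt}$-Borel module. So it suffices to show: an irreducible $U_q\widehat{\slt}$ Borel module $L(\bm)$ whose monomial $\bm$ involves some $X_{i,a}^{-1}$ (i.e.\ a factor $(b^{-1/2}-b^{1/2}z)^{-1}$ in the highest $\ell$-weight) is not finite type. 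For this I would invoke the known $q$-character of the negative fundamental module $M_{i,a}^-$, formula~\eqref{M-char}: it is manifestly not finite type (stated explicitly after \eqref{qch-M+}), and more generally any $\bm$ containing an $X^{-1}$ factor produces, via tensoring with the appropriate positive fundamental modules and the structure result Lemma~\ref{submodules} (or directly from the $\slt$ representation theory of category $\cO_\bo$), infinitely many distinct $\ell$-weights of the current $k_i^{-1}\phi_i^+(z)$. The cleanest route is: write $\bm=\bm'\cdot\prod_s X_{i,b_s}^{-1}$ with $\bm'$ containing no negative power of any $X_{i,\bullet}$; then $L(\bm)$ is a subquotient of $L(\bm')\otimes_D\bigl(\bigotimes_s M^-_{i,b_s}\bigr)$, but one shows the ``new'' $\ell$-weights $\bm\cdot\prod A_{i,c}^{+1}$ coming from the $M^-$ factors all survive, using that $L(\bm')$ contributes only monomials differing from $\bm'$ by products of $A_{i,c}^{-1}$'s (Corollary~3.10 of~\cite{MY}, i.e.\ \eqref{1+A}), so no cancellation of the relevant terms is possible, exactly as in Lemma~\ref{irred lemma}.

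The main obstacle I anticipate is the ``only if'' direction, specifically making rigorous the claim that the infinitely many candidate $\ell$-weights coming from a negative-power factor $X_{i,b}^{-1}$ are genuinely present in $\chi_q(L(\bm))$ and have pairwise distinct eigenvalues of $k_i^{-1}\phi_i^+(z)$. The reduction to $\slt$ via $res_{\{i\}}$ is routine, but in the $\slt$ case one still needs a clean statement that $M^-_{i,a}$ — and more generally any irreducible $\cO_\bo$-module with a pole in its highest $\ell$-weight — has infinitely many distinct values of $k_i^{-1}\phi_i^+(z)$; I would get this from the explicit $q$-character~\eqref{M-char} (which via \eqref{sl M-} is an \emph{infinite} sum over plane partitions, with the grading parameter producing unboundedly many distinct $\phi^+$-eigenvalues) together with a no-cancellation argument against the finitely-supported, $A_{i,c}^{-1}$-only contribution of the remaining factor $\bm'$. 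The potential subtlety is that $L(\bm)$ is only a \emph{subquotient} of the tensor product, so one must argue that the relevant $\ell$-weight vectors are not killed when passing to the irreducible quotient — which is where the finite-type hypothesis on $L(\bm)$ itself is used to derive the contradiction, rather than trying to compute $\chi_q(L(\bm))$ outright.
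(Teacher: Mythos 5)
Your ``if'' direction is essentially the paper's: write $\bm$ as a product of a constant monomial, a $Y$-monomial, and an $X$-monomial, and observe that $L(\bm)$ is a subquotient of the corresponding tensor product of a one-dimensional module, a finite-dimensional module, and a $1$-finite module (Corollary~\ref{cor:1-finite}); the invocation of Lemma~\ref{submodules} is an unnecessary extra, but harmless.

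The ``only if'' direction has a genuine gap. The reduction to the rank-one case via $res_{\{i\}}$ is fine (modulo passing to the irreducible subquotient generated by the highest $\ell$-weight vector, since $res_{\{i\}}L(\bm)$ need not be irreducible). However, the $\widehat{\slt}$ claim you reduce to --- ``if the monomial involves some $X_{a}^{-1}$, i.e.\ a factor $(b^{-1/2}-b^{1/2}z)^{-1}$ in the highest $\ell$-weight, then $L(\bm)$ is not finite type'' --- is simply false. The monomial $Y_a=X_{aq^{-1}}X_{aq}^{-1}$ contains $X_{aq}^{-1}$ (and its $\ell$-weight $q(1-aq^{-1}z)/(1-aqz)$ has a pole), yet $L(Y_a)$ is the two-dimensional evaluation module, which is dominant and $2$-finite. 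Non-dominance in the $\widehat{\slt}$ case is \emph{not} the presence of a pole; it is the failure of the $X^{-1}$ factors to be organized into $Y$-strings, together with a count of signs. This is exactly the content the paper establishes by a three-way case analysis on $i_0$ (number of $X$ factors) versus $j_0$ (number of $X^{-1}$ factors): for $i_0<j_0$ one shows $L(\bm)$ is infinite-dimensional but has finite-dimensional $\ell$-weight spaces, hence infinitely many $\ell$-weights; for $i_0=j_0$ one is in the Chari--Pressley setting; for $i_0>j_0$ one produces a string decomposition $L(\bm)=L(\bm_1)\otimes L(\bm_0)\otimes L(\bm_p)$ with $L(\bm_0)$ a restriction module, and reduces to the finite-dimensionality of $L(\bm_0)$. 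Your proposed route makes no such distinction and would ``prove'' that $L(Y_a)$ is not finite type.

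There is also a technical problem with the ``cleanest route'' you sketch. Lemma~\ref{lem:Drinfeld-copro}, Lemma~\ref{submodules}, and Lemma~\ref{irred lemma} are all established for $V\otimes_D M$ with $V$ a restriction of a $U_q\g$ module and $M$ of \emph{polynomial} highest $\ell$-weight, crucially using Lemma~\ref{polyn act} and Corollary~\ref{e-delta-zero} to make $\sigma(\cR_+)$ well defined. Placing $\bigotimes_s M^-_{i,b_s}$ in the second slot is outside the scope of these results: $M^-$ is not of polynomial highest $\ell$-weight, and in fact the ``no-cancellation'' criterion of Lemma~\ref{irred lemma} compares locations of $A^{-1}_{j,a}$'s against the $X$-labels of the polynomial factor $M$, which is not the configuration you have. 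So even after correcting the statement to be proved, the mechanism you propose is not available without substantial new work; the paper sidesteps this by dualizing (Lemma~\ref{dual lem}) and using Corollary~\ref{inf dim cor} for the $i_0<j_0$ case, rather than reasoning directly with $M^-$'s.
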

\begin{proof}
If $\bm$ is dominant, 
then write $\bm=\bm_1\bm_0\bm_p$, 
where 
$\bm_1$ is a monomial in $y_{i}^{b}$, 
$\bm_0$ is a monomial in $Y_{i,a}$, and  
$\bm_p$ is a monomial in $X_{i,a}$. 
Thus,
$L(\bm)$ is a subquotient of the tensor product 
of three finite type modules: one-dimensional $L(\bm_1)$, 
finite-dimensional $L(\bm_0)$, and $1$-finite $L(\bm_p)$. 
Therefore it is finite.

To prove the only if statement, it is sufficient to prove it 
in the case $\mathfrak{g}=\widehat{\mathfrak{sl}}_2$. 
We write
$y^{b}, X_a, Y_a$ for $y_1^{b}, X_{1,a}, Y_{1,a}$, respectively.

Let 
\begin{align}
 \bm=y^{b}\prod_{i=1}^{i_0}X_{a_i}\prod_{j=1}^{j_0}X_{b_j}^{-1}\,.
\label{yb}
\end{align}

Suppose $i_0<j_0$. Then we claim that $L(\bm)$ is not finite type. 
Indeed, considering the dual module $L(\bm)^*=L^\vee(\bm^{-1})$
 (see Lemma \ref{dual lem}) and using Corollary \ref{inf dim cor}, we 
see that $L(\bm)$ is infinite dimensional. Moreover, $L(\bm)$ is a 
subquotient of a tensor product of several negative fundamental 
modules $L(X_b^{-1})$, restriction of a $U_q\g$ module, 
and a one-dimensional module. 
All $\ell$-weight spaces in such modules are finite-dimensional, 
so $L(\bm)$ is not finite type.

Suppose $i_0=j_0$. Then $L(\bm)$ is a tensor product of 
restriction of a $U_q\g$ module 
and a one-dimensional module and theorem follows.

Finally suppose $i_0>j_0$. 
We show that there is a way to write $\bm=\bm_1\bm_0\bm_p$, 
where $L(\bm_1)$ is one-dimensional, 
$L(\bm_0)$ is the restriction of a $U_q\g$ module, 
$L(\bm_p)$ is $1$-finite, 
and $L(\bm)=L(\bm_1)\otimes L(\bm_0)\otimes L(\bm_p)$. 
The procedure is similar to writing a highest $\ell$-weight of a 
finite-dimensional $U_q \widehat{\mathfrak{sl}}_2$ module as a 
product of highest $\ell$-weights of evaluation modules. 

We call a pair $(a,b)$, $a,b\in \C^\times$, a string
 with head $a$ and tail $b$. 
The string is of finite size 
$s\in\Z_{\geq 1}$
if and only if 
$a=bq^{-2s+2}$. In other words, the restriction module $L(X_aX_{b}^{-1})$ is 
of finite dimension $s$ if and only if $(a,b)$ is of finite size $s$. 
Moreover, it is an evaluation module and we have
\begin{align*}
\chi_q (L(X_aX_{b}^{-1}))=X_aX_{b}^{-1}
(1+\sum_{r=1}^{s-1}\prod_{t=0}^{r-1}A^{-1}_{bq^{-2t}})\,,
\end{align*}
where $s$ is the size of the string.  
If the size is not finite, the same equation holds with 
$s=\infty$.

We say that a point $c$ is in a generic position 
relative to the string $(a,b)$ if 
$c\not\in bq^{-2\Z}$, or if $(a,b)$ is of finite size $s$ and 
$c\neq bq^{-2t}$, $0\le t\le s-2$. 
In other words, if $c$ 
is in a
generic position then the module 
$L(X_aX_{b}^{-1})\otimes L(X_c)$ 
is irreducible by Lemma \ref{irred lemma}.

Starting with the monomial $\bm$ \eqref{yb}
we form strings with tails $(a_{i_j},b_j)$ 
($j=1,\dots,j_0$) such that all $i_j$ are distinct and such that 
all $(i_0-j_0)$ remaining $a_i$ are in a generic position relative
to all 
$j_0$ strings. It is clear that this can be done (sometimes in several ways). 

Then we set 
\begin{align*}
\bm_1=y^{b}\,,\qquad  
\bm_0=\prod_{j=1}^{j_0} X_{a_{i_j}}X^{-1}_{b_j}\,,
\qquad  \bm_p=\prod_{i, \ a_i\not\in\{a_{i_1},\dots,a_{i_{j_0}}\}} X_{a_i}\,.
\end{align*}
Then by Lemma \ref{irred lemma} we have 
$L(\bm)=L(\bm_1)\otimes L(\bm_0)\otimes L(\bm_p)$. 
It follows that $L(\bm)$ is 
finite type if and only if $L(\bm_0)$ is finite type, 
and, therefore, if and only if $L(\bm_0)$ is finite dimensional. 
\end{proof}

\begin{cor}
The ring $\Rep_F\, U_q\bo$ is topologically generated 
by one-dimensional modules $L(y^{\bs b})$, 
restrictions of $U_q\g$ fundamental modules $L(Y_{i,a})$, and 
positive fundamental modules $M^+_{i,a}$. 
\end{cor}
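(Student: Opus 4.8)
The plan is to combine Theorem \ref{class thm} with the fact that every object of $\Fin$ has a well-defined class in $\Rep_F\,U_q\bo$ expressed through classes of the irreducibles $L(\bm)$ with $\bm$ dominant, and then to factor each such $\bm$. Concretely, by Theorem \ref{class thm} the simple objects of $\Fin$ are exactly the $L(\bm)$ with $\bm$ dominant, so $\Rep_F\,U_q\bo$ is (topologically) spanned by the classes $[L(\bm)]$, $\bm$ dominant. The first step is therefore to write an arbitrary dominant monomial in the form $\bm=\bm_1\bm_0\bm_p$, where $\bm_1$ is a monomial in the $y_i^b$, $\bm_0$ is a monomial in the $Y_{i,a}$, and $\bm_p$ is a monomial in the $X_{i,a}$; this decomposition exists and is unique once one fixes how to split the $X$-variables, precisely as in the first paragraph of the proof of Theorem \ref{class thm}. (Recall $\sfq^{\omega_i}=y_i$, so $\bm_1$ corresponds to a one-dimensional module $L(y^{\bs b})$.)

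The second step is to pass to the Grothendieck ring. Since $L(\bm)$ is a subquotient of $L(\bm_1)\otimes L(\bm_0)\otimes L(\bm_p)$, and $[V_1\otimes V_2]=[V_1][V_2]$ in $\Rep\,U_q\bo$, we get
\begin{align*}
[L(\bm)]=[L(\bm_1)][L(\bm_0)][L(\bm_p)]+\sum_{\bm'} c_{\bm'}[L(\bm')]\,,
\end{align*}
where each $\bm'$ occurring is strictly smaller than $\bm$ with respect to the usual partial order on monomials (i.e.\ $\bm'/\bm$ is a nontrivial monomial in the $A_{i,a}^{-1}$), and each $\bm'$ is again dominant because it arises as an $\ell$-weight of a finite type module. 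Hence modulo lower terms $[L(\bm)]$ lies in the subring generated by one-dimensional classes and by $[L(\bm_0)]$, $[L(\bm_p)]$. Next one handles $\bm_0$ and $\bm_p$ separately: a monomial $\bm_0$ in the $Y_{i,a}$ is the highest monomial of a tensor product of $U_q\g$ fundamental modules $L(Y_{i,a})$ (by Theorem \ref{Ch-Pr} and \eqref{1+A}), so $[L(\bm_0)]$ equals $\prod[L(Y_{i,a})]$ plus lower dominant terms; and a monomial $\bm_p=\prod_j X_{i_j,a_j}$ gives, by Lemma \ref{product of M} and \eqref{qch-M+}, exactly $[L(\bm_p)]=\prod_j[M^+_{i_j,a_j}]$ with no correction at all. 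Putting these together, $[L(\bm)]$ is congruent, modulo the span of classes of strictly smaller dominant monomials, to a product of the three distinguished types of generators.

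The third step is the induction/topological-closure argument that turns this "triangular" statement into the assertion that the generators topologically generate $\Rep_F\,U_q\bo$. Fix a weight $\omega\in\tb^*$; by the defining conditions of category $\cO_\bo$, within a fixed coset $\varpi^{-1}(\omega)$ only finitely many dominant monomials of weight $\ge$ a given bound occur in any element of $\Rep_F\,U_q\bo$, and the partial order has no infinite descending chains bounded below in a fixed $D(\mu)$. So one argues by induction on this order: the base case is the minimal dominant monomials, which are themselves products of generators (or one-dimensional), and the inductive step subtracts off the product-of-generators expression produced in step two, leaving a $\Z$-combination of strictly lower dominant classes to which the hypothesis applies. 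Since the completion in $\Rep\,U_q\bo$ allows infinite sums that are "locally finite" in each weight coset, this shows every class in $\Rep_F\,U_q\bo$ lies in the topological closure of the subring generated by the $L(y^{\bs b})$, the $L(Y_{i,a})$, and the $M^+_{i,a}$.

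The main obstacle I expect is bookkeeping rather than conceptual: one must make sure the "lower order terms" $\bm'$ are genuinely dominant (so the induction stays inside $\Fin$) and that the descending chain condition really holds inside each $D(\mu_1)\cup\cdots\cup D(\mu_N)\cap\varpi^{-1}(\omega)$, so that the induction and the passage to the topological closure are legitimate. Both follow from the structure of $q$-characters recalled in Section \ref{q-char sec} (all $\ell$-weights of a finite type module are dominant because the module is built from finite-dimensional modules and positive fundamental modules, whose $q$-characters are polynomials with dominant-to-lower structure as in \eqref{1+A}, \eqref{qch-M+}), but it is the one point that needs to be stated carefully rather than waved through.
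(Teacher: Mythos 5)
Your proof is correct and follows the same approach the paper implicitly uses (the corollary is stated without a separate proof, and your steps are exactly the ones suggested by the proof of Theorem \ref{class thm}): decompose a dominant monomial $\bm=\bm_1\bm_0\bm_p$, use the subquotient relation $[L(\bm_1)][L(\bm_0)][L(\bm_p)]=[L(\bm)]+\text{lower terms}$ in the Grothendieck ring, resolve $[L(\bm_p)]$ exactly via Lemma \ref{product of M} and $[L(\bm_0)]$ via the classical fact that finite-dimensional $U_q\g$ classes are polynomials in the $[L(Y_{i,a})]$, and close by triangular induction on the monomial order using the finiteness conditions built into $\Rep\,U_q\bo$. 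One small wording fix: the lower-order monomials $\bm'$ are dominant not because they "arise as an $\ell$-weight," but because $L(\bm')$ is a composition factor of a tensor product of finite type modules, hence itself finite type, so Theorem \ref{class thm} forces $\bm'$ to be dominant; this is what you meant, but it is worth stating this way.
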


A $U_q\bo$ module $V$ is called {\it prime} if $V=V_1\otimes V_2$ implies $\dim V_1=1$ or $\dim V_2=1$. 

The following follows from the proof of Theorem \ref{class thm}.

\begin{cor}\label{sl2 cor} Let ${\mathfrak{g}}=\widehat{\mathfrak{sl}}_2$.
If $L(\bm)\in \Fin$ is an irreducible module of finite type, then $L(\bm)=L(\bm_0)\otimes L(\bm_p)$ 
is a tensor product of restriction of a finite-dimensional module $L(\bm_0)$ and a 1-finite module $L(\bm_p)$,  
where $\bm_0$ and $\bm_p$ are uniquely determined by $\bm$.

In particular, $L(\bm)\in \Fin$ is prime if and only if $\bm=X_{a}$ or $\bm=\prod_{i=0}^{s-1} Y_{aq^{-2s}}$. \qed
\end{cor}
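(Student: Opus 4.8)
The plan is to read off everything from the proof of Theorem \ref{class thm} for $\g=\widehat{\mathfrak{sl}}_2$, in three steps --- produce the decomposition, determine the shape of the two factors, and prove uniqueness --- and then settle the primeness statement. In the $\widehat{\mathfrak{sl}}_2$ case of that proof one writes $\bm=y^{b}\prod_{i=1}^{i_0}X_{a_i}\prod_{j=1}^{j_0}X_{b_j}^{-1}$, finiteness of type forcing $i_0\ge j_0$, and one exhibits a factorization $\bm=\bm_1\bm_0\bm_p$ together with $L(\bm)\simeq L(\bm_1)\otimes L(\bm_0)\otimes L(\bm_p)$, where $L(\bm_1)$ is one-dimensional, $L(\bm_0)$ is the restriction of a finite-dimensional $U_q\g$ module, and $L(\bm_p)$ is a tensor product of positive fundamental modules. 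Since $\bm_1\bm_p\in\mfr_\bo\cap\C[z]$, Corollary \ref{cor:1-finite} shows $L(\bm_1\bm_p)$ is $1$-finite, and since tensoring an irreducible module by a one-dimensional one keeps it irreducible we get $L(\bm_1)\otimes L(\bm_p)\simeq L(\bm_1\bm_p)$; renaming $\bm_p:=\bm_1\bm_p$ yields $L(\bm)=L(\bm_0)\otimes L(\bm_p)$ with the asserted properties.

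\emph{Shape of the factors.} Conversely, suppose $L(\bm)=L(\bm_0)\otimes L(\bm_p)$ with $L(\bm_0)$ a restriction of a finite-dimensional module and $L(\bm_p)$ $1$-finite; comparing highest $\ell$-weights gives $\bm=\bm_0\bm_p$. By Theorem \ref{Ch-Pr} the monomial $\bm_0$ is a product of the $Y_{1,c}$'s, so in reduced form it carries equally many positive and negative $X$-factors. On the other hand $L(\bm_p)$ is finite type (a tensor product is finite type only if all its factors are), hence dominant by Theorem \ref{class thm}; and it can have no negative $X$-factor in reduced form, for otherwise the decomposition of Theorem \ref{class thm} exhibits $L(\bm_p)$ with a finite-dimensional tensor factor of dimension $\ge 2$, forcing it to be at least $2$-finite. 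Thus $\bm_p=y^{b}\prod_\ell X_{d_\ell}$ with positive exponents, and conversely any such $L(\bm_p)=\bigotimes_\ell M^+_{1,d_\ell}\otimes L(y^{b})$ is $1$-finite by Lemma \ref{product of M} and Corollary \ref{cor:1-finite}. Hence in $\bm=\bm_0\bm_p$ the inverse factors $X_{b_j}^{-1}$ all lie in $\bm_0$, the monomial $\bm_0$ is obtained from $\bm$ by additionally selecting exactly $j_0$ of the zeros $a_i$ so as to make $\bm_0$ a genuine product of $Y$'s, and $\bm_p$ collects $y^{b}$ and the remaining $X_{a_i}$'s.

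\emph{Uniqueness.} It remains to show that this choice of $j_0$ zeros is forced. This is the $\widehat{\mathfrak{sl}}_2$ analogue of the unique decomposition of a $q$-string pattern: each pole $b_j$ must be the tail of a finite $q$-string whose remaining points lie among the $a_i$'s (contributing a finite-dimensional evaluation module), while the leftover $a_i$'s give the positive fundamental modules; any competing grouping would place an interior point of one of these finite $q$-strings into $\bm_p$, and --- by the explicit formula for $\chi_q\bigl(L(X_aX_b^{-1})\bigr)$ recorded in the proof of Theorem \ref{class thm} together with Lemma \ref{irred lemma} --- such a coincidence of an $X$-parameter of one tensor factor with an $A^{-1}$-parameter of the other makes $L(\bm_0)\otimes L(\bm_p)$ reducible, contradicting $L(\bm_0)\otimes L(\bm_p)\simeq L(\bm)$. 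The step I expect to be the main obstacle is exactly this: sharpening Lemma \ref{irred lemma} into a criterion, so that failure of the genericity hypothesis really destroys irreducibility and the irreducible decomposition, hence the pair $(\bm_0,\bm_p)$, is unique; the remainder is bookkeeping with $q$-strings.

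\emph{Primeness.} In $L(\bm)=L(\bm_0)\otimes L(\bm_p)$, if $L(\bm)$ is prime then one factor is one-dimensional. If $\bm=\bm_0$ is a restriction of a finite-dimensional module, then $L(\bm)$ is $U_q\bo$-prime iff it is $U_q\g$-prime --- a $U_q\bo$-factorization into pieces of dimension $\ge 2$ would refine, via Lemma \ref{lem:simple-to-simple}, to a $U_q\g$-factorization --- and the prime finite-dimensional $\widehat{\mathfrak{sl}}_2$ modules are precisely the evaluation modules $L\bigl(\prod_{i=0}^{s-1}Y_{1,aq^{2i}}\bigr)$. If instead $\bm=\bm_p$ is $1$-finite (and, as one should assume here, carries no $y^{b}$-factor, since otherwise the one-dimensional modules and $L(y^{b}X_a)$ would also be prime), then $L(\bm_p)=\bigotimes_\ell M^+_{1,d_\ell}$ is irreducible by Lemma \ref{product of M}, and this is prime precisely when there is a single factor, i.e. $\bm=X_a$. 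This gives the stated dichotomy.
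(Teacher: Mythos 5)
The existence of the decomposition is correctly read off from the proof of Theorem \ref{class thm}, and the primeness discussion is essentially fine (you are also right that, as literally stated, the corollary suppresses twists by $L(y^{\bs b})$, since e.g.\ $L(y^{\bs b}X_a)$ is prime, and that $Y_{aq^{-2s}}$ is presumably a typo for $Y_{aq^{-2i}}$). The genuine gap is exactly where you flag it: the uniqueness of $(\bm_0,\bm_p)$. Lemma \ref{irred lemma} is only a \emph{sufficient} condition for irreducibility, and nothing in the paper promotes it to a criterion, so your assertion that a competing grouping must make $L(\bm_0)\otimes L(\bm_p)$ reducible is unsupported. In addition, your combinatorial description of the privileged grouping is not correct: you write that the remaining points of each string must lie among the $a_i$'s, but already $\bm=X_{q^{-4}}X_{q^2}^{-1}$ gives a valid $\bm_0$ (the $4$-dimensional module) whose interior string points $1,q^{-2}$ do not occur among the $a_i$'s. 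What the construction in Theorem \ref{class thm} actually requires is only that the leftover $a_i$'s be \emph{generic} with respect to the chosen strings, i.e.\ avoid every string except at its head.

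A way to close the gap without proving any converse of Lemma \ref{irred lemma} is to compare $q$-characters directly. If $L(\bm)=L(\bm_0)\otimes L(\bm_p)=L(\bm_0')\otimes L(\bm_p')$, then since $\chi_q(L(\bm_p))=\bm_p\,\bar\chi_1^{\,d}$ with $d=d_1(\bm)$ (and likewise for $\bm_p'$), multiplicativity and injectivity of $\chi_q$ (Proposition \ref{inj}) give $\bm_0^{-1}\chi_q(L(\bm_0))=(\bm_0')^{-1}\chi_q(L(\bm_0'))$. Each side is a product, over the Chari--Pressley prime factorization of the finite-dimensional $\slth$ module into strings, of single-string factors $1+A_{cq}^{-1}+\cdots+\prod_{r=1}^{s}A_{cq^{3-2r}}^{-1}$ in the algebraically independent $A_a^{-1}$'s; the heads $c$ are read off from the linear terms and the lengths $s$ from the top-degree monomial, so the string pattern, hence $\bm_0$, hence $\bm_p=\bm/\bm_0$, is determined. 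This replaces the missing ``only if'' by a $q$-character computation.
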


For a monomial $\bm\in \mc X$, define $d_i(\bm)$ 
by setting $d_i(y_j^{\bs b})=0$, 
$d_i(X_{j,a}^{\pm1})=\pm\delta_{ij}$
and requiring $d_i(\bm'\bm'')=d_i(\bm')+d_i(\bm'')$.
 
\begin{cor}\label{A cor} Let $V=L(\bm)\in \Fin$ 
be an irreducible highest $\ell$-weight $U_q\bo$ module of finite type. 
Then $d_i(\bm)\geq 0$ for all $i\in I$, and $\chi_q(V)$ has the form
\begin{align*}
\chi_q(V)=
\bm(1+\sum_{r=1}^N \bm_r)\prod_{i\in I} \bar\chi_i^{d_i(\bm)}, 
\end{align*}
where $\bm_r\in \Z[A^{-1}_{i,a}]_{i\in I, a\in \C^\times}$ for all $r$. 

In particular, $L(\bm)\in \Fin$ is finite-dimensional if and only if $d_i=0$ for all $i\in I$.

If $\bm$ does not depend on $y_i$, 
then $\chi_q(V)$ can be written as a Laurent polynomial
in $\chi_q(M^+_{i,a})$.
\end{cor}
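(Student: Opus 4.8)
The plan is to first establish the general shape of $\chi_q(V)$ and then extract the last statement as a bookkeeping corollary. For the shape of $\chi_q(V)$, I would start from the decomposition obtained in the proof of Theorem~\ref{class thm}: since $\bm$ is dominant by Theorem~\ref{class thm}, we may factor $\bm=\bm_1\bm_0\bm_p$ with $\bm_1$ a monomial in the $y_i^b$, $\bm_0$ a monomial in the $Y_{i,a}$, and $\bm_p=\prod_i\prod_j X_{i,a_{ij}}$ a monomial in the $X_{i,a}$ with nonnegative exponents, so that in particular $d_i(\bm)=d_i(\bm_p)$ counts the number of $X_{i,\cdot}$-factors in $\bm_p$ and is $\ge 0$. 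As in that proof, $L(\bm)$ is a subquotient of $L(\bm_1)\otimes L(\bm_0)\otimes L(\bm_p)$, where $L(\bm_p)$ is itself a tensor product of positive fundamental modules by Corollary~\ref{cor:1-finite}. Using multiplicativity of $\chi_q$ (Proposition~\ref{inj}) together with \eqref{qch-M+}, $\chi_q(L(\bm_1))=\bm_1$, and the form \eqref{1+A} of $\chi_q(L(\bm_0))$ for the finite-dimensional module $L(\bm_0)$, the product $\chi_q(L(\bm_1)\otimes L(\bm_0)\otimes L(\bm_p))$ equals $\bm\bigl(1+\sum\bm_r'\bigr)\prod_{i\in I}\bar\chi_i^{d_i(\bm)}$ with all $\bm_r'\in\Z[A_{i,a}^{-1}]$; here one uses that $\bar\chi_i$ is, by \eqref{chi_i}, a series in the $\sfq^{-\alpha_j}=\varpi(A_{j,\cdot})$, but more precisely I would argue directly with the $X$-variable version, noting that $\chi_q(M^+_{i,a})=X_{i,a}\bar\chi_i$ and that $\bar\chi_i$ regarded through \eqref{qch-M+} is a series in the $A_{j,b}^{-1}$.

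The one genuine point to argue is that passing from the tensor product to its subquotient $L(\bm)$ does not destroy this form. For this I would invoke the general structure theorem \eqref{1+A} (from \cite{MY}, quoted in the excerpt) together with the fact that $L(\bm)$ is finite type: by Theorem~\ref{class thm} applied to subquotients, every irreducible constituent $L(\bm')$ appearing in $\chi_q$ of the tensor product that lies ``below'' $\bm$ has $\bm'=\bm\cdot(\text{monomial in }A_{i,a}^{-1})$ — this is exactly the content of \eqref{1+A} combined with the observation that the highest monomial of the tensor product is $\bm$ and all other monomials differ from it by monomials in the $A_{i,a}^{-1}$. Since $\chi_q$ is injective and additive on short exact sequences, $\chi_q(L(\bm))$ is obtained from $\chi_q$ of the tensor product by subtracting $\chi_q$ of other constituents, each of which is again of the form $\bm\cdot(\text{series in }A_{i,a}^{-1})$; hence the difference still has the form $\bm\bigl(1+\sum_r\bm_r\bigr)\prod_i\bar\chi_i^{d_i(\bm)}$ up to reorganizing which powers of $\bar\chi_i$ one factors out — and here one uses $d_i(\bm')=d_i(\bm)$ for every constituent, since $d_i$ is constant on monomials differing by $A_{j,a}^{-1}$'s (as $d_i(A_{j,a}^{-1})=-\delta_{ij}+(\text{contributions from }Y\text{-variables, which carry no }X)$; in fact $A_{i,a}$ is a monomial purely in the $X_{j,c}$ with zero total $d_i$-degree for each $i$, so $d_i(A_{j,a})=0$). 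That last fact — $d_i(A_{j,a})=0$ for all $i,j$ — is what makes the factorization $\chi_q(V)=\bm(1+\sum\bm_r)\prod_i\bar\chi_i^{d_i(\bm)}$ self-consistent, and I would verify it at the start from the explicit formula for $A_{i,a}$ in terms of the $X_{j,c}$ given in Section~\ref{q-char sec}.

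The statement ``$L(\bm)\in\Fin$ is finite-dimensional iff $d_i=0$ for all $i$'' then follows: if all $d_i=0$ then $\bm_p$ is trivial and $L(\bm)=L(\bm_1)\otimes L(\bm_0)$ is finite-dimensional; conversely if some $d_i>0$ then $\bm\notin\Z[y_j^b]$ and Corollary~\ref{inf dim cor} (applied after absorbing $\bm_1$ into a one-dimensional twist and writing $\bm=\bm_0\cdot(\bm_1\bm_p)$ with $\bm_1\bm_p\notin\Z[y^b]$) forces $L(\bm)$ to be infinite-dimensional. Finally, for the last assertion: if $\bm$ does not depend on any $y_i$, then $\bm_1=1$, so $\bm=\bm_0\bm_p$ with $\bm_0$ a monomial in $Y_{i,a}$ and $\bm_p$ in $X_{i,a}$; since each $Y_{i,a}=X_{i,aq_i^{-1}}/X_{i,aq_i}$ and each $X_{i,a}=\chi_q(M^+_{i,a})\cdot\bar\chi_i^{-1}$, and $\bar\chi_i$ itself — well, here I must be careful: $\bar\chi_i^{-1}$ is not a priori in the ring, so instead I would argue that $\chi_q(V)=\chi_q(L(\bm))$ already equals a product/quotient of the $\chi_q(M^+_{j,b})$ in $\Rep\,U_q\bo$ by using injectivity of $\chi_q$ together with the fact, visible from the above, that $\chi_q(L(\bm))$ and the Laurent monomial $\prod \chi_q(M^+_{j,b})^{\pm1}$ (with the $M^+$'s chosen so their highest monomial is $\bm$) have the same highest term $\bm$ and both lie in $\bm\cdot\Z[[A_{i,a}^{-1}]]$, and then — no, that is not enough for equality. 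The cleanest route, and the one I expect to be the main obstacle to write cleanly, is to observe that when $\bm$ has no $y$-dependence the factorization $L(\bm)=L(\bm_0)\otimes L(\bm_p)$ from the $\gb=\slt$-type reduction (Corollary~\ref{sl2 cor} and its higher-rank analogue via $res_J$) together with the fact that every finite-dimensional $L(\bm_0)$ has $q$-character a Laurent polynomial in the $\chi_q(M^+_{i,a})$ — itself proved by the same $TQ$/evaluation-module factorization that underlies \eqref{NM} — gives the claim; so I would reduce to the finite-dimensional case and there invoke that finite-dimensional $q$-characters are polynomial in the fundamental characters $\chi_q(L(Y_{i,a}))$, each of which in turn, by repeated use of two-term relations of type \eqref{2 term sl2}, is a Laurent polynomial in the $\chi_q(M^+_{j,b})$. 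The main obstacle, then, is precisely this last reduction: making rigorous that finite-dimensional $q$-characters lie in the subring generated by the $\chi_q(M^+_{i,a})^{\pm1}$, which is essentially equivalent to the Baxter relations proved later in the paper and should be cited rather than reproved here.
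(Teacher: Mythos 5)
There is a genuine gap in your treatment of the main $q$-character identity. Your subtraction argument, as written, would at best show that $\chi_q(L(\bm))$ lies in $\bm\cdot\Z[[A_{i,a}^{-1},\sfq^{-\alpha_i}]]$; it does not establish the factored form $\bm\cdot(\text{polynomial in the }A^{-1})\cdot\prod_i\bar\chi_i^{d_i(\bm)}$. The phrase ``up to reorganizing which powers of $\bar\chi_i$ one factors out'' is exactly where the argument breaks: subtracting a series from a series does not produce a factorization, and to make the inductive version work you would need to already know the factored form for the other constituents $L(\bm')$ --- which is the statement being proved --- together with a well-founded ordering on constituents that you have not supplied. (You also invoke \eqref{1+A}, which is stated only for irreducibles in $\cO_\g$, for constituents that are merely $\cO_\bo$-modules.) The paper's route avoids all of this: Lemma~\ref{submodules} says every nonzero submodule of $V\otimes_D M$ contains $v_0\otimes\ket{\emptyset}_M$, hence the cyclic submodule $U_q\bo\cdot(v_0\otimes\ket{\emptyset}_M)$ has no proper nonzero submodule, i.e.\ it is already simple and equals $L(\bm)$; by the same lemma it has the form $V^{(0)}\otimes M$ with $V^{(0)}$ a sum of $\ell$-weight subspaces of the finite-dimensional module $V=L(\bm_1\bm_0)$. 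Then $\chi_q(L(\bm))=\bigl(\sum_\Psi\dim(V^{(0)}\cap V_\Psi)\,m(\Psi)\bigr)\cdot\chi_q(M)$, which is precisely $\bm(1+\sum_r\bm_r)\prod_i\bar\chi_i^{d_i(\bm)}$ with $\bm_r$ monomials in the $A^{-1}$ and nonnegative coefficients --- polynomiality and positivity come for free.

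For the last assertion you gave up at the right moment but for the wrong reason: no appeal to the $TQ$-relations (which would indeed be a forward reference) is needed. With $\bm=\bm_0\bm_p$ having no $y$-dependence, write $\bm_p=\prod_{j,a}X_{j,a}^{q_{j,a}}$ so that $d_j(\bm)=\sum_a q_{j,a}$; then $\bm_p\prod_i\bar\chi_i^{d_i(\bm)}=\prod_{j,a}(X_{j,a}\bar\chi_j)^{q_{j,a}}=\prod_{j,a}\chi_q(M^+_{j,a})^{q_{j,a}}$ by \eqref{qch-M+}. What remains is $\bm_0(1+\sum_r\bm_r)$, a Laurent polynomial in the $Y_{i,a}$ and $A_{i,a}^{-1}$. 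Each $Y_{i,a}=X_{i,aq_i^{-1}}/X_{i,aq_i}$ and each $A_{i,a}$ is a product of such ratios $X_{j,b}/X_{j,c}$, and in any such ratio the $\bar\chi_j$ cancels, giving $X_{j,b}/X_{j,c}=\chi_q(M^+_{j,b})/\chi_q(M^+_{j,c})$. So $\chi_q(V)$ is a Laurent polynomial in the $\chi_q(M^+_{i,a})$ directly. The observation you missed is simply that $Y$'s and $A$'s have total $d_j$-degree zero for every $j$, so the normalized characters drop out of the ratios.
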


As a byproduct we also recover a result of \cite{B}.

\begin{cor}
Let $W$ be a finite-dimensional $U_q\bo$ module. 
Then $W=L(y^{\bs b})\otimes V$ where $V$ is the 
restriction of a finite-dimensional $U_q\g$ module.
\end{cor}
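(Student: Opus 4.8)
The plan is to reduce at once to the case $W=L(\bm)$ simple, and then to read the statement off the classification already obtained; the simple case turns out to be essentially immediate, and the bookkeeping needed to pass from simple modules to an arbitrary finite-dimensional $W$ is where I expect any real difficulty to lie. First, a simple finite-dimensional $U_q\bo$-module is $\tb$-diagonalizable: any common $k_i$-eigenvector generates it, and the triangular decomposition \eqref{triandecomp} then shows that the whole module is spanned by weight vectors (note that the $h_{i,r}$ have zero weight, so they preserve $k$-eigenspaces). Hence $W\in\cO_\bo$ and $W=L(\bm)$ for some $\bm\in\mfr_\bo$.

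Now for simple $W=L(\bm)$: being finite dimensional it is in particular of finite type, so $L(\bm)\in\Fin$, and Theorem \ref{class thm} gives that $\bm$ is dominant. Writing $\bm=\bm_1\bm_0\bm_p$ as in the proof of Theorem \ref{class thm} --- with $\bm_1$ a monomial in the $y_i^b$, $\bm_0$ a monomial in the $Y_{i,a}$, and $\bm_p$ a monomial in the $X_{i,a}$ --- we have $d_i(\bm)=d_i(\bm_p)\ge 0$, with equality iff $\bm_p$ involves no letter $X_{i,\bullet}$. So the finite-dimensionality criterion of Corollary \ref{A cor}, namely $d_i(\bm)=0$ for all $i\in I$, forces $\bm_p=1$, whence $\bm=y^{\bs b}\,\bm_0$ with $y^{\bs b}:=\bm_1$ and $\bm_0$ a dominant monomial in the $Y_{i,a}$ alone. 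By Theorem \ref{Ch-Pr} the $U_q\g$-module $V:=L(\bm_0)$ is finite dimensional. Finally, $\mathrm{Res}\,V$ is a simple $U_q\bo$-module by Lemma \ref{lem:simple-to-simple}, and tensoring it with the one-dimensional module $L(y^{\bs b})$ again gives a simple $U_q\bo$-module; its highest $\ell$-weight monomial is $y^{\bs b}\bm_0=\bm$ by \eqref{h}, so $W=L(\bm)\cong L(y^{\bs b})\otimes\mathrm{Res}\,V$, as claimed.

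For a general finite-dimensional $W$ one applies this to the simple subquotients of a composition series. The step that requires attention --- and, in my view, the main obstacle --- is to control the one-dimensional twists attached to the various composition factors and to propagate the $U_q\g$-module structure up the series; I would organize this through the block decomposition of the category of finite-dimensional $U_q\bo$-modules together with a comparison of extensions over $U_q\bo$ with those over $U_q\g$. None of this enters the simple case, which is the statement actually used in the rest of the paper and which is the content of \cite{B}.
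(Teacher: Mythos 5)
Your treatment of the simple case matches the paper's proof exactly: finite dimensionality gives finite type, Theorem \ref{class thm} makes the monomial dominant, Corollary \ref{A cor} forces $d_i(\bm)=0$ so that the $X$-part $\bm_p$ is trivial, and the product decomposition then gives $L(\bm)\cong L(y^{\bs b})\otimes \mathrm{Res}\,L(\bm_0)$. Your preliminary observation that a simple finite-dimensional $U_q\bo$-module is automatically $\tb$-diagonalizable (hence lies in $\cO_\bo$, so that the machinery applies) is a genuinely useful detail which the paper's two-line proof elides.

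Your closing paragraph, however, identifies a gap that in fact cannot be closed, because the statement is false for non-simple $W$, and your proposed route through blocks and $\mathrm{Ext}$-comparison would only confirm this. For $\g=\widehat{\mathfrak{sl}}_2$, take $W=\C w_1\oplus\C w_2$ with $e_0=0$, $e_1 w_1=0$, $e_1 w_2=w_1$, $k_1 w_1=q_1^2 w_1$, $k_1 w_2=w_2$. The $k$--$e$ relations hold because $e_1$ shifts the weight by $q_1^2$, and the Serre relations are vacuous since $e_0=0$; one checks $W\in\cO_\bo$ and that $\C w_1$ is a submodule with no complement, so $W$ is a non-split extension of the trivial module by $L(y^2)$. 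Yet $W$ cannot be $L(y^{\bs b})\otimes V$ with $V$ a finite-dimensional $U_q\g$-module: the weight set would force $V$ to have weights $\{q_1^m,q_1^{m+2}\}$ for some $m$, but a non-simple two-dimensional $U_q\g$-module is assembled from one-dimensional constituents whose weights satisfy $k_i^2=1$, impossible here since $q_1^2\ne\pm1$, while a simple $V$ yields a simple tensor product. So the corollary must be read for simple $W$ (as the paper's proof, and the title of \cite{B}, indicate), and that is precisely the scope your argument correctly establishes.
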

\begin{proof}
If $W$ is finite-dimensional, then $W$ is finite type and Theorem \ref{class thm} applies. 
Moreover, by Corollary \ref{A cor}, 
we have $d_i=0$ for all $i\in I$.
\end{proof}
\subsection{$2$-finite modules}
In this section we discuss examples of non-trivial modules of finite type.

The $q$-characters of all $U_q\widehat{\mathfrak{sl}}_2$ finite type modules can be written immediately from Corollary \ref{sl2 cor}. Moreover, we have a notion of dominant monomials. Therefore the $q$-characters of many modules could be built recursively by the algorithm described in \cite{FM1}. 
However, we also have an alternative approach due to results of Section \ref{MV sub}, which we now use.

If $W$ is an irreducible module
of finite type, then by Theorem \ref{class thm} $W$ is 
(up to tensoring by a one dimensional module) 
a subquotient of  $ V\otimes M$, 
where $V$ is the restriction of a $U_q\g$ module 
and $M$ is a tensor product of positive fundamental modules. 

We start with the classification of $1$-finite modules.
\begin{prop}\label{all 1 finite}
Let $M\in\Ob\cO_{\bo}$ be an irreducible module. 
Then $M$ is  $1$-finite if and only if it has polynomial highest
$\ell$-weight.
\end{prop}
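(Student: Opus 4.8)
The plan is to prove both implications, the easy one first. If $M$ has polynomial highest $\ell$-weight, then $M$ is $1$-finite by Corollary \ref{cor:1-finite}, so only the converse requires work. Suppose $M=L(\bm)$ is $1$-finite. Since being $1$-finite means the currents $\{k_i^{-1}\phi^+_i(z)\}_{i\in I}$ have a single joint eigenvalue on $M$, namely the highest $\ell$-weight $\bar\Psi^M_i(z):=\Psi^M_{i,0}{}^{-1}\Psi^M_i(z)$, the first task is to show $\bm$ is a dominant monomial, i.e.\ that $L(\bm)$ is of finite type; then Theorem \ref{class thm} applies and we may use the structural description it provides.

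First I would argue that $\bm$ is dominant. Restricting to each $U_q\bo_{\{i\}}\cong U_q\bbo$-type rank-one subalgebra via $res_{\{i\}}$, the module $res_{\{i\}}(M)$ is still $1$-finite (a subquotient/restriction of a $1$-finite module is at most $1$-finite, and it is nonzero, hence exactly $1$-finite). So it suffices to handle $\g=\widehat{\sln}_2$: I must show that if $L(\bm)$ with $\bm=y^b\prod_i X_{a_i}\prod_j X_{b_j}^{-1}$ is $1$-finite, then there are no $X_{b_j}^{-1}$ factors, i.e.\ $j_0=0$. By the analysis in the proof of Theorem \ref{class thm}, if $j_0\ge 1$ then $L(\bm)$ is a subquotient of a tensor product of (at least one) negative fundamental module $L(X_{b}^{-1})=M^-_{1,b}$, a restriction of a $U_q\g$ module, and a one-dimensional module. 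Since $M^-_{1,b}$ is \emph{not} finite type (by formula \eqref{M-char} and the remarks after it, its $q$-character involves infinitely many distinct $\ell$-weights obtained by dividing by monomials in the $A^{-1}_{j,a}$), and a tensor product is finite type iff every factor is, such a tensor product is not finite type; hence $L(\bm)$, being a subquotient, cannot be $1$-finite either — contradiction. (More carefully: by Corollary \ref{inf dim cor}, $L(\bm)$ is infinite-dimensional, and one checks that the $\ell$-weight multiplicities coming from the $M^-$-factor force infinitely many joint eigenvalues of the Cartan currents.) Therefore $j_0=0$, so $\bm=y^b\prod_i X_{a_i}$, which is dominant.

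Once $\bm$ is dominant, Theorem \ref{class thm} (or Corollary \ref{sl2 cor} applied coordinate-wise, then the general case) gives that in general $\bm=\bm_1\bm_0\bm_p$ with $\bm_1$ a monomial in the $y^b$, $\bm_0$ a monomial in the $Y_{i,a}$, $\bm_p$ a monomial in the $X_{i,a}$, and $L(\bm)=L(\bm_1)\otimes L(\bm_0)\otimes L(\bm_p)$. Since $L(\bm_1)$ is one-dimensional ($1$-finite) and $L(\bm_p)$ has polynomial highest $\ell$-weight (hence $1$-finite by Corollary \ref{cor:1-finite}), and a tensor product is $s_1s_2$-finite at most, $L(\bm)$ is $1$-finite iff $L(\bm_0)$ is, i.e.\ iff the finite-dimensional module $L(\bm_0)$ is $1$-dimensional. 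By Theorem \ref{Ch-Pr} a finite-dimensional $U_q\g$ module $L(\bm_0)$ with $\bm_0$ a monomial purely in the $Y_{i,a}$ has $q$-character $\bm_0(1+\sum_j\bm_j)$; it is $1$-dimensional iff there are no $A^{-1}$ correction terms, which by \eqref{top terms} forces $\bm_0=1$ (any nontrivial product of $Y_{i,a}$'s produces a nonzero $A^{-1}_{i,aq_i}$ term). Hence $\bm_0=1$ and $\bm=\bm_1\bm_p$ has polynomial highest $\ell$-weight, as claimed.

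The main obstacle is the step showing that a negative fundamental factor actually destroys $1$-finiteness, i.e.\ the careful bookkeeping that the joint spectrum of $\{k_i^{-1}\phi^+_i(z)\}$ on $L(\bm)$ is infinite when $j_0\ge 1$. One has to pass to the dual module $L(\bm)^*=L^\vee(\bm^{-1})$ and invoke the dual of the submodule analysis (Lemma \ref{submodules}, Corollary \ref{inf dim cor}) together with the explicit shape \eqref{1+A} of $q$-characters of $U_q\g$ modules and \eqref{M-char}, to exhibit infinitely many $\ell$-weights differing by products of $A^{-1}_{j,a}$'s; each such $\ell$-weight has a distinct value of $k_i^{-1}\phi^+_i(z)$ because $\varpi$ separates them. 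This is exactly the argument structure of the proof of Theorem \ref{class thm} in the $i_0<j_0$ and $i_0=j_0$ cases, reused here, so it should go through without genuinely new ideas — but it is the place where care is needed.
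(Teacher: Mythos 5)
Your overall outline — easy direction via Corollary \ref{cor:1-finite}, then factor $\bm=\bm_1\bm_0\bm_p$ and force $\bm_0=1$ — matches the paper's strategy, but the converse direction has a real gap that the paper's proof handles carefully and yours does not.

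First, a minor inefficiency: your entire first paragraph re-derives that $\bm$ is dominant, but this is immediate. Being $1$-finite is the $s=1$ case of the definition of finite type, so Theorem \ref{class thm} already says $\bm$ is dominant; no restriction-to-rank-one argument is needed. (Your phrasing ``the tensor product is not finite type, hence a subquotient cannot be $1$-finite'' is also logically backwards as stated — subquotients of non-finite-type modules can be finite type — though the parenthetical remark points at the correct fix, which is exactly the paper's argument in Theorem \ref{class thm}.)

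The genuine gap is in the second paragraph. You assert that in general $L(\bm)=L(\bm_1)\otimes L(\bm_0)\otimes L(\bm_p)$, citing Theorem \ref{class thm} or Corollary \ref{sl2 cor}. But this tensor product decomposition is only established in the paper for $\widehat{\mathfrak{sl}}_2$ (Corollary \ref{sl2 cor}); for general $\g$ the proof of Theorem \ref{class thm} only shows $L(\bm)$ is a \emph{subquotient} of that tensor product. Moreover, even granting the decomposition, the factorization $\bm=\bm_1\bm_0\bm_p$ is not unique: since $Y_{i,a}=X_{i,aq_i^{-1}}X_{i,aq_i}^{-1}$, the monomial $X_{i,a}$ can also be written as $Y_{i,aq_i}\cdot X_{i,aq_i^2}$, so your criterion ``$\bm_0=1$'' is not well-defined. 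With the wrong factorization choice your argument would declare $M^+_{i,a}$ not $1$-finite, which is false. The paper's proof resolves exactly this by fixing a normalization — replacing $X_{i,aq_i}Y_{i,a}$ by $X_{i,aq_i^{-1}}$ until no $Y_{i,a}\mid\bm_0$ with $X_{i,aq_i}\mid\bm_p$ remains — and then arguing directly: by Lemma \ref{submodules}, $M$ is a submodule of $L(\bm_0)\otimes_D L(\bm_p)$ containing $v_0\otimes\ket{\emptyset}$, and one computes $\Delta_D(x^-_{i,>}(z))$ on that vector to produce (using the normalization and Lemma \ref{delta lem}) a nonzero component in a distinct $\ell$-weight, contradicting $1$-finiteness unless $\dim L(\bm_0)=1$. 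Your argument is missing both the normalization and this Drinfeld-coproduct step, which is the crux of the proof.
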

\begin{proof}
It is enough to show the `only if' part.

By Theorem \ref{class thm}, there exists a dominant monomial $\bm\in\mc X$ such that $M=L(\bm)$. 
Tensoring $M$ with an 
appropriate one dimensional module, 
we may assume that
$\bm=\bm_0\bm_p$, where $\bm_0$ is a 
monomial in the $Y_{i,a}$'s and $\bm_p$ is a monomial in the  $X_{i,a}$'s. 
Replacing $X_{i,aq_i}Y_{i,a}$ with $X_{i,aq_i^{-1}}$ as necessary,  
we may assume further that if $\bm_0$ is divisible by $Y_{i,a}$ then $\bm_p$ is not divisible by $X_{i,aq_i}$. 
Under this condition we claim that $\bm_0=1$.

Indeed, let $V=L(\bm_0)$, $M'=L(\bm_p)$. 
By Lemma \ref{submodules}, any non-zero submodule of $V\otimes_D M'$ contains
$v_0\otimes\ket{\emptyset}_{M'}$ 
where $v_0$ is the highest $\ell$-weight vector of $V$.  
Therefore $M$  
is a submodule of $V\otimes_D M'$ 
containing $v_0\otimes\ket{\emptyset}_{M'}$.  
If $\dim V>1$, then 
there exist $i\in I$ and $a\in \C^\times$ 
such that $Y_{i,a}$ divides $\bm_0$. We have
\begin{align*} 
\Delta_D\bigl(x_{i,>}^-(z)\bigr)(v_0\otimes \ket{\emptyset}_{M'})=
\Bigl(x_{i}^-(z) v_0\otimes
\phi^+_i(z)\ket{\emptyset}_{M'}
\Bigr)_{>}+v_0\otimes x_{i,>}^-(z)\ket{\emptyset}_{M'}\,.
\end{align*}
Since $X_{i,aq_i}$ does not divide $\bm_p$,  
Lemma \ref{delta lem}  implies that 
the first term in the right hand side is a non-zero vector. 
Since it is a sum of 
terms of $\ell$-weight different from that of  
$v_0\otimes\ket{\emptyset}_{M'}$,   
this contradicts to the assumption that $M$ is $1$-finite. 
Hence we have $\dim V=1$. 
\end{proof}

Next we describe $2$-finite modules.
For $i\in I$ and $a\in \C^\times$, let
\begin{align}\label{N module}
N_{i,a}^+=L(\bm_{i,a}^{(2)}), 
\qquad \bm_{i,a}^{(2)}=
X_{i,a}^{-1}\prod_{j\in I, C_{j,i}\neq 0} X_{j,aq_{j,i}^{-1}}\,.
\end{align}
The properties of the module $N^+_{i,a}$ are given in the following proposition.
\begin{prop}\label{prop:2-finite}
The module $N^+_{i,a}$ is $2$-finite. Moreover we have
\begin{align}
\chi_q(N^+_{i,a})=\bm_{i,a}^{(2)}(1+A^{-1}_{i,a})
\prod_{j: C_{j,i}<0} \bar\chi_j.
\label{qch-N}
\end{align}
\end{prop}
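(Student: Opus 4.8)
The plan is to realize $N^+_{i,a}$ explicitly as a subquotient of a tensor product $V\otimes_D M'$ to which the structural results of Section \ref{MV sub} apply, and then to compute its $q$-character from that description. Concretely, take $V=L(Y_{i,aq_i^{-1}})=L(Y_{i,aq_i^{-1}})$, the restriction of the $i$-th $U_q\g$ fundamental module, and $M'=M^+_{i,a}=L(X_{i,a})$... wait, the highest monomial must be $\bm_{i,a}^{(2)}=X_{i,a}^{-1}\prod_{j:C_{j,i}\ne0}X_{j,aq_{j,i}^{-1}}$, which is \emph{not} dominant in the variable $i$. So instead one writes $\bm_{i,a}^{(2)}$ as a product of a dominant piece and uses duality: pass to the dual $L^\vee$ in $\cO^\vee_\bo$ exactly as in the proof of Theorem \ref{class thm}. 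Alternatively, and more in the spirit of Section \ref{MV sub}, observe that $\bm^{(2)}_{i,a}=\bigl(\prod_{j:C_{j,i}\ne0}X_{j,aq_{j,i}^{-1}}\bigr)\cdot X_{i,a}^{-1}$, and that $X_{i,a}^{-1}$ is the highest monomial of the negative fundamental module $M^-_{i,a}$; but $M^-_{i,a}$ is not finite type, so the genuinely finite-type route is via the dual, writing $N^{+}_{i,a}=\bigl(L^\vee((\bm^{(2)}_{i,a})^{-1})\bigr)^*$ with $(\bm^{(2)}_{i,a})^{-1}=X_{i,a}\prod_j X_{j,aq_{j,i}^{-1}}^{-1}$ having a dominant-type head $X_{i,a}$.

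First I would set $\bm_p=X_{i,a}$, so $L(\bm_p)=M^+_{i,a}$, and let $\bm_0$ be the highest $\ell$-weight of the dual lowest-weight module whose tensor with $M^+_{i,a}$ has $N^+_{i,a}$ as a subquotient; by Corollary \ref{inf dim cor} and Lemma \ref{submodules} the relevant ambient module is $V\otimes_D M$ with $M$ a tensor product of positive fundamentals and $V$ a restriction of a $U_q\g$ module. Once $N^+_{i,a}$ is identified as the irreducible submodule $V^{(0)}\otimes M$ (using Lemma \ref{submodules}, so that no proper submodule can carry a strictly lower $\ell$-weight in the $X_{i,a}$-variables), the $q$-character follows from $\chi_q(V^{(0)}\otimes M)$ combined with formula \eqref{qch-M+} for $\chi_q(M^+_{j,b})=X_{j,b}\bar\chi_j$ and the top-term expansion \eqref{top terms} for the fundamental $V=L(Y_{i,aq_i^{-1}})$. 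The point is that, after the Drinfeld-coproduct twist of Lemma \ref{lem:Drinfeld-copro}, only the very top monomials of $\chi_q(V)$ survive once one intersects with the $1$-finite factor, and those are precisely $Y_{i,b}\bigl(1+A^{-1}_{i,bq_i}+\dots\bigr)$, which after shifting combine with $\bar\chi_j$'s to give $\bm^{(2)}_{i,a}(1+A^{-1}_{i,a})\prod_{j:C_{j,i}<0}\bar\chi_j$. The $2$-finiteness is then immediate: the factor $(1+A^{-1}_{i,a})$ contributes exactly two distinct values of $k_i^{-1}\phi^+_i(z)$ — or rather, one must check that within each $\bar\chi_j$-block the currents $k_i^{-1}\phi_i^+(z)$ are constant, which is true because $\bar\chi_j$ for $j\ne i$ involves only $A^{-1}_{k,b}$ with contributions to the $i$-current cancelling, and the only genuine bifurcation is the single $A^{-1}_{i,a}$.

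For the actual argument I would use Lemma \ref{irred lemma} to pin down exactly when $V\otimes M$ stays irreducible: here the relevant monomials $A^{-1}_{i',a'}$ appearing in $\chi_q(V)$ for $V=L(Y_{i,aq_i^{-1}})$ are, to top order, $A^{-1}_{i,a}$ and $A^{-1}_{i,a}A^{-1}_{j,aq_iq_{j,i}^{-1}}$ (from \eqref{top terms}); so the tensor product with $M=\bigotimes_{j:C_{j,i}\ne0} M^+_{j,aq_{j,i}^{-1}}$ is \emph{not} generic — the pairs $(j,aq_{j,i}^{-1})$ collide — and this controlled non-genericity is exactly what produces the single extra $\ell$-weight. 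I would then apply Lemma \ref{submodules} and Lemma \ref{other generators lem}: the submodule generated by the appropriate singular vector is of the form $V_0\otimes_D M$ with $V_0$ the span of $v_0$ together with the images $v_{0,a}$ under $x^-_i(z)$ at the resonance point $a$; using Lemma \ref{delta lem} one sees $V_0$ is $2$-dimensional (spanned by the $\ell$-weight-$\bm_0$ and $\ell$-weight-$\bm_0 A^{-1}_{i,a}$ vectors of $V$), and the rest of $\chi_q(V)$ lies in the $\ell$-weighted complement, satisfying the hypotheses of Lemma \ref{other generators lem}. Then $N^+_{i,a}\cong V_0\otimes_D M$ up to a one-dimensional twist, and the $q$-character formula \eqref{qch-N} drops out by multiplying $\chi_q(V_0)=\bm_0(1+A^{-1}_{i,a})$ (suitably normalized) by $\prod_{j:C_{j,i}<0}\chi_q(M^+_{j,aq_{j,i}^{-1}})/X_{j,aq_{j,i}^{-1}}=\prod_{j:C_{j,i}<0}\bar\chi_j$ and accounting for the cases $C_{j,i}=-1$ versus $C_{j,i}<-1$ where $q_{j,i}\ne q_j^{-C_{j,i}}$ changes nothing since $j$ with $C_{j,i}\ne0$ but $C_{j,i}$ not negative do not occur.

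The main obstacle I expect is the bookkeeping of the resonance in Lemma \ref{irred lemma}/Lemma \ref{delta lem}: one must verify precisely that the ``defect'' of $V\otimes M$ from irreducibility is exactly one extra $\ell$-weight (so that $V_0$ is genuinely $2$-dimensional, not larger), which requires knowing enough top terms of $\chi_q(L(Y_{i,aq_i^{-1}}))$ — in particular that the second-layer monomial $A^{-1}_{i,a}A^{-1}_{j,aq_iq_{j,i}^{-1}}$ pairs with $X_{j,aq_{j,i}^{-1}}$ in $M$ in a way that does \emph{not} create a further collision, so that the module $V_0\otimes_D M$ is irreducible by Lemma \ref{other generators lem}. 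Equivalently, one needs that $V_0$, with the $x^+$-action and the $x^-_{i,k}$ ($k>0$)-action, is already closed, and that the remaining vectors of $V$ map into $V_0\otimes_D M$ modulo $V_0$ only through $\ell$-weights disjoint from those of $V_0$ — this is the content of the ``no common $\ell$-weights'' hypothesis and is where the argument could get delicate if the Cartan matrix entry $C_{j,i}$ is $-2$ or $-3$, since then $\bar\chi_j$ has a more complicated shape; I would handle those by the explicit $A_{i,a}$-expansion displayed just after the definition of $A_{i,a}$, checking cancellation of the $i$-current block by block.
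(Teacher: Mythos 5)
Your proposal follows essentially the same route as the paper: realize $N^+_{i,a}$ inside $V\otimes_D M$ with $V=L(Y_{i,aq_i^{-1}})$ a fundamental module and $M$ a tensor product of positive fundamentals, identify the two-dimensional subspace $V^{(0)}$ of $V$ spanned by the $\ell$-weight vectors for $Y_{i,aq_i^{-1}}$ and $Y_{i,aq_i^{-1}}A^{-1}_{i,a}$ via \eqref{top terms}, show $V^{(0)}\otimes_D M$ is a submodule via Lemma \ref{delta lem}, Lemma \ref{polyn act}, and Lemma \ref{other generators lem}, and then use Lemma \ref{submodules} for irreducibility. Two small corrections: the index set for $M$ should be $\{j:C_{j,i}<0\}$, i.e.\ $j\ne i$, from the start (your initial $\{j:C_{j,i}\ne0\}$ would wrongly include a factor $M^+_{i,aq_i^{-2}}$, which you implicitly fix later); and the opening detour through $\cO^\vee_\bo$ and duality is a dead end you yourself abandon — the direct Drinfeld-coproduct realization is the effective argument.
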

\begin{proof}
Noting that
$\ds{\bm_{i,a}^{(2)}
=Y_{i,aq_i^{-1}} \prod_{j\in I, C_{j,i}<0}X_{j,aq_{j,i}^{-1}}}$,
we set 
\begin{align*}
V=L(Y_{i,aq_i^{-1}})\,,\quad
M=\bigotimes\limits_{j: C_{j,i}<0}M_{j,aq_{j,i}^{-1}}^+\,.
\end{align*}
We use an argument similar to that of the proof of 
Proposition \ref{prop:gradingM}. Let $\bm=Y_{i,aq_i^{-1}}$.
By \eqref{top terms},
the $\ell$-weight subspaces $V_{\bm}$, $V_{\bm A^{-1}_{i,a}}$ 
are both one-dimensional. 
Choose their generators $v_0,v_1$ 
and consider the two dimensional subspace $V^{(0)}=\C v_0+\C v_1$ of $V$. 
We claim that $V^{(0)}\otimes M$ is a submodule of $V\otimes_D M$. 

Indeed, we have $\dim L(\bm)_{\bm A^{-1}_{i,a}}=1$, and 
if $C_{i,j}<0$, then $\dim L(\bm )_{\bm_j}=1$ where  
$\bm_j=\bm A^{-1}_{i,a}A^{-1}_{j,aq_{j,i}^{-1}}$.  
Therefore, by Lemma \ref{delta lem} and   Lemma \ref{polyn act}, we see that 
 $\Delta_D\bigl(x^-_{j,>}(z)\bigr)(v_1\otimes M)\subset V^{(0)}\otimes M$.  
It is also clear that $V^{(0)}\otimes M$ is stable under
 $\Delta_D\bigl(x^+_{j,\gge}(z)\bigr)$. 
Therefore $V^{(0)}\otimes M$ is a submodule by Lemma \ref{other generators lem}.

By Lemma \ref{submodules}, the only possibility for a non-zero proper
submodule of $V^{(0)}\otimes M$ 
is $\C v_0\otimes M$. However, it is easy to see that the latter is not a submodule.
Hence $V^{(0)}\otimes M$ is irreducible. 
Comparing the highest $\ell$-weights, we conclude that 
$N^+_{i,a}=V^{(0)}\otimes M$, from which follows \eqref{qch-N}. 
\end{proof}
The module $N_{i,a}^+$ should be thought of as an analog of $U_q\widehat{\mathfrak{sl}}_2$ module in the direction $i$. 
We will use  modules $N^+_{i,a}$ in Section \ref{transfer sec} to establish Bethe ansatz equations for $XXZ$ 
Hamiltonians in the same way as it is done in the case of $U_q\widehat{\mathfrak{sl}}_2$. For that we will need the following lemma.

\begin{prop}\label{TQ relation} We have the equality in 
the Grothendieck ring $\Rep_F\, U_q\bo$.
\begin{align*}
[N_{i,a}^+][M^+_{i,a}]=
\prod\limits_{j,\ C_{j,i}\neq 0} [M^+_{j,aq_{j,i}^{-1}}]
+\prod\limits_{j,\ C_{j,i}\neq 0} [M^+_{j,aq_{j,i}}].
\end{align*}
\end{prop}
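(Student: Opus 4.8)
The plan is to verify the identity in the Grothendieck ring by checking it after applying the injective $q$-character homomorphism $\chi_q$ of Proposition \ref{inj}. Since $\chi_q$ is a ring homomorphism and is injective, it suffices to prove
\begin{align*}
\chi_q(N_{i,a}^+)\,\chi_q(M^+_{i,a})
=\prod_{j:\,C_{j,i}\neq 0}\chi_q(M^+_{j,aq_{j,i}^{-1}})
+\prod_{j:\,C_{j,i}\neq 0}\chi_q(M^+_{j,aq_{j,i}}),
\end{align*}
which is now a concrete computation with explicit formal series in the ring $\mc X$. We have the closed formulas $\chi_q(M^+_{j,b})=X_{j,b}\bar\chi_j$ from \eqref{qch-M+} and $\chi_q(N^+_{i,a})=\bm_{i,a}^{(2)}(1+A^{-1}_{i,a})\prod_{j:\,C_{j,i}<0}\bar\chi_j$ from \eqref{qch-N}, where $\bm_{i,a}^{(2)}=X_{i,a}^{-1}\prod_{j:\,C_{j,i}\neq0}X_{j,aq_{j,i}^{-1}}$.

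First I would compute the left-hand side: the $X_{i,a}^{-1}$ in $\bm_{i,a}^{(2)}$ cancels against the $X_{i,a}$ in $\chi_q(M^+_{i,a})$, leaving
\begin{align*}
\chi_q(N_{i,a}^+)\,\chi_q(M^+_{i,a})
=\Bigl(\prod_{j:\,C_{j,i}\neq0}X_{j,aq_{j,i}^{-1}}\Bigr)(1+A^{-1}_{i,a})\,\bar\chi_i\prod_{j:\,C_{j,i}<0}\bar\chi_j.
\end{align*}
For the right-hand side I would use that $C_{i,i}=2$ so that $aq_{i,i}^{-1}=aq_i^{-2}$ and $aq_{i,i}=aq_i^{2}$, and for $j\neq i$ with $C_{j,i}\neq 0$ (equivalently $C_{j,i}<0$) the factor $X_{j,aq_{j,i}^{-1}}$ appears in the first product and $X_{j,aq_{j,i}}$ in the second. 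Thus the right-hand side equals
\begin{align*}
X_{i,aq_i^{-2}}\bar\chi_i\prod_{j:\,C_{j,i}<0}X_{j,aq_{j,i}^{-1}}\bar\chi_j
+X_{i,aq_i^{2}}\bar\chi_i\prod_{j:\,C_{j,i}<0}X_{j,aq_{j,i}}\bar\chi_j.
\end{align*}
So after dividing both sides by $\bar\chi_i\prod_{j:\,C_{j,i}<0}\bar\chi_j$ (legitimate since these are invertible formal series in $\mc X$) and by $\prod_{j:\,C_{j,i}<0}X_{j,aq_{j,i}^{-1}}$, the identity reduces to the single-variable statement
\begin{align*}
X_{i,aq_i^{-2}}X_{i,aq_i^{2}}\,\frac{1+A^{-1}_{i,a}}{X_{i,aq_i^{-2}}}
=X_{i,aq_i^{-2}}+X_{i,aq_i^{2}}\prod_{j:\,C_{j,i}<0}\frac{X_{j,aq_{j,i}}}{X_{j,aq_{j,i}^{-1}}},
\end{align*}
i.e. $X_{i,aq_i^{2}}(1+A^{-1}_{i,a})=X_{i,aq_i^{-2}}+X_{i,aq_i^{2}}\prod_{j}\frac{X_{j,aq_{j,i}}}{X_{j,aq_{j,i}^{-1}}}$, which is exactly the defining relation $A_{i,a}=\prod_{j\in I}X_{j,aq_{j,i}^{-1}}/X_{j,aq_{j,i}}$ rearranged: $A^{-1}_{i,a}=\prod_{j}X_{j,aq_{j,i}}/X_{j,aq_{j,i}^{-1}}$, and isolating the $j=i$ term gives $A^{-1}_{i,a}=\dfrac{X_{i,aq_i^{2}}}{X_{i,aq_i^{-2}}}\prod_{j\neq i}\dfrac{X_{j,aq_{j,i}}}{X_{j,aq_{j,i}^{-1}}}$. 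Substituting this back closes the computation.

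The main obstacle, as often with $q$-character arguments, is bookkeeping: one must be careful that the "extra" $\bar\chi$ factors match up correctly — specifically that $\chi_q(N^+_{i,a})$ carries $\bar\chi_i\prod_{j:C_{j,i}<0}\bar\chi_j$ while the product $\prod_{j:C_{j,i}\neq0}\chi_q(M^+_{j,\bullet})$ carries $\bar\chi_i\prod_{j:C_{j,i}<0}\bar\chi_j$ on \emph{each} of the two terms on the right, so they cancel uniformly. One subtlety worth double-checking is the treatment of type $A_1$ subdiagrams where $q_{j,i}$ could in principle produce coincidences, and the case of non-simply-laced $\g$ where $q_{j,i}=q^{(\alpha_i,\alpha_j)}$ need not equal $q_i$ or $q_j$; but since all manipulations take place in the polynomial ring $\mc X=\Z[[A_{i,a}^{-1}]][X_{i,c}^{\pm1},y_i^b]$ where the $X_{i,c}$ are independent variables, no such coincidence causes trouble. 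Alternatively, one could avoid $q$-characters entirely and argue directly: the proof of Proposition \ref{prop:2-finite} exhibits $N^+_{i,a}$ as $V^{(0)}\otimes M$ with $V=L(Y_{i,aq_i^{-1}})$ and $M=\bigotimes_{j:C_{j,i}<0}M^+_{j,aq_{j,i}^{-1}}$, and $V$ itself is a two-term module in the Grothendieck ring (the restriction of the fundamental $U_q\slt$-type module), so tensoring through and using Lemma \ref{product of M} to identify the resulting irreducible tensor products would give the same relation; but the $q$-character route is cleaner and is the one I would write up.
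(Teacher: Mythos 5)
Your proof is correct and is exactly the paper's approach: the paper's proof reads ``The lemma follows from the comparison of $q$-characters,'' and your writeup simply carries out that comparison explicitly using \eqref{qch-M+}, \eqref{qch-N}, and the defining formula for $A_{i,a}$. Two small points worth tidying: the display $X_{i,aq_i^{-2}}X_{i,aq_i^{2}}(1+A^{-1}_{i,a})/X_{i,aq_i^{-2}}$ should simply read $X_{i,aq_i^{-2}}(1+A^{-1}_{i,a})$ after the stated divisions (your subsequent algebra is nonetheless consistent with the correct identity), and you should note explicitly that for $j$ with $C_{j,i}=0$ one has $q_{j,i}=q^{(\alpha_j,\alpha_i)}=1$, so those factors drop out of $A^{-1}_{i,a}=\prod_{j\in I}X_{j,aq_{j,i}}/X_{j,aq_{j,i}^{-1}}$ and the product may be restricted to $j$ with $C_{j,i}\neq 0$.
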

\begin{proof}
The lemma follows from the comparison of $q$-characters.
\end{proof}

More generally, we have the following construction.
Let $V\in \Fin$ be an irreducible $U_q\bo$ module of finite type. 
Then by Corollary \ref{A cor}, 
$\chi_q(V)=\bm(1+\sum_{r=1}^N \bm_r)\prod_{i\in I}\bar\chi_i^{d_i(\bm)}$. 
We call 
$\chi_q^{ess}=1+\sum_{r=1}^N \bm_r\in\Z_{\geq 0}[A^{-1}_{i,a}]_{i\in I,a\in\C^\times}$
the {\it normalized essential $q$-character} of $V$.

Let $J\subset I$. We have the obvious inclusion of rings
$\iota_J:\ \Z[A^{-1}_{i,a}]_{i\in J,a\in\C^\times}\to \Z[A^{-1}_{i,a}]_{i\in I,a\in\C^\times}$. 

\begin{prop}\label{lift prop}
Let $W_J$ be a finite type  $U_q\bo_J$ module. Then there exists a finite type $U_q\bo$ module $W$ such that 
$\iota_J(\chi_q^{ess}(W_J))=\chi_q^{ess}(W)$.
\end{prop}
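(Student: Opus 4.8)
The plan is to reduce the general statement to the case $J = \{i\}$ a single node, and to produce the lift using the modules $N^+_{i,a}$ together with the finite-dimensional fundamental modules, exploiting the $q$-character calculus developed in Section \ref{q-char sec}. First I would recall that by definition the normalized essential $q$-character $\chi_q^{ess}$ depends only on the multiplicities of the monomials $\bm_r \in \Z_{\ge 0}[A^{-1}_{j,a}]_{j\in J}$, i.e. it is obtained from $\chi_q(W_J)$ by dividing out the leading monomial and the factors $\bar\chi_j^{d_j}$, so $\chi_q^{ess}$ is multiplicative under tensor products: $\chi_q^{ess}(W_J \otimes W_J') = \chi_q^{ess}(W_J)\chi_q^{ess}(W_J')$ (use Proposition \ref{inj} and the product formula in Corollary \ref{A cor}). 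Hence it suffices to lift the essential $q$-characters of the \emph{prime} finite-type $U_q\bo_J$ modules, and combine the lifts by taking the tensor product over $I$ (with the leading-monomial and $\bar\chi$ factors discarded, or rather absorbed by a suitable one-dimensional twist, since $\iota_J$ kills nothing relevant). The inclusion $\iota_J$ of the polynomial rings in the $A^{-1}_{i,a}$ makes this compatible because $\iota_J$ is a ring homomorphism.

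Next I would analyze what the prime finite-type $U_q\bo_J$ modules look like. If $|J|=1$, say $J=\{i\}$ (so $\bo_J$ is an $\widehat{\mathfrak{sl}}_2$-Borel up to rescaling of $q$), then by Corollary \ref{sl2 cor} the prime objects are exactly $L(X_{i,a})$ (with $\chi_q^{ess}=1$, nothing to lift) and the finite-dimensional evaluation modules $L(\prod_{t=0}^{s-1}Y_{i,aq_i^{-2t}})$ with essential character $1 + \sum_{r=1}^{s-1}\prod_{t=0}^{r-1}A^{-1}_{i,aq_i^{-2t}}$. For the latter I would try to realize the same essential character inside a $U_q\bo$ module by choosing an $i$-string of the analogous length built out of the modules $N^+_{i,a}$: concretely, from Proposition \ref{prop:2-finite} we have $\chi_q^{ess}(N^+_{i,a}) = 1 + A^{-1}_{i,a}$, and more generally one expects that a suitable subquotient of $N^+_{i,a}\otimes N^+_{i,aq_i^{-2}}\otimes\cdots$ has normalized essential $q$-character $1 + A^{-1}_{i,a} + A^{-1}_{i,a}A^{-1}_{i,aq_i^{-2}} + \cdots$. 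To make this rigorous I would pass to the appropriate dominant monomial $\bm$, invoke Theorem \ref{class thm} to get the irreducible $U_q\bo$ module $L(\bm)$, and compute $\chi_q^{ess}(L(\bm))$ either by the Frenkel--Mukhin algorithm of \cite{FM1} (which applies since $\bm$ is dominant) or by the submodule analysis of Section \ref{MV sub}, checking that the $j$-contributions with $j\ne i$ all lie in the $\bar\chi_j$ factors and therefore drop out of $\chi_q^{ess}$ after applying what amounts to $res_{\{i\}}$ at the level of essential characters. For the general $J$ case one tensors such $i$-string modules over $i \in J$; multiplicativity of $\chi_q^{ess}$ and the fact that the $A^{-1}_{i,a}$ are algebraically independent (noted after the definition of $A_{i,a}$) guarantees that the product of the individual essential characters is hit.

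The main obstacle I expect is \textbf{controlling the cross-terms}: when one lifts a $U_q\bo_J$ module to $U_q\bo$, the larger $q$-character can acquire monomials involving $A^{-1}_{j,a}$ for $j\notin J$, and one must verify that these are \emph{exactly} accounted for by the $\prod_j \bar\chi_j^{d_j}$ factors so that $\chi_q^{ess}(W)$, which by Corollary \ref{A cor} is a genuine polynomial in the $A^{-1}_{i,a}$ with $i\in I$, restricts correctly under $\iota_J$ — i.e. that $\chi_q^{ess}(W)$ actually lies in $\iota_J\bigl(\Z[A^{-1}_{i,a}]_{i\in J}\bigr)$ and equals the target. For the building blocks $N^+_{i,a}$ this is precisely the content of \eqref{qch-N}: the $\bar\chi_j$'s absorb all $j\ne i$ dependence, leaving $\chi_q^{ess}(N^+_{i,a}) = 1 + A^{-1}_{i,a}$. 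So the real work is to show this persists for the $i$-string tensor products (or their relevant subquotients): one needs that no ``new'' essential monomial mixing different nodes survives, which should follow from the grading of Proposition \ref{prop:gradingM} applied to the positive-fundamental tensor factors hidden inside $N^+_{i,a}$, together with Lemma \ref{submodules} to pin down the submodule structure of the relevant $V\otimes_D M$. Once the single-node lifts are established with clean essential characters, assembling the general case is routine by the multiplicativity argument above.
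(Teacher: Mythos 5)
The paper's proof takes a quite different route: it does \emph{not} try to factor $W_J$ into primes and lift piece by piece. Instead, writing $W_J = L(\bm_J)$, it takes the \emph{single} $U_q\bo$ module $V = L(\bm_J)$ (same monomial, now viewed in $\mc X$), observes that $(U_q\bo_J)v_0 \simeq W_J$, forms $V\otimes_D M$ with $M$ a product of $M^+_{i,a}$, $i\notin J$, chosen large enough that $\bigl(\Psi^M_i(z)x^-_i(z)v\bigr)_{>}=0$ on the relevant boundary $\ell$-weight vectors $v$, and takes $W$ to be the irreducible submodule of $V\otimes_D M$ containing $v_0\otimes\ket{\emptyset}_M$. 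The identity $\iota_J(\chi_q^{ess}(W_J))=\chi_q^{ess}(W)$ then comes from Lemma 3.6 of [FM1].

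Your reduction to $|J|=1$ is the genuine gap. You appeal to multiplicativity of $\chi_q^{ess}$ to reduce to primes and then to Corollary \ref{sl2 cor} to classify them; but Corollary \ref{sl2 cor} is proved only for $\g=\widehat{\mathfrak{sl}}_2$, and for general $J$ the finite-type $U_q\bo_J$ modules do \emph{not} factor into tensor products of single-node ($\widehat{\mathfrak{sl}}_2$) pieces. For instance if $\g_J \simeq \widehat{\mathfrak{sl}}_3$ and $W_J$ is a higher evaluation module, its essential $q$-character contains monomials mixing $A^{-1}_{1,a}$ and $A^{-1}_{2,b}$ that do not split as a product of an $A^{-1}_1$-polynomial and an $A^{-1}_2$-polynomial; so your plan to ``tensor such $i$-string modules over $i\in J$'' cannot hit the target essential character. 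A second, lesser issue: $\chi_q^{ess}$ is defined (Corollary \ref{A cor}) for \emph{irreducible} finite-type modules, so multiplicativity $\chi_q^{ess}(V_1\otimes V_2)=\chi_q^{ess}(V_1)\chi_q^{ess}(V_2)$ holds only when the tensor product is again irreducible with leading monomial $\bm_1\bm_2$; you do not check this for the tensor products you propose.

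Finally, the step you flag as the main obstacle --- showing the $j\notin J$ contributions are exactly absorbed into the $\bar\chi_j$ factors --- is indeed the crux, but you leave it as something that ``should follow'' from Proposition \ref{prop:gradingM} and Lemma \ref{submodules}. The paper resolves precisely this by a careful choice of $M$ (enough $M^+_{i,a}$ factors for $i\notin J$ to kill the half-currents $x^-_{i,>}$ on the boundary $\ell$-weight spaces of $U_q\bo_J\cdot v_0$) and by reducing the $q$-character identification to Lemma 3.6 of [FM1]. If you want to pursue your route, you would need (a) to drop the reduction to primes and instead apply your ``dominant monomial $\bm$, then compute $\chi_q^{ess}(L(\bm))$'' idea directly to the whole $\bm_J$, and (b) to actually carry out the cross-term cancellation argument, which will bring you back essentially to the paper's construction.
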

\begin{proof}
Let $W_J=L(\bm_J)$. Let $V=L(\bm_J)\in \Fin$ be the highest $\ell$-weight $U_q\bo$ module 
where we consider $\bm_J$ as a monomial in $\mc X$. 
We denote $v_0$ 
the highest $\ell$-weight vector of $V$. 
Then $(U_q\bo_J) v_0\simeq W_J$. 
Now similarly to the construction of $N_{i,a}^+$, 
we consider the tensor product $V\otimes_D M$. 
The module $M$ is a tensor product of sufficiently many factors $M_{i,a}^+$, 
$i\not\in J$, so that its highest $\ell$-weight $\bs{\Psi}^M(z)$ satisfies
$\bigl(\Psi_i^M(z) x^-_{i}(z)v\bigr)_{>}=0$ 
for all $i\not\in J$ and 
$v\in V_\bm$ such that 
$\bm$ appears in $\chi_q\bigl(U_q\bo_J)v_0\bigr)$ but
$\bm A_{i,a}^{-1}$ does not appear in $\chi_q\bigl((U_q\bo_J) v_0\bigr)$.

Let $W$ be the irreducible submodule of $V\otimes_D M$ containing 
$v_0\otimes \ket{\emptyset}_M$.
Then the equality 
$\iota_J(\chi_q^{ess}(W_J))=\chi_q^{ess}(W)$ 
follows from Lemma 3.6 in \cite{FM1}.
\end{proof}
The module $W$ in Proposition \ref{lift prop} is not unique, 
it is defined up to tensor multiplication by various $M_{i,a}^+$. 
We denote the $W$ which corresponds to the minimal choice of 
such factors by $W_J^I$ and call 
it the {\it lift of module $W_J$}. Therefore we have
a map $\mc P_J^I:\ \cO^{fin}_{\bo_J}\to \Fin$, $W_J\mapsto W_J^I$. 
Note that it is not a ring homomorphism.

In particular, if we take $J=\{i\}\subset I$, then 
$\g_J\simeq U_q\widehat{\mathfrak{sl}}_2$, 
the fundamental module $L(Y_{a})$ is just a 2-dimensional 
evaluation module and 
we have $\mc P_J^I(L(Y_a))=N^+_{i,aq}$. 

{\it Example.} Let $\g=\widehat{\mathfrak{sl}}_{n+1}$, $J=\{i\}\subset I$. 
Let $W_J=L(\prod_{p=0}^{s-1}Y_{a q^{-2p}})$ 
be the evaluation 
$U_q\widehat{\mathfrak{sl}}_2$ module of dimension $s+1$. In this case formula \eqref{la char} becomes
\begin{align*}
\chi_q(W_J)=
\prod_{p=0}^{s-1}Y_{a q^{-2p}}
\Big(1+\sum_{j=1}^s\prod_{r=1}^jA^{-1}_{aq^{-2r+3}}\Big).
\end{align*}
Then we have 
$\mc P_J^I(W_J)=L(X_{i-1,aq^2}X_{i+1,aq^2}\prod_{p=0}^{s-1}Y_{i,a q^{-2p}})$.

This module is $(s+1)$-finite and it has a similar $q$-character:
\begin{align*}
&\chi_q(L(X_{i-1,aq^2}X_{i+1,aq^2}\prod_{p=0}^{s-1}Y_{i,a q^{-2p}}))
\\
&=X_{i-1,aq^2}X_{i+1,aq^2}(\prod_{p=0}^{s-1}Y_{i,a q^{-2p}})
\Big(1+\sum_{j=1}^s\prod_{r=1}^jA^{-1}_{i,aq^{-2r+3}}\Big)\bar\chi_{i-1}\bar\chi_{i+1}.
\end{align*}

{\it Example.} Let $\g=\widehat{\mathfrak{sl}}_{3}$. Consider $L(Y_{1,aq^{-2}}Y_{1,a})$. 
It is the $6$-dimensional evaluation module related to partition $\la=(2\geq 0)$, see \eqref{la char}. 
The module $L(Y_{1,aq^{-2}}Y_{1,a}X_{2,aq^2})$ is the $3$-finite module which is a lift of the $3$-dimensional 
$U_q\widehat{\mathfrak{sl}}_{2}$ evaluation module $L(Y_{aq^{-2}}Y_a)$. The module $L(Y_{1,aq^{-2}}Y_{1,a}X_{2,a})$ 
is a $5$-finite module and it is not a lift of a $U_q\widehat{\mathfrak{sl}}_{2}$ module.

\section{Bethe Ansatz}\label{transfer sec}
\subsection{Normalized $R$ matrix}\label{sec:normalized-R}

In this section, we take $u$ to be an indeterminate. 

We define $s_u:\U_q\g\to U_q\g[u^{\pm1}]$ by setting 
$s_u(x)=u^{\hdeg x}x$ for any homogeneous element  $x\in U_q\g$.
We set also 
\begin{align*}
 \cR(u)=\bigl(s_u\otimes\id\bigr)\bigl(\cR\bigr)\quad
\in U_q\bo\widehat{\otimes}U_q\bbo [[u]]\,.
\end{align*}
In formula \eqref{R+} for $\cR_+$,  
the first tensor component of each term acts as 
annihilation operator on modules from $\cO_{\bo}$.
Likewise, in formula \eqref{R-} for $\cR_-$,  
the second tensor component of each term acts as
annihilation operator on modules from $\cO_{\g}$. 
Therefore, if $V\in \Ob\cO_{\bo}$ and $W\in \Ob\cO_\g$, then
each coefficient of the formal series $\cR(u)$ 
is a well defined operator on $V\otimes W$.  

Suppose further that $V$ is a tensor product of highest $\ell$-weight $U_q\bo$ modules,   
and  $W$ is a tensor product of highest $\ell$-weight $U_q\g$ modules.  
Denote by $v_0\in V$ the tensor product of highest $\ell$-weight vectors, and 
by $w_0\in W$ the tensor product of highest $\ell$-weight vectors. 
We write the eigenvalues of $h_{i,r}$ on these vectors as $\br{h_{i,r}}_V$, $\br{h_{i,r}}_W$, respectively. 
From the remark above, we see that 
$\cR(u)\bigl(v_0\otimes w_0\bigr)=f_{V,W}(u)\bigl(v_0\otimes w_0\bigr)$, 
where
\begin{align}
f_{V,W}(u)=q^{-(\wt v_0,\wt w_0)}
\exp\Bigl(-\sum_{r>0\atop i,j\in I}\frac{r\widetilde{B}_{i,j}(q^r)}{q^r-q^{-r}}
(q_i-q_i^{-1})(q_j-q_j^{-1})
\, u^r \br{h_{i,r}}_V\br{h_{j,-r}}_W\Bigr)\,.
\label{fVW}
\end{align}
We have
\begin{align*}
f_{V_1\otimes V_2,W}(u)=f_{V_1,W}(u)f_{V_2,W}(u)\,,
\quad
f_{V, W_1\otimes W_2}(u)=f_{V,W_1}(u)f_{V,W_2}(u)\,.
\end{align*}

We define the normalized $R$ matrix 
$\Rb_{V,W}(u)\in \End\bigl(V\otimes W\bigr)[[u]]$ by 
\begin{align*}
\Rb_{V,W}(u)\bigl(v\otimes w\bigr)=
f_{V,W}(u)^{-1}\cR(u)\bigl(v\otimes w\bigr)\quad
(v\in V, w\in W).
\end{align*}

\begin{lem}
Let $V=M^+_{i,1}$, and let 
$W\in\Ob\cO_\g$ be an irreducible $U_q\g$ module. 
Then the normalized $R$ matrix $\Rb_{M^+_{i,1},W}(u)$
does not have a pole in $u\in\C^{\times}$.
\end{lem}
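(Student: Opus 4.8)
The plan is to argue by contradiction, combining the intertwining property of the normalized $R$ matrix with the rigidity of submodules supplied by Lemma \ref{submodules}.

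For $u\in\C^\times$ let $V_u$ be $M^+_{i,1}$ with the spectral parameter incorporated, i.e.\ the module on which $x\in U_q\g$ acts as $s_u(x)$ acts on $M^+_{i,1}$; since $M^+_{i,1}=L(X_{i,1})$, the module $V_u$ is irreducible with polynomial highest $\ell$-weight (equal to $1-uz$ in direction $i$, to $1$ elsewhere), hence a module as in \eqref{poly-M}. As $\cR(u)=(s_u\otimes\id)(\cR)$ intertwines the $U_q\g$ actions on $V_u\otimes W$ associated with $\Delta$ and with $\Delta^{op}$, and $f_{V,W}(u)$ is a scalar, $\Rb(u):=\Rb_{M^+_{i,1},W}(u)$ is a $U_q\g$ morphism $(V_u\otimes W,\Delta)\to(V_u\otimes W,\Delta^{op})$ with $\Rb(u)(v_0\otimes w_0)=v_0\otimes w_0$, where $v_0=\ket{\emptyset}_{V_u}$ and $w_0$ is the highest $\ell$-weight vector of $W$. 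Since $\cR$ commutes with $k_i\otimes k_i$ and $s_u$ fixes the $k_i$, the operator $\Rb(u)$ preserves the $\tb$-weight decomposition of $V_u\otimes W$, whose weight spaces are finite dimensional. Moreover the entries of $\Rb(u)$ on each weight space are rational in $u$, regular at $u=0$: in the factorization \eqref{univR} the factors $\cR_\pm(u)$ act polynomially in $u$ on each weight space (Corollary \ref{e-delta-zero} and the annihilation property of the relevant tensor leg leave only finitely many terms), and the Cartan factor reduces — because $M^+_{i,1}$ is $1$-finite, with $k_i^{-1}\phi^+_i(z)$ acting by the scalar $1-z$ so that all mixed terms drop out — to an operator $1\otimes C(u)$ acting on $W$ through its $\phi^-$-currents, whose normalized scalar part on each $\ell$-weight space of $W$ evaluates, by a telescoping cancellation resting on the shape of the $q$-deformed inverse Cartan matrix $\widetilde B$ and on \eqref{1+A}, to a polynomial in $u$.

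Suppose now that $\Rb(u)$ had a pole at some $u_0\in\C^\times$, of order $k\ge1$. Multiplying the intertwining relations by $(u-u_0)^k$ and letting $u\to u_0$ (all terms being holomorphic there), we obtain a non-zero, $\tb$-weight preserving $U_q\g$ morphism $\Rb_0\colon(V_{u_0}\otimes W,\Delta)\to(V_{u_0}\otimes W,\Delta^{op})$ with $\Rb_0(v_0\otimes w_0)=0$. Compose with the flip $\sigma$ to get a $U_q\g$ morphism $\widetilde\Rb_0=\sigma\circ\Rb_0\colon(V_{u_0}\otimes W,\Delta)\to(W\otimes V_{u_0},\Delta)$. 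By Lemma \ref{lem:Drinfeld-copro}, applied to the irreducible $U_q\g$ module $W$ and the polynomial-highest-$\ell$-weight module $V_{u_0}$, the target (restricted to $U_q\bo$) is isomorphic to $W\otimes_D V_{u_0}$, and by Lemma \ref{submodules} every non-zero $U_q\bo$ submodule of $W\otimes_D V_{u_0}$ contains $w_0\otimes\ket{\emptyset}_{V_{u_0}}$. In particular $(W\otimes V_{u_0},\Delta)$ is generated by $w_0\otimes\ket{\emptyset}_{V_{u_0}}$, hence $(V_{u_0}\otimes W,\Delta^{op})$ is generated by $v_0\otimes w_0$.

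Since $\widetilde\Rb_0\neq0$, its image is a non-zero $U_q\bo$ submodule of $W\otimes_D V_{u_0}$, so contains $w_0\otimes\ket{\emptyset}_{V_{u_0}}$; therefore $\Im\Rb_0=\sigma(\Im\widetilde\Rb_0)$ contains $v_0\otimes w_0$. As $\Im\Rb_0$ is a submodule of $(V_{u_0}\otimes W,\Delta^{op})$ containing the cyclic generator $v_0\otimes w_0$, it is all of $V_{u_0}\otimes W$, so $\Rb_0$ is surjective; being $\tb$-weight preserving, it is then a surjective endomorphism of each finite-dimensional weight space, hence bijective there. This contradicts $\Rb_0(v_0\otimes w_0)=0$ with $v_0\otimes w_0\neq 0$. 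Hence $\Rb(u)$ has no pole in $u\in\C^\times$. The step I expect to be the main obstacle is the rationality input of the second paragraph — the explicit telescoping evaluation of the normalized Cartan factor in general type, which is precisely what prevents the infinite product resumming $\cR_0(u)$ from having poles accumulating in $\C^\times$.
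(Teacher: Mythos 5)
Your proof follows the same strategy as the paper's: take the residue of the normalized $R$ matrix at a hypothetical pole, observe it is a $U_q\bo$ intertwiner whose image is a submodule of a tensor product (restriction of $U_q\g$ module)$\,\otimes\,$(polynomial highest $\ell$-weight module), and derive a contradiction from Lemma~\ref{submodules}. However, there is a genuine logical gap in your last step. You claim that, since every non-zero submodule of $W\otimes_D V_{u_0}$ contains $w_0\otimes\ket{\emptyset}_{V_{u_0}}$, ``in particular'' $W\otimes V_{u_0}$ is \emph{generated} by that vector. This does not follow: Lemma~\ref{submodules} says each non-zero submodule has the form $W^{(0)}\otimes V_{u_0}$ with $w_0\in W^{(0)}$, i.e.\ there is a unique \emph{minimal} non-zero submodule; it says nothing that forces that minimal submodule to be everything. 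Indeed the construction in Proposition~\ref{prop:2-finite} exhibits situations where $V^{(0)}\otimes M$ is a proper submodule of $V\otimes_D M$ with $V$ an irreducible $U_q\g$ module, so the cyclicity you assert can fail. The detour through surjectivity and bijectivity on weight spaces therefore rests on an unproved (and in general false) premise.

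Fortunately the detour is unnecessary. Once you know that $\Rb_0$ is a $\tb$-weight preserving $U_q\bo$ intertwiner with $\Rb_0(v_0\otimes w_0)=0$, and that the top weight space of $V_{u_0}\otimes W$ is one-dimensional (spanned by $v_0\otimes w_0$, as both factors are highest $\ell$-weight), the image $\Im\Rb_0$ cannot contain the top weight vector. But Lemma~\ref{submodules} says every non-zero submodule does contain it. That is already the contradiction — exactly what the paper does. One further remark: your rationality paragraph is overclaimed. Being $1$-finite means the currents $k_i^{-1}\phi_i^+(z)$ have a single joint \emph{generalized} eigenvalue; it does not say $h_{i,r}$ acts semisimply on $M^+_{i,1}$, so ``all mixed terms drop out'' is not justified, and the telescoping cancellation you invoke is neither precise nor needed. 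The paper, like your proof, tacitly assumes meromorphy in $u$ on each weight space; if you want to address it you should cite the relevant fact directly rather than sketch an uncontrolled resummation of $\cR_0$.
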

\begin{proof}
Recall that $\Rb_{M^+_{i,1},W}(u)(v_0\otimes w_0)=v_0\otimes w_0$, 
where $v_0\in M^+_{i,1}$, $w_0\in W$ are the highest $\ell$-weight
vectors. 
Suppose that $\Rb_{M^+_{i,1},W}(u)$ has a pole at $u=u_0$. 
Let $P:V\otimes W\to W\otimes V$ be the map $P(v\otimes w)=w\otimes v$.
Then $\res_{u=u_0}P\,\Rb_{M^+_{i,1},W}(u): 
V\otimes W(u^{-1}_0)\to W(u^{-1}_0)\otimes V$ 
is an intertwiner of  $U_q\bo$ modules,
where $W(u^{-1}_0)$ is the module $W$ twisted by $s^{-1}_{u_0}$. 
Its image is a non-zero submodule of $W(u_0^{-1})\otimes M^+_{i,1}$,
which does not contain $w_0\otimes v_0$ due 
to the normalization of $\Rb_{M^+_{i,1},W}(u)$.  
However Lemma \ref{submodules} shows that there is no such submodule. 
This is a contradiction. 
\end{proof}

\subsection{Transfer matrices} 

Let $\tilde{p}=(\tilde{p}_i)_{i\in I}$ be indeterminates, and 
set $p=(p_i)_{i\in I}$, $p_i=\prod_{j\in I}\tilde{p}_j^{C_{j,i}}$. 
For $V\in\Ob\cO_\bo$, denote by $\tilde{p}^h\in \End V$ the operator
which acts on each weight vector $v\in V_\mu$ by
$\tilde{p}^h v=(\prod_{i\in I}\tilde{p}_i^{\log\mu_i/\log q_i})v$. 

For an object $V\in \Ob\cO_{\bo}$, 
the twisted transfer matrix 
associated with the `auxiliary space' $V$ 
is a formal series defined by
\begin{align*}
\Tb_V(u;p)&=\Tr_{V,1}\left(\tilde{p}^{-h}\otimes \id\cdot \cR(u)\right)
\quad \in U_q\bbo[\tilde{p}_i^{\pm1}][[u,p_i]]_{i\in I}\,.
\end{align*}
Here $\Tr_{V,1}$ means that the trace is taken on the first tensor component.
Clearly we have 
\begin{align*}
&\Tb_{V_1\oplus V_2}(u;p)=\Tb_{V_1}(u;p)+\Tb_{V_2}(u;p)\,,
\\
&\Tb_{V_1\otimes V_2}(u;p)=\Tb_{V_1}(u;p)\Tb_{V_2}(u;p)\,,
\end{align*}
hence the assignment
$V\mapsto \Tb_V$ gives a homomorphism of rings from 
$\Rep\,U_q\bo$ to $U_q\bbo[\tilde{p}_i^{\pm1}][[u,p_i]]_{i\in I}$.

Element $\Tb_V(u;p)$ gives rise to 
a formal series of operators which act 
on any given `quantum space' $W\in\Ob\cO_\g$.  
It is convenient to use the normalized $R$ matrix and define
\begin{align*}
T_{V,W}(u;p)
&=\Tr_{V,1}\bigl((\tilde{p}^{-h}\otimes\id)\Rb_{V,W}(u)\bigr)\,
\quad \in \End(W)[\tilde{p}_i^{\pm1}][[u,p_i]]_{i\in I}\,,
\end{align*}
so that $\Tb_V(u;p)\bigl|_{W}=f_{V,W}(u)T_{V,W}(u;p)$. 
Note that $T_{V,W}(u;p)$ acts on each subspace of $W$ of fixed 
weight.

\subsection{Bethe Ansatz}

From now on, we choose $W$ to be a tensor product of irreducible 
finite dimensional $U_q\g$ modules. 
Let $w_0\in W$ be the tensor product of highest 
$\ell$-weight vectors, with weight $\mu=\wt w_0$ and highest $\ell$-weight
$\bm=\prod_{i\in I,b\in\C^\times}Y_{i,b}^{m_{i,b}}$. 
Then $\phi^+_i(z)w_0=\prod_{b\in\C^\times} 
((q_i-bz)/(1-q_ibz))^{m_{i,b}}w_0$.
We set 
\begin{align*}
&a_i(u)=\prod_{b}\Bigl(q_iu-b\Bigr)^{m_{i,b}}\,,
\quad
d_i(u)=\prod_{b}\Bigl(u-q_ib\Bigr)^{m_{i,b}}\,,
\end{align*}
and introduce the notation 
\begin{align*}
&Q_i(u;p)=T_{M^+_{i,1},W}(u;p)\,,
\quad
\cT_i(u;p)=a_i(u)T_{N^+_{i,1},W}(u;p)\,.
\end{align*}

The following is an analog of Baxter's relation well known for $\g=\slth$.
\begin{lem}\label{lem:TQrel}
We have the relation
\begin{align}
\cT_i(u;p)Q_i(u;p)&=a_i(u) \prod_{j:C_{j,i}\neq0} Q_j(q_{j,i}^{-1}u;p)
+
p_i d_i(u) \prod_{j:C_{j,i}\neq0} Q_j(q_{j,i} u;p)\,.
\label{TQ-rel}
\end{align}
\end{lem}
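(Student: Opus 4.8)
The plan is to derive \eqref{TQ-rel} from the Grothendieck ring identity in Proposition \ref{TQ relation} by applying the transfer matrix homomorphism $V\mapsto \Tb_V$ and then passing from the unnormalized operators $\Tb_V|_W$ to the normalized operators $T_{V,W}$. First I would recall that $V\mapsto \Tb_V$ is a ring homomorphism from $\Rep\,U_q\bo$ to $U_q\bbo[\tilde p_i^{\pm1}][[u,p_i]]_{i\in I}$, which descends to a well-defined map on the Grothendieck ring since transfer matrices are additive on short exact sequences and multiplicative on tensor products. Hence Proposition \ref{TQ relation}, namely
\begin{align*}
[N_{i,a}^+][M^+_{i,a}]=\prod_{j:C_{j,i}\neq0}[M^+_{j,aq_{j,i}^{-1}}]+\prod_{j:C_{j,i}\neq0}[M^+_{j,aq_{j,i}}]\,,
\end{align*}
applied at $a=1$ and evaluated on the quantum space $W$, immediately gives
\begin{align*}
\Tb_{N^+_{i,1}}(u;p)\bigl|_W\cdot \Tb_{M^+_{i,1}}(u;p)\bigl|_W
=\prod_{j:C_{j,i}\neq0}\Tb_{M^+_{j,q_{j,i}^{-1}}}(u;p)\bigl|_W
+\prod_{j:C_{j,i}\neq0}\Tb_{M^+_{j,q_{j,i}}}(u;p)\bigl|_W\,.
\end{align*}

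Next I would account for the spectral-parameter shifts. The module $M^+_{j,b}$ is the pull-back of $M^+_{j,1}$ by the shift automorphism, so $\Tb_{M^+_{j,b}}(u;p)|_W = \Tb_{M^+_{j,1}}(bu;p)|_W$ up to the scalar prefactor coming from $f_{V,W}$; concretely, with the chosen normalization $\Tb_V|_W = f_{V,W}(u)\,T_{V,W}(u;p)$ one has $T_{M^+_{j,b},W}(u;p) = T_{M^+_{j,1},W}(bu;p) = Q_j(bu;p)$. Taking $b=q_{j,i}^{\mp1}$ produces the arguments $q_{j,i}^{-1}u$ and $q_{j,i}u$ appearing in \eqref{TQ-rel}. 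Similarly $\Tb_{N^+_{i,1}}|_W = f_{N^+_{i,1},W}(u)\,T_{N^+_{i,1},W}(u;p)$ and $a_i(u)T_{N^+_{i,1},W}(u;p)=\cT_i(u;p)$ by definition.

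The only real work is the bookkeeping of the scalar factors $f_{V,W}$ and the twist factors $\tilde p^{-h}$. I would compute the ratio
\begin{align*}
\frac{f_{N^+_{i,1},W}(u)\,f_{M^+_{i,1},W}(u)}{\prod_{j:C_{j,i}\neq0}f_{M^+_{j,q_{j,i}^{-\varepsilon}},W}(u)}
\end{align*}
for each sign $\varepsilon=\pm1$ using the explicit formula \eqref{fVW} together with the highest $\ell$-weights: the eigenvalues $\br{h_{i,r}}$ of the relevant modules are read off from \eqref{qch-M+} and \eqref{qch-N} (the highest $\ell$-weight of $N^+_{i,1}$ being $\bm^{(2)}_{i,1}$, that of $M^+_{j,b}$ being $X_{j,b}$), and the consistency of the two-term relation \eqref{NM} in the Grothendieck ring guarantees that, up to the overall polynomial prefactors $a_i(u)$ and $p_i d_i(u)$, these $f$-ratios match exactly — indeed $f$ is multiplicative, and the combination of highest $\ell$-weights on the two sides of \eqref{NM} agree, forcing the exponential factors to agree while the difference between the finite products $\prod_b(q_iu-b)^{m_{i,b}}$ and $\prod_b(u-q_ib)^{m_{i,b}}$ supplies precisely $a_i(u)$ and $d_i(u)$. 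The factor $p_i$ on the second term arises from the weight shift: comparing $\wt$ of the highest vectors of $\prod_j M^+_{j,q_{j,i}^{-1}}$ versus $\prod_j M^+_{j,q_{j,i}}$ against that of $N^+_{i,1}\otimes M^+_{i,1}$, the difference is $\alpha_i$, and $\tilde p^{-h}$ contributes $\prod_j\tilde p_j^{-C_{j,i}}=p_i^{-1}$... here one must be careful with which term picks up the factor; tracking the definition of $\tilde p^{-h}$ on weight spaces shows the $aq_{j,i}$-product term carries the extra $p_i$. I expect this scalar/twist bookkeeping to be the main obstacle — not conceptually hard, but requiring care to get all shifts, signs, and the placement of $p_i$ exactly right; everything else is a formal consequence of Proposition \ref{TQ relation} and the ring homomorphism property of transfer matrices.
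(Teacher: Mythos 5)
Your proposal follows exactly the paper's approach: apply the ring homomorphism $V\mapsto \Tb_V$ to Proposition \ref{TQ relation}, shift spectral parameters, and then match the $f_{V,W}$ and $\tilde p^{-h}$ prefactors. The paper's proof is precisely this, with the scalar bookkeeping reduced to the single displayed identity $d_i(u)/a_i(u)=\prod_{j:C_{j,i}\neq0}f_{M^+_{j,1},W}(q_{j,i}u)/f_{M^+_{j,1},W}(q_{j,i}^{-1}u)$ (computed from \eqref{fVW}), which you gesture at but do not carry out; the ratio for the first product term is identically $1$ by multiplicativity of $f$ and the equality of highest $\ell$-weights, as you observe.
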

\begin{proof}
This follows from Proposition \ref{TQ relation}
and the following calculation:
\begin{align*}
\frac{d_i(u)}{a_i(u)}&=
\prod_{j:C_{j,i}\neq0}\frac{f_{M^+_{j,1},W}(q_{j,i}u)}{f_{M^+_{j,1},W}(q^{-1}_{j,i}u)}\\
&=q_i^{(\alpha_i,\wt w_0)}\exp\Bigl(\sum_{l,j\in I\atop r>0}\frac{q_{i,l}^r-q_{i,l}^{-r}}{q^r-q^{-r}}\tilde{B}_{l,j}(q^r)(q_j-q_j^{-1})
\br{h_{j,-r}}_Wu^r\Bigr)
\\
&=\br{\phi^-_i(u^{-1})^{-1}}_W\,.
\end{align*}
\end{proof}

We show next that on each weight subspace of $W$ the operators
$Q_i(u;p)$, $\cT_i(u;p)$ are polynomials in $u$. 
Since they are defined as traces on infinite dimensional spaces,  
we need certain estimates on the growth of the matrix elements to ensure the polynomiality.  

For vectors $w^*\in W^*$ and $w\in W$, introduce the 
notation $L_{w^*,w}(u)$ for the matrix coefficients in the second component, 
\begin{align*}
v^*L_{w^*,w}(u)v
=v^*\otimes w^*\Rb_{V,W}(u)
v\otimes w
\quad (v^*\in V^*, v\in V).
\end{align*} 
We regard $V^*$, $W^*$ as right $U_q\bo$ modules. 
The intertwining property of the $R$ matrix
and Lemma \ref{lem:copro} entail the following relations. 
\begin{align}
L_{w^*x^+_{i,k},w}(u)&=
L_{w^*,x^+_{i,k}w}(u)k_i
+u^k(L_{w^*,w}(u)x^+_{i,k}-x^+_{i,k}L_{w^*k_i,w}(u))
\label{Lxp}
\\
&+\sum_j u^{\hdeg a_j} L_{w^*,b_jw}(u)a_j
-\sum_j u^{\hdeg b_j} b_jL_{w^*a_j,w}(u)\,,
\nn\\
L_{w^*,x^-_{i,k}w}(u)&=
k_i^{-1}L_{w^*x^-_{i,k},w}(u)
-u^k(L_{w^*,k_i^{-1}w}(u)x^-_{i,k}-x^-_{i,k}L_{w^*,w}(u))
\label{Lxm}
\\
&-\sum_j u^{\hdeg a'_j} L_{w^*,b'_jw}(u)a'_j
+\sum_j u^{\hdeg b'_j} b'_jL_{w^*a'_j,w}(u)\,.
\nn
\end{align}
Here $a_j\in U_q\bo$, $b_j\in U^+_q\bo$ are such that 
$\wt\, b_j>0$, $\wt\, a_j+\wt\, b_j=\alpha_i$, 
and $a'_j\in U^-_q\bo$, $b'_j\in U_q\bo$ 
are such that $\wt\, a'_j<0$,
$\wt\, a'_j+\wt\, b'_j=-\alpha_i$.
Even though $U_q^-\bo$ is not generated by the $x^-_{j,k}$,  
they are enough to generate the space $W$ from $w_0$.  

\begin{prop}\label{Q pol}
On each weight subspace $W_{\mu \sfq^{-\beta}}$ ($\beta\in Q^+$), $Q_i(u;p)$ is a polynomial in $u$
of degree at most $(\omega^\vee_i,\beta)$.  
\end{prop}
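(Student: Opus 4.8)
The statement says that $Q_i(u;p) = T_{M^+_{i,1},W}(u;p)$, when restricted to the weight space $W_{\mu\sfq^{-\beta}}$, is a polynomial in $u$ of degree at most $(\omega^\vee_i,\beta)$. Since $Q_i(u;p)$ is a trace over the infinite-dimensional auxiliary space $M^+_{i,1}$, I first need to know that it is a \emph{convergent} power series (rather than just formal), and then control its growth in $u$. The key structural input is the grading of $M = M^+_{i,1}$ furnished by Proposition~\ref{prop:gradingM} (applied with $N=1$, $a_1=1$): $M=\oplus_{m\ge0}M[m]$ with $M[0]=\C\ket{\emptyset}_M$, and the root vectors $e_{k\delta\pm\alpha}$ and the normalized currents $\overline\phi^+_{i,k}$ raise the grading by $k$ (up to a bounded correction governed by $(\omega^\vee,\alpha)$). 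The plan is to show that the matrix coefficient of $\Rb_{M,W}(u)$ between homogeneous components $M[m]$ and $M[m']$ carries a factor $u^{m}$ in a suitable sense, so that the trace, which involves only diagonal blocks $M[m]\to M[m]$, has its $u$-degree on a fixed weight space of $W$ bounded by how far up the grading one is forced to go — and that, in turn, is bounded by $(\omega^\vee_i,\beta)$ because each application of a creation operator in $\cR_+$ or of the non-trivial part of $\cR_0$ that changes the $W$-weight by a root $\alpha_j$ pushes the $M$-grading up by an amount tied to $(\omega^\vee_i,\alpha_j)$.

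\textbf{Key steps.} First I would use the product form $\cR(u)=\cR_+(u)\cR_0(u)\cR_-(u)q^{-t_\infty}$ and the fact that on $M\otimes W$ (with $M\in\cO_\bo$, $W\in\cO_\g$) the factor $\cR_-$ acts as the identity modulo operators whose second component lowers the $W$-weight — but since we will trace and pair against $w_0$-type estimates using \eqref{Lxp}, \eqref{Lxm}, the cleaner route is to work with the matrix coefficients $L_{w^*,w}(u)\in\End(M)$ defined just before the statement. Second, I would set up a grading argument: write $\Rb_{M,W}(u)$ in terms of the grading $M=\oplus M[m]$ and observe, using \eqref{Phi x-}--\eqref{Phi phi} and the structure of $\cR_+$ (products of $\exp_{q_\beta}(-(q_\beta-q_\beta^{-1})e_\beta\otimes f_\beta)$ with $e_{k\delta+\alpha}$ carrying $s_u$-degree contributing $u^k$ and raising $M$-grading by $k-j$, $0\le j\le(\omega^\vee_i,\alpha)$), that the block $M[m']\to M[m]$ of $L_{w^*,w}(u)$ is, after extracting $w$-weight, a polynomial in $u$ of controlled degree. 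Third — the heart — I would induct on $\beta\in Q^+$: for $w\in W_{\mu\sfq^{-\beta}}$ write $w$ as $x^-_{j,k}w'$ with $w'\in W_{\mu\sfq^{-\beta+\alpha_j}}$ and feed this into \eqref{Lxm}; the recursion shows that passing from $\beta-\alpha_j$ to $\beta$ multiplies the relevant $u$-degree bound by at most $u^{(\omega^\vee_i,\alpha_j)}$ (the exponent coming from the grading shift in Proposition~\ref{prop:gradingM}\eqref{gradingM3} and \eqref{gradingM5}, together with the $u^k$ and $u^{\hdeg}$ factors in \eqref{Lxm}), and summing over a path from $0$ to $\beta$ gives total degree $\le(\omega^\vee_i,\beta)$. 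Finally, taking the trace $\Tr_{M,1}((\tilde p^{-h}\otimes\id)\Rb_{M,W}(u))$ only sees the diagonal blocks $M[m]\to M[m]$; convergence in $u$ on a finite-rank weight space of $W$ follows because on each such block the contributions from $M[m]$ for large $m$ vanish (one is forced up in $M$-grading but $\beta$ is fixed), so the sum over $m$ is effectively finite, yielding a genuine polynomial in $u$ of degree $\le(\omega^\vee_i,\beta)$.

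\textbf{Main obstacle.} The delicate point is bookkeeping the $u$-degree precisely through the recursion \eqref{Lxm}: the terms $\sum_j u^{\hdeg a'_j}L_{w^*,b'_j w}(u)a'_j$ and $\sum_j u^{\hdeg b'_j}b'_j L_{w^*a'_j,w}(u)$ involve $a'_j\in U^-_q\bo$, $b'_j\in U_q\bo$ whose homogeneous degrees are not simply $k$, so I must argue that the combined contribution $u^{\hdeg}\cdot(\text{grading shift in }M)$ is still bounded by $(\omega^\vee_i,\text{weight change})$. This is exactly where Proposition~\ref{prop:gradingM} does the work — the bound $\sum_{j=0}^{(\omega^\vee,\alpha)}M[m+k-j]$ in \eqref{gradingM3}, \eqref{gradingM5} is tailored so that the grading-shift deficit is $(\omega^\vee_i,\alpha)$ regardless of how $\alpha$ is decomposed — but verifying that the estimate survives the noncommutative reshuffling in \eqref{Lxp}--\eqref{Lxm} (and that the $\cR_0$-factor $q^{-t_\infty}$ contributes nothing to the $u$-degree while $\cR_0$ itself contributes only through the already-extracted scalar $f_{M,W}(u)$) is the part requiring genuine care rather than routine computation.
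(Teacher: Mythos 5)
Your proposal follows the paper's approach in broad outline — the grading $M=\oplus M[m]$ of Proposition~\ref{prop:gradingM}, the conjugation by $u^\partial$ (the operator acting as $u^m$ on $M[m]$), and an induction through the intertwining relations \eqref{Lxp}, \eqref{Lxm}. The paper reduces the claim to the asymptotic bound $u^{-\partial}L_{w^*,w}(u)u^\partial=O(u^{(\omega_i^\vee,\beta)})$ as $u\to\infty$ and then establishes it by induction. So the plan is the right one.

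There are, however, two places where your bookkeeping is off in a way that would derail the argument if pushed through. First, you attribute the degree growth to the step $w\mapsto x^-_{j,k}w$ via \eqref{Lxm}, claiming each lowering of $w$ by $\alpha_j$ costs a factor $u^{(\omega^\vee_i,\alpha_j)}$. That is the wrong side. The grading property \eqref{gradingM2} makes $u^{\hdeg x}\,u^{-\partial}x\,u^\partial$ \emph{exactly independent of $u$} for $x\in U^-_q\bo$; so the $w$-lowering recursion \eqref{Lxm}, whose auxiliary-space operators $a'_j$ sit in $U^-_q\bo$, contributes $O(1)$. The growth $O(u^{(\omega_i^\vee,\wt x)})$ comes from $x\in U^+_q\bo$ via \eqref{gradingM3}, and those appear in the $w^*$-recursion \eqref{Lxp} (the $b_j\in U^+_q\bo$ terms). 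Since one must traverse \emph{both} $w_0\to w$ and $w_0^*\to w^*$ to reach a general matrix element at weight $\mu\sfq^{-\beta}$, assigning a $u$-cost to both sides would over-count and yield $u^{2(\omega_i^\vee,\beta)}$ rather than the stated bound; the sharp exponent only falls out once one knows the $U^-_q\bo$-side is $u$-free. Second, your claim that ``$\cR_0$ itself contributes only through the already-extracted scalar $f_{M,W}(u)$'' is incorrect: after dividing by $f_{M,W}(u)$, the $\cR_0$-part still acts nontrivially on $M[m]$ for $m>0$ through $\bar h_{i,r,V}=h_{i,r}-\br{h_{i,r}}_V$. This is exactly the base case $L_{w_0^*,w_0}(u)$ of the induction, and it is handled not by saying it is scalar but by showing, via \eqref{gradingM4}, that $u^r\,u^{-\partial}\bar h_{i,r,V}u^\partial$ is $u$-independent, hence $u^{-\partial}L_{w_0^*,w_0}(u)u^\partial$ is $u$-independent. (Also note $q^{-t_\infty}$ and $\cR_0$ are distinct factors of \eqref{univR}, not the same one.) Once you relocate the degree count to the $w^*$-side and treat the $\cR_0$ base case via \eqref{gradingM4} instead of dismissing it as scalar, the argument closes as in the paper.
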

\begin{proof}
Set $V=M^+_{i,a}$, 
and let $V=\oplus_{m=0}^\infty M[m]$ 
be the grading as in Proposition \ref{prop:gradingM}.   
Denote by $u^\partial$ the operator $u^\partial\bigl|_{M[m]}=u^m\times\id_{M[m]}$. 
The assertion is proved if we show that as $u\to\infty$ 
\begin{align}
u^{-\partial} L_{w^*,w}(u)u^{\partial} =
O(u^{(\omega_i^\vee,\beta)})\qquad (w^*\in W^*_{\mu\sfq^{-\beta}},\ w\in W).
\label{growthM}
\end{align}

We set $\bar h_{i,r,V}=h_{i,r}-\br{h_{i,r}}_V$.
Proposition \ref{prop:gradingM} implies that the operators  
 $u^ru^{-\partial}\bar h_{i,r,V}u^\partial$ and  
$u^{\hdeg x}u^{-\partial} x u^\partial$ ($x\in U^-_q\bo$) 
are independent of $u$, while 
\[
u^{\hdeg x}u^{-\partial} x u^\partial=O(u^{(\omega_i^\vee,\wt\, x)}) 
\quad (x\in U^+_q\bo).
\]

Consider the case where $w^*=w_0^*$ is the lowest weight vector
and $w=w_0$ is the highest weight vector. Then we have
\begin{align*}
L_{w_0^*,w_0}(u)=
\exp\Bigl(-\sum_{r>0\atop k,j\in I}\frac{r\widetilde{B}_{k,j}(q^r)}{q^r-q^{-r}}
(q_k-q_k^{-1})(q_j-q_j^{-1})
\, u^r \bar h_{k,r,V}\br{h_{j,-r}}_W\Bigr)\,.
\end{align*}
From the remark above we see that
$u^{-\partial}L_{w_0^*,w_0}(u)u^\partial$ is independent of $u$. 
Hence \eqref{growthM} holds in this case. 

By induction on the height of $\beta$, and using \eqref{Lxm},
we can show that $u^{-\partial}L_{w_0^*,w_2}(u)u^\partial=O(1)$ for all $w\in W$. 
Finally \eqref{growthM} in the general case follows from \eqref{Lxp}. 
\end{proof}

\begin{prop}\label{T pol}
On  each weight subspace of $W$, $\cT_i(u;p)$ is a polynomial in $u$.
\end{prop}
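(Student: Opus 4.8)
The plan is to combine the $TQ$-relation of Lemma~\ref{lem:TQrel} with the polynomiality of the operators $Q_j$ (Proposition~\ref{Q pol}) and with a no-pole property of the transfer matrix attached to $N^+_{i,1}$, paralleling the Lemma in Section~\ref{sec:normalized-R}. Fix a weight subspace $W'=W_{\mu\sfq^{-\beta}}$ of $W$; it is finite dimensional, and all operators below are understood on $W'$. By Proposition~\ref{Q pol} each $Q_j(u;p)$ acts as a polynomial in $u$ on the relevant weight subspaces, so the right-hand side of \eqref{TQ-rel} is a polynomial in $u$; hence $\cT_i(u;p)Q_i(u;p)$ is a polynomial in $u$ with coefficients in $\End(W')[\tilde p_i^{\pm1}]$.

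First I would show $Q_i(u;p)$ is invertible over the field of rational functions in $u$, i.e.\ $\det Q_i(u;p)\not\equiv0$. Inspecting \eqref{univR}--\eqref{R-} with the homogeneous-degree grading, at $u=0$ only the ``finite'' part of $\cR_+$ (which raises the weight in the first tensor factor) and $q^{-t_\infty}$ survive in $\cR(u)$, so $Q_i(0;p)=T_{M^+_{i,1},W}(0;p)$ acts on each weight subspace of $W$ as a nonzero scalar --- a value of the character of $M^+_{i,1}$ at a point determined by $\tilde p_i$ and the weight. Hence, on $W'$, the $TQ$-relation gives $\cT_i(u;p)=\bigl(a_i(u)\prod_{j:C_{j,i}\neq0}Q_j(q_{j,i}^{-1}u;p)+p_id_i(u)\prod_{j:C_{j,i}\neq0}Q_j(q_{j,i}u;p)\bigr)Q_i(u;p)^{-1}$, a matrix of rational functions of $u$ whose poles can only lie at the finitely many zeroes of $\det Q_i(u;p)$; note $\cT_i(u;p)=a_i(u)T_{N^+_{i,1},W}(u;p)$ is regular at $u=0$, being a polynomial times a power series in $u$.

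Next I would rule out poles in $\C^\times$, by the residue argument of the Lemma in Section~\ref{sec:normalized-R} applied now to $N^+_{i,1}$: a pole of $\Rb_{N^+_{i,1},W}(u)$ at $u_0\in\C^\times$ would yield, via $\res_{u=u_0}P\,\Rb_{N^+_{i,1},W}(u)$ (with $P$ the flip), a nonzero $U_q\bo$-intertwiner $N^+_{i,1}\otimes W(u_0^{-1})\to W(u_0^{-1})\otimes N^+_{i,1}$ whose image is a nonzero submodule of $W(u_0^{-1})\otimes N^+_{i,1}$ not containing the product of highest $\ell$-weight vectors. To contradict this one needs the analogue of Lemma~\ref{submodules} for $N^+_{i,1}$: every nonzero $U_q\bo$-submodule of $W''\otimes N^+_{i,1}$, with $W''$ the restriction of an object of $\cO_\g$, contains the product of highest $\ell$-weight vectors. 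I would obtain this by re-running the proof of Lemma~\ref{submodules} with $N^+_{i,1}$ in place of $M$, using the realization of $N^+_{i,1}$ as the submodule $V^{(0)}\otimes_D M$ of $V\otimes_D M$ from the proof of Proposition~\ref{prop:2-finite} (with $V=L(Y_{i,q_i^{-1}})$ finite dimensional and $M$ of polynomial highest $\ell$-weight), Lemma~\ref{polyn act} for $M$, and $2$-finiteness of $N^+_{i,1}$ --- the single additional $\ell$-weight direction $v_1$ coming from $V^{(0)}$ being controlled by $x^+_{j,\geqslant}(z)$ exactly as in the proof of Proposition~\ref{prop:2-finite}. It follows that $\Rb_{N^+_{i,1},W}(u)$, hence $T_{N^+_{i,1},W}(u;p)$ and $\cT_i(u;p)$, has no pole for $u\in\C^\times$.

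Putting the two steps together, $\cT_i(u;p)|_{W'}$ is a rational function of $u$ regular on all of $\C$, hence a polynomial in $u$; since $W'$ was an arbitrary weight subspace, the proposition follows. The main obstacle is the submodule statement in the third step --- upgrading Lemma~\ref{submodules} from the $1$-finite module $M$ to the $2$-finite module $N^+_{i,1}$; the rationality reduction in the first two steps is routine given Proposition~\ref{Q pol}. An alternative that bypasses the $TQ$-relation would mimic the proof of Proposition~\ref{Q pol} directly for the auxiliary space $N^+_{i,1}$: the grading of $M$ from Proposition~\ref{prop:gradingM}, the finite dimensionality of $V^{(0)}$, and the normalizing polynomial $a_i(u)$ (which absorbs the $A^{-1}_{i,a}$-direction) should bound the matrix coefficients $L_{w^*,w}(u)$ sharply enough for the trace defining $\cT_i(u;p)$ to truncate to a polynomial in $u$ on each weight subspace.
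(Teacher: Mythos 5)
Your main route breaks at step~3, and the breakdown is exactly at the place you flag as the ``main obstacle'': the desired upgrade of Lemma~\ref{submodules} (and, with it, the no-pole lemma of Section~\ref{sec:normalized-R}) to the $2$-finite module $N^+_{i,1}$ is \emph{false}. The proof of Lemma~\ref{submodules} hinges on $M$ being $1$-finite: in $\Delta_D(x^-_{i,k})w=(\sum_{j\ge0}x^-_{i,k-j}\otimes\phi^+_{i,j})w+(1\otimes x^-_{i,k})w$, the two summands lie in \emph{different} $\ell$-weight spaces precisely because $x^-_{i,k}$ does not change the $\ell$-weight of the second tensor factor. For $N^+_{i,1}=N_0\oplus N_1$, the operator $x^-_{i,k}$ moves $N_0$ into $N_1$, so the separation argument fails and nothing forces a submodule of $W''\otimes N^+_{i,1}$ to contain $w_0\otimes v_0$. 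Concretely, in $\g=\slth$ one has $N^+_{1,a}=L(Y_{aq^{-1}})$, the $2$-dimensional evaluation module, and tensor products $W\otimes N^+_{1,a}$ with $W$ $2$-dimensional at resonant spectral parameter have proper submodules missing $w_0\otimes v_0$; equivalently, $\Rb_{N^+_{i,1},W}(u)$ genuinely has poles in $\C^\times$. The paper's own computation exhibits them: on the ``highest'' matrix coefficient, $L_{w_0^*,w_0}(u)$ restricted to $N_1$ equals $d_i(u)/a_i(u)$, which has poles at the zeroes of $a_i(u)$. These are exactly the poles the normalizing factor $a_i(u)$ in $\cT_i(u;p)=a_i(u)T_{N^+_{i,1},W}(u;p)$ is designed to absorb; a no-pole lemma for $\Rb_{N^+_{i,1},W}$ cannot be true, and the residue contradiction you try to draw is not available.

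Without step~3, the $TQ$-relation alone does not close the argument: steps~1--2 only show $\cT_i(u;p)$ is a rational function whose possible poles sit at the zeroes of $\det Q_i(u;p)$, and to rule those out from the relation~\eqref{TQ-rel} you would need the right-hand side to vanish at those zeroes --- but that vanishing is precisely the Bethe ansatz equations~\eqref{BAE}, which the paper \emph{derives from} the polynomiality of $\cT_i$. So this route is circular unless supplemented by an independent input. The paper's actual proof is the ``alternative'' you sketch in your last paragraph: run the matrix-coefficient/trace argument of Proposition~\ref{Q pol} directly for $N^+_{i,1}$, using the two-component decomposition $N_s=\C v_s\otimes M$, the grading $M=\oplus_m M[m]$ from Proposition~\ref{prop:gradingM}, the explicit formula for $L_{w_0^*,w_0}(u)$ (where $a_i(u)$ visibly clears the pole on the $N_1$ block), and then the intertwining relations~\eqref{Lxp}--\eqref{Lxm} with induction on the weight to bound $u^{-\partial}L_{w^*,w}(u)u^\partial$. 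That sketch is sound and is what you should promote to the main argument, discarding the residue step.
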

\begin{proof}
Set $V=N_{i,a}^+$, and let $V=N_0\oplus N_1$ 
be the decomposition into $\ell$-weight subspaces. 
In the notation of the proof of Proposition \ref{prop:2-finite}, we have 
$N_s=\C v_s\otimes M \subset V^{(0)}\otimes_D M$, $M=\bigotimes_{j:C_{j,i}<0}M^+_{j,aq_{j,i}^{-1}}$.
Set $\omega^\vee=\sum_{j:C_{j,i}<0}\omega^\vee_j$. 
Using the grading $M=\oplus_{m=0}^\infty M[m]$ of $M$ in Proposition \ref{prop:gradingM}
we set $N_s[m]=v_s\otimes M[m]$. Let $u^\partial$ be the operator which act as scalar $u^m$ on 
$N_j[m]$. 
We have then
\begin{align*}
L_{w_0^*,w_0}(u)
&=
\exp\Bigl(-\sum_{r>0\atop k,j\in I}\frac{r\widetilde{B}_{k,j}(q^r)}{q^r-q^{-r}}
(q_k-q_k^{-1})(q_j-q_j^{-1})
\, u^r \bar h_{k,r,N_0}\br{h_{j,-r}}_W\Bigr)\,
\\
&=\frac{d_i(u)}{a_i(u)}
\exp\Bigl(-\sum_{r>0\atop k,j\in I}\frac{r\widetilde{B}_{k,j}(q^r)}{q^r-q^{-r}}
(q_k-q_k^{-1})(q_j-q_j^{-1})
\, u^r \bar h_{k,r,N_1}\br{h_{j,-r}}_W\Bigr)\,.
\end{align*}
Hence $a_i(u)L_{w_0^*,w_0}(u)$ is a polynomial on each vector.

Acting with $\Delta_D(x^\pm_{j,k})$ we find 
\begin{align*}
&u^k u^{-\partial}x^-_{j,k}u^{\partial} N_s[m]\subset N_s[m+k]+
\delta_{s,0}\delta_{i,j}\sum_{l\ge0}u^{k-l}N_1[m+l]\,,
\\
&u^k u^{-\partial}x^+_{j,k}u^{\partial} N_s[m]
\subset \sum_{r=0}^{(\omega^\vee,\alpha_j)}u^r \sum_{l\ge k}u^{k-l}N_s[m+l-r]+
\delta_{s,1}\delta_{i,j}u^{k}N_0[m]\,.
\end{align*}
Note that we need only a finitely many of the $x^{\pm}_{j,k}$'s
in order to generate $W$ (resp. $W^*$) from $w_0$ (resp. $w_0^*$). 
The rest of the argument is the same as in the proof of 
Proposition \ref{Q pol}; we use the intertwining relations \eqref{Lxp}, \eqref{Lxm}
and induction on the weight to prove that $u^{-\partial}L_{w^*,w}(u)u^\partial =O(u^{K})$ 
for some $K=K_{w^*,w}$. 
\end{proof}
\medskip

Let $w$ be an eigenvector of $Q_i(u;p)$ with eigenvalue $Q_{i,w}(u;p)$. 
By Proposition \ref{Q pol}, we can write
$Q_{i,w}(u;p)=Q_{i,w}(0;p)\prod_{\nu=1}^{N_i}(1-u/\zeta_{i,\nu})$. 
Substituting $u=\zeta_{i,\nu}$ into \eqref{TQ-rel}, and using the polynomiality of $\cT_i(u;p)$, 
we obtain the Bethe ansatz equations
\begin{align}
p_i\frac{d_i(\zeta_{i,\nu})}{a_i(\zeta_{i,\nu})}
\prod_{j\in I}
\prod_{\mu=1}^{N_j}\frac{1-q_{j,i}\zeta_{i,\nu}/\zeta_{j,\mu}}
{1-q_{j,i}^{-1}\zeta_{i,\nu}/\zeta_{j,\mu}}
=-1
\quad (i\in I, \nu=1,\cdots,N_i).
\label{BAE}
\end{align}

The corresponding eigenvalue of the normalized transfer matrix 
can be obtained from the $q$-character of the `auxiliary space' $V$.
The recipe is given as follows \cite{FH}.
\begin{thm}\label{T-eigv}
Let $w$  be an eigenvector of $T_{V,W}(u;p)$ of weight 
$(q_i^{\nu_i})$. 
Then the corresponding eigenvalue of $f_{V,W}(u;p)T_{V,W}(u;p)$
is obtained from $\chi_q(V)$ by the substitution 
\[
X_{i,a} \to f_{M^+_{i,1},W}(a;p)Q_{i,w}(a;p)\,,\quad
y_i^{b_i} \to q^{-(b_i\omega_i,\sum_{j}\nu_j\omega_j)}.
\]
\end{thm}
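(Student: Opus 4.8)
The plan is to reduce the statement to the two facts already available: (1) the transfer-matrix assignment $V\mapsto\Tb_V$ is a ring homomorphism from $\Rep\,U_q\bo$ to $U_q\bbo[\tilde p_i^{\pm1}][[u,p_i]]$, which by Proposition \ref{inj} factors through the $q$-character; and (2) the explicit eigenvalue computation for the building blocks $M^+_{i,a}$ and for one-dimensional modules. So first I would observe that, by Corollary \ref{A cor}, after normalization $\chi_q(V)$ lies in the subring of $\mc X$ generated by $X_{i,a}^{\pm1}$, $y_i^b$ and the $A_{i,a}^{-1}$; but in fact since $V\in\Fin$ we may express $\chi_q(V)$ — or rather the class $[V]$ in $\Rep_F\,U_q\bo$ after suitable normalization — as a $\Z$-linear combination of classes of tensor products of the $M^+_{i,a}$'s and one-dimensional modules $L(y^{\bs b})$, using the topological generation statement (Corollary following Theorem \ref{class thm}) together with the fact that restrictions of $U_q\g$ fundamental modules themselves satisfy relations in $\Rep_F\,U_q\bo$ expressing them through the $M^+_{i,a}$ (this is exactly what makes $\chi_q(V)$ a Laurent polynomial in $\chi_q(M^+_{i,a})$ in the $y$-independent case; in general one carries the $y_i^{b}$ factors along). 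The key point is that the substitution map in the theorem, $X_{i,a}\mapsto f_{M^+_{i,1},W}(a;p)Q_{i,w}(a;p)$ and $y_i^{b_i}\mapsto q^{-(b_i\omega_i,\sum_j\nu_j\omega_j)}$, is a ring homomorphism out of $\mc X$ (restricted to the relevant subring), so it suffices to check it reproduces the eigenvalue on generators.

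Concretely, the main computation is: for $V=M^+_{i,a}$ acting as auxiliary space, the eigenvalue of $\Tb_{M^+_{i,a}}(u;p)=f_{M^+_{i,a},W}(u)\,T_{M^+_{i,a},W}(u;p)$ on $w$ is $f_{M^+_{i,1},W}(a;p)\,Q_{i,w}(a;p)$ — this follows from the definition $Q_i(u;p)=T_{M^+_{i,1},W}(u;p)$ together with the spectral-parameter shift $M^+_{i,a}\cong$ (twist of $M^+_{i,1}$ by $s_a$) and the corresponding compatibility of $f_{V,W}$ under this twist; and $\chi_q(M^+_{i,a})=X_{i,a}\bar\chi_i$ by \eqref{qch-M+}, so the substitution sends $\chi_q(M^+_{i,a})$ to $f_{M^+_{i,1},W}(a;p)Q_{i,w}(a;p)\cdot\varpi$-type image of $\bar\chi_i$; one must check that the $\bar\chi_i$ factor contributes exactly the scalar already built into $f_{M^+_{i,1},W}$, or equivalently that normalizing by $f$ kills it — this is where one uses that $\bar\chi_i$ evaluated under $X_{j,c}\mapsto f_{M^+_{j,1},W}(c;p)Q_{j,w}(c;p)$ telescopes using \eqref{TQ-rel}/Lemma \ref{lem:TQrel}. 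For the one-dimensional module $L(y^{\bs b})$, the transfer matrix acts on the weight-$(q_i^{\nu_i})$ space of $W$ simply by $q^{-(\wt v_0,\sum_j\nu_j\omega_j)}=q^{-(\sum_i b_i\omega_i,\sum_j\nu_j\omega_j)}$ coming from the $q^{-t_\infty}$ factor in \eqref{univR} together with the $\tilde p^{-h}$ insertion, matching $y_i^{b_i}\mapsto q^{-(b_i\omega_i,\sum_j\nu_j\omega_j)}$.

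Having checked the generators, I would then invoke multiplicativity: $\Tb_{V_1\otimes V_2}=\Tb_{V_1}\Tb_{V_2}$ and $\chi_q(V_1\otimes V_2)=\chi_q(V_1)\chi_q(V_2)$, and additivity on direct sums / short exact sequences, so both sides of the claimed identity are ring homomorphisms from $\Rep_F\,U_q\bo$ (equivalently from the image of $\chi_q$) to $\End(W)$-valued functions of $u$, agreeing on a generating set. Since $w$ is a simultaneous eigenvector of all the commuting $T_{V,W}(u;p)$ (they commute by the Yang–Baxter equation), evaluating at $w$ gives the scalar identity claimed. The one subtlety to handle carefully is well-definedness of the substitution on the formal power series part: $\chi_q(V)$ for $V\in\Fin$ involves $\bar\chi_i^{d_i}$ which is an infinite series in the $A_{i,a}^{-1}$, and one must check the substituted series converges (as a rational function of $u$, in fact a polynomial after clearing the $a_i,d_i$ denominators) — this is guaranteed by Propositions \ref{Q pol} and \ref{T pol} together with the $TQ$-relation, which is exactly why those polynomiality statements were proved first.

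\textbf{Main obstacle.} The hard part is not the formal ring-homomorphism bookkeeping but verifying the precise normalization: that the scalar function $f_{M^+_{i,1},W}(a;p)$ appearing in the substitution is exactly the one that, when the $\bar\chi_i$ factors in $\chi_q(V)$ are expanded and substituted, recombines via the Baxter relation \eqref{TQ-rel} into the genuine eigenvalue of the (un-normalized) transfer matrix $\Tb_{V}(u;p)$ — in other words, tracking the interplay between the $\cR_0$-induced prefactors $f_{V,W}$ for composite $V$ and the algebraic identities among the $Q_j$'s. This is essentially the content of \cite{FH} in the finite-dimensional auxiliary-space case, and the point here is that the argument goes through verbatim once the auxiliary space is allowed to be any finite-type module, because all the needed structural inputs (polynomiality, the $TQ$-relation, injectivity of $\chi_q$) have been established above.
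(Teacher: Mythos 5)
The paper does not actually give a proof of Theorem~\ref{T-eigv}: the text preceding it says ``The recipe is given as follows \cite{FH}'' and the theorem is stated without a proof environment, so there is no internal argument to compare your proposal against. What you propose is, however, a plausible reconstruction of the line of reasoning in \cite{FH}, and it uses exactly the ingredients the present paper has set up to make that argument go through in the finite-type setting: the ring homomorphism $V\mapsto\Tb_V$, injectivity of $\chi_q$ (Proposition~\ref{inj}), topological generation of $\Rep_F\,U_q\bo$ by $M^+_{i,a}$ and one-dimensionals, and the polynomiality statements (Propositions~\ref{Q pol}, \ref{T pol}). Two places in your sketch are where the real work would be. First, the reduction to generators followed by ``use multiplicativity'' has to be made precise because $\chi_q(V)$ for $V\in\Fin$ involves the genuinely infinite series $\bar\chi_i^{d_i(\bm)}$; you flag this, but ``guaranteed by Propositions~\ref{Q pol} and \ref{T pol} together with the $TQ$-relation'' is asserted rather than argued, and one would need to show that the substituted series, truncated degree by degree in the $A_{i,a}^{-1}$'s, matches the trace $\Tr_{V,1}(\tilde p^{-h}\otimes\id)\Rb_{V,W}(u)$ computed block by block over $\ell$-weight spaces (this is where the grading of Proposition~\ref{prop:gradingM} enters, not just the $TQ$-relation). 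Second, the bookkeeping between the spectral parameter $u$ and the labels $a$ in $X_{i,a}$ deserves care: the statement as printed looks $u$-independent on the right, and the correct reading is that $\Tb_V(u;p)=\Tb_{V(u)}(1;p)$ where $V(u)$ is the $s_u$-twist and $\chi_q(V(u))$ is $\chi_q(V)$ with $a\mapsto ua$; your base computation for $V=M^+_{i,a}$ implicitly uses this identification but does not spell it out, and getting the prefactor $f_{M^+_{i,1},W}(a;p)$ (rather than $f_{M^+_{i,a},W}(u)$) to come out correctly requires tracking the one-dimensional twist by $L((a^{-\delta_{ij}/2})_{j})$ built into the definition of $M^+_{i,a}$. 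These are points to tighten rather than wrong turns; the overall strategy is the expected one.
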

\appendix

\section{Root vectors}
 
Following \cite{Be,Be2,Da}, 
we review the definition and 
known facts about root vectors of $U_q\g$.
In this section we take $q$ to be an indeterminate and work over 
$\C(q)$. 
\medskip

Let 
$\Omega, \Xi:U_q\g\to U_q\g$ be anti-isomorphisms of $\C$-algebras 
defined by
\begin{align*}
&\Omega e_i=f_i\,,\quad \Omega f_i=e_i\,,\quad \Omega k_i=k^{-1}_i\,
\quad 
(0\le i\le n),\quad \Omega q=q^{-1},\\
&\Xi e_i=e_i\,,\quad \Xi f_i=f_i\,,\quad \Xi k_i=k^{-1}_i\,
\quad (0\le i\le n),\quad \Xi q=q\,.
\end{align*}
Denote by  $s_i$ ($0\le i\le n$) the simple reflections. 
For $\alpha=\sum_{i\in I}b_i\alpha_i\in Q$ 
we set $k_\alpha=\prod_{i\in I}k_i^{b_i}$. 
Lusztig's automorphisms $\{T_i\}_{0\le i\le n}$ of $U_q\g$ are 
characterized by the following properties: 
\begin{align*}
&T_ik_\alpha=k_{s_i\alpha}\quad (\alpha\in Q)\,,\\
&T_ie_j=
\begin{cases}
-f_ik_i & (i=j),
\\
\sum_{r+s=-C_{i,j}}(-1)^sq_i^{-r}e_i^{(s)}e_je_i^{(r)}
&(i\neq j),	\\
\end{cases}
\\
&\Omega T_i=T_i \Omega\,,\quad \Xi T_i=T_i^{-1}\Xi\,.
\end{align*}
For an element $w$
of the affine Weyl group $W=\langle s_i\ (0\le i\le n)\rangle$, 
we define $T_w=T_{i_1}\cdots T_{i_m}$
where $w=s_{i_1}\cdots s_{i_m}$ is a reduced expression. 
This definition does not depend on the chosen expression of $w$.  

There is an embedding of groups
$t: Q^\vee=\sum_{i\in I}\Z\alpha^\vee_i\to W$, $x^\vee\mapsto t_{x^\vee}$, 
such that $t_{x^\vee}(\alpha)=\alpha-(x^\vee,\alpha)\delta$
($\alpha\in Q$).
Fix an element $x^\vee\in  Q^\vee$
which satisfies $(x^\vee,\alpha_i)>0$ for all $i\in I$.
Fix also a reduced decomposition $t_{x^\vee}=s_{i_1}\cdots s_{i_N}$
such that $i_1=0$,  
and introduce a sequence $\{i_r\}_{r\in \Z}$ 
by setting $i_{r+N}=i_r$ for $r\in \Z$. 
We define $\{\beta_r\}_{r\in \Z}$ by
\begin{align*}
\beta_r=\begin{cases}
s_{i_1}\cdots s_{i_{r-1}}\alpha_{i_r}& (r\ge1),\\
s_{i_0}\cdots s_{i_{r+1}}\alpha_{i_r}& (r\le0).\\
\end{cases} 
\end{align*}
By construction we have $\beta_{r+N}=\beta_r-(x^\vee,\beta_r)\delta$. 
For any $s\ge1$ the following hold \cite{Be2}:
\begin{align}
&\{\beta_r\mid 1\le r\le sN\}
=\{m\delta-\alpha\mid \alpha\in \Delta_+\,,\
1\le m\le s(x^\vee,\alpha)\} \,,
\label{real-rootv1}\\
&\{\beta_r\mid 0\ge r\ge -sN+1\}
=\{m\delta+\alpha\mid \alpha\in \Delta_+\,,\
0\le m\le s(x^\vee,\alpha)-1\} \,.
\label{real-rootv2}
\end{align}
In particular, $\{\beta_r\}_{r\in \Z}$ 
coincides with the set of positive real roots of $\g$. 

The root vectors are defined 
for positive real roots by
\begin{align}
&e_{\beta_r}
=\begin{cases}
T_{i_1}\cdots T_{i_{r-1}} e_{i_r}& (r\ge1)\,,\\
T_{i_0}\cdots T_{i_{r+1}} e_{i_r}& (r\le0)\,,\\
\end{cases}
\label{rootv-1}
\\
&f_{\beta_r}=\Omega e_{\beta_r}\,. 
\label{rootv-2}
\end{align}
Positive imaginary roots have multiplicities. Labeling them  
as $(m\delta,i)$ ($m>0$, $i\in I$) we set 
\begin{align}
&e_{(m\delta,i)}=q_i^{-2}e_ie_{m\delta-\alpha_i}
-e_{m\delta-\alpha_i}e_i\quad (m>0)\,,\quad
f_{(m\delta,i)}=\Omega e_{(m\delta,i)}\,.
\label{rootv-3}
\end{align}
The root vectors are related to the Drinfeld generators by
\begin{align}
&x^+_{i,m}=o(i)^m 
\times\begin{cases}
e_{m\delta+\alpha_i}& (m\ge0),\\
-f_{-m\delta-\alpha_i}k_i^{-1}& (m<0),\\
\end{cases}
\label{x+Dri}\\
&\phi^+_{i,m}=-o(i)^m (q_i-q_i^{-1})k_i e_{(m\delta,i)}\ (m>0)\,,
\label{phiDri}\\
&x^-_{i,m}=\Omega x^+_{i,-m}\,,
\quad \phi^-_{i,r}=\Omega \phi^+_{i,-r}\,.
\label{x-Dri}
\end{align}
Here $o(i)=\pm1$ is chosen 
in such a way that $o(i)o(j)=-1$ holds for $C_{i,j}<0$. 
The Borel subalgebras are generated by root vectors, 
\begin{align*}
&U^+_q\bo=\langle e_{m\delta+\alpha}\mid m\ge0,\ \alpha\in\Delta^+ \rangle\,,
\\
&U^-_q\bo=\langle k_{\alpha}
e_{m\delta-\alpha}\mid m>0,\ \alpha\in\Delta^+ \rangle
\,.
\end{align*}

Define a total order $\prec$ on the set of positive roots as follows. 
For real roots we set $\beta_r\prec\beta_{s}$ if $r>s>0$ or $s<r\le 0$. 
Positive imaginary roots 
are mutually ordered in an arbitrary way. We set further
$\beta_s\prec (m\delta,i)\prec \beta_{r}$ for any $m>0$, $r>0\ge s$.
Altogether we have
\begin{align}
\beta_0\prec\beta_{-1}\prec\beta_{-2}\prec\cdots\prec
(m\delta,i)\prec \cdots
\prec\beta_3\prec\beta_2\prec\beta_1\,. 
\label{root-order}
\end{align}
The root vectors satisfy the convexity property \cite{Be2}
\begin{align}
e_\beta e_\alpha -q^{(\alpha,\beta)}e_\alpha e_\beta 
=\sum_{\{\gamma_i\},\{n_i\}}a^{\{n_i\}}_{\{\gamma_i\}}
e_{\gamma_1}^{n_1}\cdots e_{\gamma_m}^{n_m}\,,
\label{convex}
\end{align}
where $a^{\{n_i\}}_{\{\gamma_i\}}\in\C(q)$, and 
the sum is taken over $\gamma_i$ and $n_i\in\Z_{>0}$ such that
$\alpha\prec\gamma_1\prec\cdots\prec\gamma_m\prec\beta$, 
$\sum_{i}n_i\gamma_i=\alpha+\beta$. 
If $\alpha=\beta_r$, $\beta=\beta_s$ and  $r,s>0$ or $r,s\le0$, then the coefficients are Laurent polynomials of $q$
\cite{BCP}. If in addition $\alpha+\beta$ is a root, then $e_{\alpha+\beta}$ appears with non-zero coefficient.
\begin{lem}\label{lem:root-vec}
For any $\alpha\in \Delta^+$ and $l\ge1$, there exists
a $p_0\ge1$ such that if $p\ge p_0$ then
$e_{p\delta\pm\alpha}$ belongs to the algebra 
generated by $x^\pm_{i,m}$ and $k_i^{\pm1}$
($i\in I$, $m\ge l$). 
\end{lem}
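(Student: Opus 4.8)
The plan is to prove the statement by induction on $\mathrm{ht}(\alpha)$, writing $\mathcal A_l$ for the subalgebra of $U_q\g$ generated by $\{x^\pm_{i,m}\mid i\in I,\ m\ge l\}$ together with the $k_i^{\pm1}$. For $\mathrm{ht}(\alpha)=1$, i.e. $\alpha=\alpha_i$, equation \eqref{x+Dri} gives $e_{p\delta+\alpha_i}=o(i)^{p}x^+_{i,p}$, while combining \eqref{x+Dri} with \eqref{x-Dri} (and using $\Omega^2=\mathrm{id}$, $\Omega f_\beta=e_\beta$) gives $e_{p\delta-\alpha_i}=-o(i)^{p}k_i^{-1}x^-_{i,p}$; both lie in $\mathcal A_l$ as soon as $p\ge l$, so $p_0=l$ works in the base case.

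For the induction step let $\mathrm{ht}(\alpha)=h\ge2$ and pick a simple root $\alpha_i$ with $(\alpha,\alpha_i)>0$, so that $\alpha'=\alpha-\alpha_i\in\Delta^+$ has height $h-1$ and $\alpha_i\ne\alpha'$. Choose $p_1\ge l$ to be a single bound realizing the induction hypothesis simultaneously for all $\mu\in\Delta^+$ with $\mathrm{ht}(\mu)<h$ (a finite set). Given $p$, split $p=k_1+k_2$ with $k_1,k_2$ large (how large is fixed below) and consider the two distinct positive real roots $k_1\delta+\alpha_i$ and $k_2\delta+\alpha'$ (when proving the claim for $e_{p\delta+\alpha}$), respectively $k_1\delta-\alpha_i$ and $k_2\delta-\alpha'$ (for $e_{p\delta-\alpha}$); relabel them $\beta_r\prec\beta_s$ in the order \eqref{total-order}. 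In the $+$ case both have index $\le0$ and in the $-$ case both have index $\ge1$, so in either case they lie on the same side of the imaginary roots and every root $\gamma$ with $\beta_r\prec\gamma\prec\beta_s$ is again real of the same $\pm$ type — in particular no imaginary root vectors intervene. The convexity relation \eqref{convex} applied to the pair $(\beta_r,\beta_s)$, in which $e_{\beta_r+\beta_s}=e_{p\delta\pm\alpha}$ occurs with a nonzero coefficient (the assertion following \eqref{convex}, valid here since $r,s$ lie on the same side; we work over $\C(q)$ as in this appendix), can then be solved for $e_{p\delta\pm\alpha}$, expressing it as a $\C(q)$-linear combination of $e_{\beta_r}e_{\beta_s}$, $e_{\beta_s}e_{\beta_r}$, and monomials $e_{\gamma_1}^{n_1}\cdots e_{\gamma_m}^{n_m}$ with $\beta_r\prec\gamma_1\prec\cdots\prec\gamma_m\prec\beta_s$, $\gamma_t=m_t\delta\pm\mu_t$, $\mu_t\in\Delta^+$, $\sum_t n_t\mu_t=\alpha$, $\sum_t n_tm_t=p$.

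Two points make the induction go through. First, $\sum_t n_t\mu_t=\alpha$ with all $\mu_t\in\Delta^+$ forces that if some $\mu_t$ equals $\alpha$ then it is the only root present and $n_t=1$ — that term is precisely the already isolated $e_{p\delta\pm\alpha}$; hence every $\mu_t$ appearing in the remaining monomials has $\mathrm{ht}(\mu_t)<h$, and the a priori dangerous terms $e_{m\delta\pm\alpha}$ with $m<p$ never occur. Second, and this is the technical heart, for $k_1,k_2$ large every $\gamma_t=m_t\delta\pm\mu_t$ that occurs has $m_t\ge p_1$. This follows from two elementary facts about the convex order: (a) for each $M$ only finitely many positive real roots have $\delta$-component smaller than $M$, so there is an index threshold past which every $\beta_r$ has $\delta$-component $\ge M$ (immediate from \eqref{real-rootv1}, \eqref{real-rootv2}); and (b) by $\beta_{r+N}=\beta_r-(x^\vee,\beta_r)\delta$ the index of $k\delta\pm\alpha_i$ and of $k\delta\pm\alpha'$ tends to $\mp\infty$ as $k\to\infty$. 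Choosing $k_1,k_2$ large enough that both $\beta_r$ and $\beta_s$ lie past the threshold from (a) with $M=p_1$, and also $k_1,k_2\ge p_1$, every intervening $\gamma_t$ then has $\delta$-component $\ge p_1$.

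Consequently $e_{\beta_r}=e_{k_1\delta\pm\alpha_i}$, $e_{\beta_s}=e_{k_2\delta\pm\alpha'}$ and every $e_{\gamma_t}=e_{m_t\delta\pm\mu_t}$ have $\delta$-component $\ge p_1$ and finite part of height $<h$, hence all lie in $\mathcal A_l$ by the induction hypothesis; therefore so does $e_{p\delta\pm\alpha}$, and it suffices to take $p_0$ equal to twice the lower bound on $k_1,k_2$ just obtained. I expect the main obstacle to be exactly this technical heart — the combination of facts (a) and (b), i.e. making quantitative the compatibility of the convex order \eqref{total-order} with the $\delta$-grading so that the auxiliary root vectors produced by \eqref{convex} always have homogeneous degree large enough to fall under the inductive hypothesis; the weight-grading argument ruling out circularity and the base-case dictionary \eqref{x+Dri}–\eqref{x-Dri} are routine.
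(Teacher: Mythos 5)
Your proof is correct and fleshes out exactly the argument the paper's one-line proof points to: induction on height using the convexity relation \eqref{convex} together with the root enumerations \eqref{real-rootv1}, \eqref{real-rootv2}. The two delicate points you isolate — that the intervening roots $\gamma_t$ in \eqref{convex} lie on the same side of the imaginary roots as $\beta_r,\beta_s$ (so the weight constraint $\sum n_t\mu_t=\alpha$ forces $\mathrm{ht}(\mu_t)<h$ and rules out circularity), and that pushing $k_1,k_2$ past an index threshold makes every such $\gamma_t$ have $\delta$-component $\ge p_1$ — are precisely what makes the paper's sketch go through.
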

\begin{proof}
This can be verified 
by induction on the height of $\alpha$, 
using \eqref{real-rootv1}, \eqref{real-rootv2}, and
\eqref{convex}.
\end{proof}

The following can be extracted from \cite{Da}, Theorem 4:
\begin{lem}\label{lem:copro}
We have
\begin{align*}
\Delta(x^+_{i,k})=x^+_{i,k}\otimes 1+k_i\otimes x^+_{i,k}
+\sum_{j}a_j\otimes b_j\,,
\end{align*}
where $a_j\in U_q\bo$, $b_j\in U^+_q\bo$ 
are such that 
$\wt\, b_j>0$, $\wt\, a_j+\wt\, b_j=\alpha_i$.
Similarly
\begin{align*}
\Delta(x^-_{i,k})=x^-_{i,k}\otimes k_i^{-1}+1\otimes x^-_{i,k}
+\sum_{j}a'_j\otimes b'_j\,,
\end{align*}
where $a'_j\in U^-_q\bo$, $b'_j\in U_q\bo$ 
are such that $\wt a'_j<0$,
$\wt a'_j+\wt b'_j=-\alpha_i$.
\end{lem}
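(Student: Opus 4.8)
The plan is to read the statement off Damiani's explicit formulas for the coproduct of the Drinfeld generators (\cite{Da}, Theorem 4), combined with Beck's realization \cite{Be} of those generators as root vectors, recalled above. What must be checked is that the three features we record are visible in that formula: the two displayed leading terms, the memberships $a_j\in U_q\bo$, $b_j\in U^+_q\bo$ (resp.\ $a'_j\in U^-_q\bo$, $b'_j\in U_q\bo$), and the weight conditions.

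The weight bookkeeping is automatic. The map $\Delta$ is a homomorphism of $Q\times\Z$-graded algebras, as one checks on generators (e.g.\ $\deg(e_0\otimes1)=\deg(k_0\otimes e_0)=(\accentset{\circ}{\alpha}_0,1)=\deg e_0$). Since $x^+_{i,k}$ is homogeneous of degree $(\alpha_i,k)$, every term $a_j\otimes b_j$ of $\Delta(x^+_{i,k})$ satisfies $\wt a_j+\wt b_j=\alpha_i$, and similarly $\wt a'_j+\wt b'_j=-\alpha_i$ for $x^-_{i,k}$. Moreover a nonzero weight-homogeneous element of $U^+_q\bo=\langle x^+_{j,m}\mid j\in I,\ m\ge0\rangle$ has weight in $Q^+$ and is a scalar precisely when its weight is $0$; dually $U^-_q\bo=\langle k_\alpha e_{m\delta-\alpha}\mid m>0,\ \alpha\in\Delta^+\rangle$ consists of scalars together with elements of weight in $-Q^+\setminus\{0\}$. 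Hence, once the membership $b_j\in U^+_q\bo$ (resp.\ $a'_j\in U^-_q\bo$) is granted, the condition $\wt b_j>0$ (resp.\ $\wt a'_j<0$) just says that the only summand of $\Delta(x^+_{i,k})$ with scalar right factor is $x^+_{i,k}\otimes1$ (resp.\ the only summand of $\Delta(x^-_{i,k})$ with scalar left factor is $1\otimes x^-_{i,k}$), which is the leading part of Damiani's formula.

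The substantive content is therefore the Borel memberships. For $k\ge0$ one has $x^+_{i,k}=o(i)^k e_{k\delta+\alpha_i}\in U^+_q\bo\subset U_q\bo$, and since $U_q\bo$ is a Hopf subalgebra this already gives $\Delta(x^+_{i,k})\in U_q\bo\otimes U_q\bo$, i.e.\ $a_j,b_j\in U_q\bo$. The refinement $b_j\in U^+_q\bo$ — that in each correction term the right tensor factor is a monomial in the root vectors $e_{m\delta+\beta}$ ($m\ge0$, $\beta\in\Delta^+$), involving no $\phi^+_j$-coefficient, no $k_j^{\pm1}$, and no $e_{m\delta-\beta}$ or imaginary root vector — is precisely the shape of the formula of \cite{Da}, in which, relative to the triangular decomposition \eqref{triandecomp}, all the ``impurity'' of $\Delta$ is carried by the left tensor factor. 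The second assertion, for $\Delta(x^-_{i,k})$, is the companion formula of \cite{Da}, Theorem 4, with the two tensor factors interchanged and $U^+_q\bo$ replaced by $U^-_q\bo$. I expect the sole nontrivial step to be this bookkeeping: isolating from \cite{Da} the precise list of generators that may occur in the relevant tensor factor of the correction terms; once that is done, the Borel memberships and hence the weight conditions are immediate. One can also sidestep the appeal to \cite{Da} by reproving the needed form directly, by induction on the length of the reduced word defining $e_{k\delta+\alpha_i}$ and tracking how $\Delta$ transforms under the Lusztig automorphisms $T_{i_1},\dots,T_{i_{r-1}}$, using the convexity property \eqref{convex} to control the resulting root-vector monomials — this is self-contained but appreciably longer.
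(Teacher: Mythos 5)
Correct, and in essence the same route as the paper: the paper offers no proof for Lemma~\ref{lem:copro}, only the prefacing sentence ``can be extracted from \cite{Da}, Theorem 4'', and your proposal reads the statement off that theorem in exactly the way the authors intend. You have also correctly isolated which parts are automatic (the weight conservation, since $\Delta$ respects the $Q\times\Z$-grading, and the equivalence of $\wt b_j>0$ with ``$b_j$ is not a scalar'' once $b_j\in U^+_q\bo$ is known) and which part is the actual content extracted from Damiani (the one-sided Borel membership $b_j\in U^+_q\bo$, resp.\ $a'_j\in U^-_q\bo$, in the correction terms). One small point worth making explicit, which neither you nor the paper state: the lemma is only meaningful (and only used) for $k\ge 0$ in the $x^+$ case and $k>0$ in the $x^-$ case, i.e.\ precisely when $x^\pm_{i,k}\in U_q\bo$; your phrase ``for $k\ge0$'' implicitly assumes this, but it should be flagged since Damiani's companion formula for $x^-_{i,r}$ with $r\le0$ produces $\phi^-_{i,\bullet}$'s that are not in $U_q\bo$.
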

\bigskip

{\bf Acknowledgments.}
MJ wishes to thank David Hernandez for stimulating discussions. 

The research of BL is supported by 
the Russian Science Foundation grant project 16-11-10316. 
MJ is partially supported by 
JSPS KAKENHI Grant Number JP16K05183. 
EM is partially supported by a grant from the Simons Foundation  
\#353831.

EM and BF would like to thank Kyoto University
for hospitality during their visits when this work was started. 
EM is grateful to Masaki Kashiwara for supporting his visit.

\end{document}